\documentclass[11pt, reqno]{amsart}
\usepackage{amssymb}
\usepackage{bbm}
\usepackage{amsthm}
\usepackage{nccmath}
\usepackage[bookmarks=false]{hyperref}
\usepackage{etoolbox}
\usepackage{array}
\usepackage{xparse}
\usepackage{enumitem}
\usepackage{tikz}
\usepackage{mathtools}
\usepackage[font=scriptsize]{caption}

\hfuzz=3pt 

\setlength{\textwidth}{\paperwidth}
\addtolength{\textwidth}{-6.4cm}
\calclayout


\newtheorem{thm}{Theorem}[section]

\newtheorem{lemma}[thm]{Lemma}
\newtheorem{prop}[thm]{Proposition}

\theoremstyle{definition}
\newtheorem{definition}[thm]{Definition}

\theoremstyle{remark}

\newtheorem{rmk}[thm]{Remark}



\newcommand{\thmref}[1]{Theorem~\ref{#1}}

\newcommand{\secref}[1]{\S\ref{#1}}
\newcommand{\lemref}[1]{Lemma~\ref{#1}} 
\newcommand{\propref}[1]{Proposition~\ref{#1}}

\newcommand*{\one}{\mathbbm{1}}

\newcommand*{\qq}{\qquad}

\newcommand*{\tx}[1]{\text{#1}}

\newcommand*{\ep}{\epsilon}

\newcommand*{\suchthat}{\, \middle| \,}


\newcommand*{\myoverline}[3]{\mkern -#1mu\overline{\mkern#1mu#3\mkern#2mu}\mkern -#2mu}	

\newcommand*{\zbar}{\myoverline{-2}{0}{\z}}

\newcommand*{\Omegabar}{\myoverline{0}{0}{\Omega} }

\newcommand*{\Psibar}{\myoverline{0}{0}{\Psi}}
\newcommand*{\Psizbar}{\myoverline{0}{0}{\Psi_z}}

\newcommand*{\sbar}{\myoverline{0}{0}{s}}

\newcommand*{\ybar}{\myoverline{0}{0}{y}}
\newcommand*{\zonebar}{\myoverline{0}{0}{\zone}}
\newcommand*{\ztwobar}{\myoverline{0}{0}{\ztwo}}
\newcommand*{\Xbar}{\myoverline{0}{0}{X}}

\newcommand*{\Xonebar}{\myoverline{0}{0}{X_1}}
\newcommand*{\Xtwobar}{\myoverline{0}{0}{X_2}}

\newcommand*{\Hspbar}{\myoverline{1}{1}{\Hsp}}

\newcommand*{\Omegaplusbar}{\myoverline{1}{2}{\Omega}_{+}}
\newcommand*{\Lambdabar}{\myoverline{0}{0}{\Lambda}}


\newcommand*{\half}{\frac{1}{2}}

\newcommand*{\Rsp}{\mathbb{R}}
\newcommand*{\Csp}{\mathbb{C}}
\newcommand*{\Nsp}{\mathbb{N}}

\newcommand*{\Dsp}{\mathbb{D}}

\newcommand*{\Hsp}{\mathbb{H}}
\newcommand*{\Hspplus}{\Hsp_{+}}

\newcommand*{\Omegaplus}{\Omega_{+}}

\newcommand*{\Lone}{L^1}
\newcommand*{\Ltwo}{L^2}

\newcommand*{\Linfty}{L^{\infty}}


\DeclareMathOperator*{\esssup}{ess\,sup}

\DeclareMathOperator*{\supp}{supp}
\newcommand*{\diff}{\mathop{}\! d}

\newcommand*{\compose}[1]{\circ{#1}}

\newcommand*{\Imag}{\tx{Im}}
\newcommand*{\Real}{\tx{Re}}

\newcommand*{\grad}{\nabla}

\newcommand*{\pt}{\partial_t}



\newcommand*{\btil}{\widetilde{b}}

\newcommand*{\w}{\omega}
\newcommand*{\wtil}{\widetilde{\omega}}

\newcommand*{\Psiz}{\Psi_z}

\newcommand*{\z}{z}

\newcommand*{\zone}{z_1}
\newcommand*{\ztwo}{z_2}


\newcommand*{\nobrac}[1]{ #1 }

\DeclarePairedDelimiter{\oldbrac}{\lparen}{\rparen}			
\NewDocumentCommand{\brac}{ s o m }{						
	\IfBooleanT{#1}{
  		\IfValueT{#2}{\oldbrac[#2]{#3}}
		\IfValueF{#2}{\oldbrac{#3}} 
	}
	\IfBooleanF{#1}{
  		\IfValueT{#2}{\PackageError{mypackage}{Incorrect use of brac. Insert star}{}}
		\IfValueF{#2}{\oldbrac*{#3}} 
	}		
}

\DeclarePairedDelimiter\oldcbrac{\lbrace}{\rbrace}				
\NewDocumentCommand{\cbrac}{ s o m }{					
	\IfBooleanT{#1}{
  		\IfValueT{#2}{\oldcbrac[#2]{#3}}
		\IfValueF{#2}{\oldcbrac{#3}} 
	}
	\IfBooleanF{#1}{
  		\IfValueT{#2}{\PackageError{mypackage}{Incorrect use of cbrac. Insert star}{}}
		\IfValueF{#2}{\oldcbrac*{#3}} 
	}		
}

\DeclarePairedDelimiter\oldsqbrac{\lbrack}{\rbrack}				
\NewDocumentCommand{\sqbrac}{ s o m }{					
	\IfBooleanT{#1}{
  		\IfValueT{#2}{\oldsqbrac[#2]{#3}}
		\IfValueF{#2}{\oldsqbrac{#3}} 
	}
	\IfBooleanF{#1}{
  		\IfValueT{#2}{\PackageError{mypackage}{Incorrect use of sqbrac. Insert star}{}}
		\IfValueF{#2}{\oldsqbrac*{#3}} 
	}		
}

\DeclarePairedDelimiter{\oldabs}{\lvert}{\rvert}
\NewDocumentCommand{\abs}{ s o m }{						
	\IfBooleanT{#1}{
  		\IfValueT{#2}{\oldabs[#2]{#3}}
		\IfValueF{#2}{\oldabs{#3}} 
	}
	\IfBooleanF{#1}{
  		\IfValueT{#2}{\PackageError{mypackage}{Incorrect use of abs. Insert star}{}}
		\IfValueF{#2}{\oldabs*{#3}} 
	}		
}

\DeclarePairedDelimiterX{\oldnorm}[1]{\lVert}{\rVert}{#1}
\NewDocumentCommand{\norm}{ s o o m }{					
	\IfValueT{#2} {
		\IfBooleanT{#1}{
  			\IfValueT{#3}{\oldnorm[#2]{#4}_{#3}}
			\IfValueF{#3}{\oldnorm{#4}_{#2}} 
		}
		\IfBooleanF{#1}{
  			\IfValueT{#3}{\PackageError{mypackage}{Incorrect use of norm. Insert star}{}}
			\IfValueF{#3}{\oldnorm*{#4}_{#2}} 
		}
	}
	\IfValueF{#2} {
		\IfBooleanT{#1}{\oldnorm{#4}}	
		\IfBooleanF{#1}{\oldnorm*{#4}}		
	}	
}

\makeatletter
\def\black@#1{%
    \noalign{%
        \ifdim#1>\displaywidth
            \dimen@\prevdepth
            \nointerlineskip
            \vskip-\ht\strutbox@
            \vskip-\dp\strutbox@
            \vbox{\noindent\hbox to \displaywidth{\hbox to#1{\strut@\hfill}}}%
            \prevdepth\dimen@
        \fi
    }%
}
\makeatother

\makeatletter
\setcounter{tocdepth}{2}
\renewcommand{\tocsection}[3]{%
  \indentlabel{\@ifnotempty{#2}{\bfseries\ignorespaces#1 #2\quad}}\bfseries#3}
\renewcommand{\tocsubsection}[3]{%
  \indentlabel{\@ifnotempty{#2}{\ignorespaces#1 #2\quad}}#3}

\newcommand\@dotsep{4.5}
\def\@tocline#1#2#3#4#5#6#7{\relax
  \ifnum #1>\c@tocdepth 
  \else
    \par \addpenalty\@secpenalty\addvspace{#2}%
    \begingroup \hyphenpenalty\@M
    \@ifempty{#4}{%
      \@tempdima\csname r@tocindent\number#1\endcsname\relax
    }{%
      \@tempdima#4\relax
    }%
    \parindent\z@ \leftskip#3\relax \advance\leftskip\@tempdima\relax
    \rightskip\@pnumwidth plus1em \parfillskip-\@pnumwidth
    #5\leavevmode\hskip-\@tempdima{#6}\nobreak
    \leaders\hbox{$\m@th\mkern \@dotsep mu\hbox{.}\mkern \@dotsep mu$}\hfill
    \nobreak
    \hbox to\@pnumwidth{\@tocpagenum{\ifnum#1=1\bfseries\fi#7}}\par
    \nobreak
    \endgroup
  \fi}
\AtBeginDocument{%
\expandafter\renewcommand\csname r@tocindent0\endcsname{0pt}
}
\def\l@subsection{\@tocline{2}{0pt}{2.5pc}{5pc}{}}
\makeatother

\makeatletter
 \def\@testdef #1#2#3{%
   \def\reserved@a{#3}\expandafter \ifx \csname #1@#2\endcsname
  \reserved@a  \else
 \typeout{^^Jlabel #2 changed:^^J%
 \meaning\reserved@a^^J%
 \expandafter\meaning\csname #1@#2\endcsname^^J}%
 \@tempswatrue \fi}
\makeatother

\makeatletter
\newcommand*{\rom}[1]{\expandafter\@slowromancap\romannumeral #1@}
\makeatother

\makeatletter
\patchcmd{\@sect}{\@addpunct.}{}{}{}
\patchcmd{\subsection}{-.5em}{1em}{}{}
\makeatother

\begin{document}

\title[2D Euler uniqueness]{Uniqueness of the 2D Euler equation on a corner domain with non-constant vorticity around the corner}
\author[Siddhant Agrawal]{Siddhant Agrawal$^1$}
\address{$^1$ 
Instituto de Ciencias Matem\'aticas, ICMAT, Madrid, Spain}
\email{siddhant.govardhan@icmat.es}

\author[Andrea R. Nahmod]{Andrea R. Nahmod$^2$}
\address{$^2$ 
Department of Mathematics,  University of Massachusetts,  Amherst MA 01003}
\email{nahmod@math.umass.edu}
\thanks{$^2$ {A.N. is funded in part by NSF DMS-1800852 and the Simons Foundation Collaborations Grant on Wave Turbulence (Nahmod's Award ID 651469).}} 


\begin{abstract}
We consider the 2D incompressible Euler equation on a corner domain $\Omega$ with angle $\nu\pi$ with $\half<\nu<1$. We prove that if the initial vorticity $\w_0 \in \Lone(\Omega)\cap\Linfty(\Omega)$ and if $\w_0$ is non-negative and supported on one side of the angle bisector of the domain, then the weak solutions are unique. This is the first result which proves uniqueness when the velocity is far from Lipschitz and the initial vorticity is non-constant around the boundary. 
\end{abstract}

\maketitle
\section{Introduction}

We are interested in studying incompressible fluids in two dimensions. The prototypical equation describing inviscid incompressible  flows is the Euler equation. With a view towards practical applications, we will be interested in studying the Euler equation on corner domains and more specifically the uniqueness problem for this equation. The uniqueness problem for the 2D Euler equation on obtuse angled corners  is wide open due to the fact that the velocity is not log-Lipschitz in these domains. In this paper we prove the first uniqueness result in such domains when the vorticity is non-constant around the corner, which is the main source of difficulty in proving uniqueness. To prove this result, we introduce a novel energy functional with a time dependent weight along with control of the particle trajectories of the flow. 

The 2D incompressible Euler equation on a domain $\Lambda$ is given by
\begin{align}\label{eq:Euler}
\begin{split}
u_t + (u\cdot\grad)u = -\grad P \quad \tx{ in } \Lambda, \\
\grad\cdot u = 0 \quad \tx{ in } \Lambda, \\
u\cdot n = 0 \quad \tx{ on } \partial \Lambda.
\end{split}
\end{align}
Here $u$ is the velocity, $P$ is the pressure and $n$ is the outward unit normal. The vorticity is $\omega = \grad \times u = \partial_{x_1}u_2 - \partial_{x_2}u_1$ and satisfies the transport equation
\begin{align}\label{eq:vorticity}
\omega_t + u\cdot\grad \omega = 0 \quad \tx{ in } \Lambda.
\end{align}
One can recover the velocity from the vorticity by the Biot-Savart law $u = \grad^\perp \Delta^{-1} \w$ where $\Delta$ is the Dirichlet Laplacian and $\grad^\perp  =  (-\partial_{x_2}, \partial_{x_1})$. The 2D Euler equation has several conserved quantities, chief among them being $\norm[L^p(\Lambda)]{\w(\cdot,t)}$ for any $1\leq p\leq \infty$. This is used in an essential way to prove any kind of global well-posedness result.

The study of the well-posedness problem for the 2D Euler equation has a long history. There are two important considerations to keep in mind while talking about the well-posedness problem: one is the regularity of the initial vorticity and the other is the regularity of the boundary. Let us first consider the case of both the vorticity and boundary being regular enough. Global well-posedness for strong solutions in smooth domains was proved by Wolibner \cite{Wo33} and H\"older \cite{Ho33} (see also \cite{Mc67,Ki83}). One of the most important works in the well-posedness theory is the work of Yudovich \cite{Yu63} who established global well-posedness for weak solutions on smooth domains for initial data $\w_0 \in \Lone(\Lambda)\cap\Linfty(\Lambda)$ (see also \cite{Ba72,Te75}). The uniqueness result of Yudovich used the Eulerian formulation and relied on the Calder\'on Zygmund inequalities
\begin{align}\label{eq:CZineq}
\norm[L^p(\Lambda)]{\grad u(\cdot,t)} \leq Cp\norm[L^p(\Lambda)]{\w(\cdot,t)} \qq \tx{ for all } p\in [2,\infty).
\end{align}
Later on Marchioro and Pulvirenti \cite{MaPu94} gave a different proof of uniqueness by using the Lagrangian formulation which relied on the log-Lipschitz nature of the velocity
\begin{align}\label{eq:logLipest}
\sup_{x,y \in \Lambda} \frac{\abs{u(x,t) - u(y,t)}}{\abs{x-y}\max\cbrac{-\ln\abs{x-y}, 1}} \leq C\norm[\Lone(\Lambda)\cap\Linfty(\Lambda)]{\w(\cdot,t)}.
\end{align}
These estimates hold for $C^{1,1}$ domains but may not hold for less regular domains (see \cite{JeKe95}). 

For the case of initial vorticity being less regular, global existence of weak solutions was proved by DiPerna and Majda \cite{DiMa87} for $\w_0 \in \Lone(\Rsp^2)\cap L^p(\Rsp^2)$ for $p>1$ and by Delort \cite{De91} for $\w_0 \in H^{-1}(\Rsp^2)\cap \mathcal{M}_+(\Rsp^2)$ (here $\mathcal{M}_+$ is the space of positive Radon measures). Uniqueness is not expected in general in this case and this is a major open problem (see the works \cite{Vi18a,Vi18b, BrSh21,BrMu20}).

For the case of boundary being less regular, global existence of weak solutions for bounded convex domains was proved by Taylor \cite{Ta00} and for arbitrary simply connected bounded domains (and exterior domains) was proved by Gerard-Varet and Lacave \cite{GeLa13,GeLa15}. Both results prove existence for initial vorticity $\w_0 \in \Lone(\Lambda)\cap L^p(\Lambda)$ or $\w_0 \in H^{-1}(\Lambda)\cap \mathcal{M}_+(\Lambda)$. However even for $\w_0 \in \Lone(\Lambda)\cap \Linfty(\Lambda)$ the question of uniqueness is a major open problem. It is important to note that if the domain is less regular, then the uniqueness question does not become simpler even if the initial vorticity is assumed to be smooth, as the regularity of the vorticity can be destroyed at a later time (see \cite{KiZl15, AlCrMa19}).

There have been some recent works that establish uniqueness for rough domains with initial vorticity $\w_0 \in \Lone(\Lambda)\cap\Linfty(\Lambda)$. One strategy used was to identify domains rougher than $C^{1,1}$ which satisfy either \eqref{eq:CZineq} or \eqref{eq:logLipest} and use this to prove uniqueness. This was first achieved by Bardos, Di Plinio and Temam \cite{BaDiTe13} for rectangle domains and for $C^2$ domains which allow corners of angle $\pi/m$ for $m\in \Nsp, m\geq 2$. Later Lacave, Miot and Wang \cite{LaMiWa14} proved uniqueness for $C^{2,\alpha}$ domains with a finite number of acute angled corners, and then Di Plinio and Temam \cite{DiTe15} proved uniqueness for $C^{1,1}$ domains with finitely many acute angled corners. Note that for angles bigger than $\pi/2$, the estimates \eqref{eq:CZineq} and \eqref{eq:logLipest} fail to hold and uniqueness is open in general. 
Another strategy used to prove uniqueness is to prove it for initial vorticity which is constant around the boundary. The idea behind this strategy is that if the vorticity is constant around the boundary, then the uniqueness proof of \cite{MaPu94} works, as in this case one only needs the estimate \eqref{eq:logLipest} for $x,y \in K$ where $K\subset \Lambda$ is a compact set outside of which the vorticity is constant. The strategy thus reduces to showing that if the vorticity is initially constant around the boundary, then it remains constant for later times. Lacave \cite{La15} proved that if the domain is $C^{1,1}$ with finitely many corners with angles greater than $\pi/2$ and $\w_0$ is constant around the boundary and has a definite sign, then $\w$ remains constant around the boundary for all time and the weak solutions are unique.  Lacave and Zlato\v{s} \cite{LaZl19}  proved the same result removing the restriction of definite sign on $\w_0$ but keeping the initial vorticity constant around the boundary and the corners are now only allowed to be in $(0,\pi)$.  Recently Han and Zlato\v{s} \cite{HaZl21} generalized the results of \cite{La15,LaZl19}, by proving uniqueness in more general domains which include convex domains, but for initial vorticity which is still constant around the boundary.

In this paper we consider the uniqueness question for a domain which does not satisfy \eqref{eq:CZineq} or \eqref{eq:logLipest} and has non-constant initial vorticity around the boundary. In this case, the methods used previously to prove uniqueness cannot work and new ideas are needed. To state our result fix $\half<\nu<1$ and let $\Omega $ and $\Omegaplus$ be the domains 
\begin{align*}
\Omega = \cbrac{re^{i\theta} \in \Csp \suchthat r>0 \tx{ and } 0<\theta<\nu\pi} \qq \Omegaplus = \cbrac{re^{i\theta} \in \Csp \suchthat r>0 \tx{ and } 0<\theta<\frac{\nu\pi}{2}}.
\end{align*}
Let the initial vorticity be $\w_0 := \w(\cdot,0)$.  We assume that the initial vorticity satisfies 
\begin{align}\label{assump}
\w_0 \in \Lone(\Omega) \cap \Linfty(\Omega) \tx{ along with } \supp\brac{\w_0} \subset \Omegaplusbar \tx{ and } \w_0 \geq 0.
\end{align}
We can now state our main result.
\begin{thm}\label{thm:main}
Consider the Euler equation in $\Omega$ with initial vorticity $\w_0$ satisfying \eqref{assump}. Then there exists a unique Yudovich weak solution in the time interval $[0,\infty)$ with this initial data. 
\end{thm}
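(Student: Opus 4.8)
The plan is to run a \emph{Lagrangian} comparison between two Yudovich solutions $\w_1,\w_2$ sharing the initial data $\w_0$, and to control the discrepancy of their flow maps through a weighted energy whose weight is allowed to depend on time and to degenerate precisely at the corner. The classical arguments of Yudovich and of Marchioro--Pulvirenti fail here because, for $\half<\nu<1$, the Biot--Savart velocity is only H\"older of order $\frac1\nu-1<1$ at the vertex and is neither log-Lipschitz nor subject to \eqref{eq:CZineq}; the key is to exploit the sign and support hypotheses \eqref{assump} to keep the vorticity away from the vertex, and to build that information into the energy weight.

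First I would set up precise velocity estimates via the conformal map $\Phi(z)=z^{1/\nu}$, which sends $\Omega$ onto the upper half-plane $\Hsp$ and yields an explicit Green's function and Biot--Savart kernel. From these one extracts two facts: a global bound $\norm[\Linfty]{u}\lesssim \norm[\Lone(\Omega)\cap\Linfty(\Omega)]{\w}$, and a quantitative modulus of continuity for $u$ that is log-Lipschitz away from the vertex and degenerates there only like a fixed fractional power of the distance to $0$. Crucially, near the vertex the stream function $\psi$ (solving $\Delta\psi=\w$ with $\psi=0$ on $\partial\Omega$) is governed by its leading mode $\psi\approx c(t)\,r^{1/\nu}\sin(\theta/\nu)$, and $\w\ge 0$ fixes the sign of $c(t)$ via the maximum principle. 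Writing the velocity in polar form $u_r=-\frac1r\partial_\theta\psi$ and $u_\theta=\partial_r\psi$, one checks that on $\Omegaplus$, where $\cos(\theta/\nu)>0$, the radial velocity points \emph{outward} while the angular velocity pushes $\theta$ toward the lower wall. This is the structural heart of the argument.

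The second step turns this into genuine trajectory control: for each solution individually I would show that $\supp\w(\cdot,t)\subset\Omegaplusbar$ is preserved and that the distance of any transported particle to the vertex is nondecreasing to leading order, giving a quantitative lower bound for $\abs{X(\al,t)}$ in terms of $\abs{\al}$ and $t$ that keeps the vorticity a controlled distance from $0$ (the flow is well defined and particle-unique off the vertex, which is never reached). With this in hand, the third step introduces a time-dependent weight $\rho(\al,t)$, singular at the vertex, and the energy
\begin{align*}
\Ecal(t)=\int_\Omega \rho(\al,t)\,\abs{X_1(\al,t)-X_2(\al,t)}^2\,\w_0(\al)\,\diff\al,
\end{align*}
where $\rho$ is calibrated so that the degeneracy of the velocity modulus at the vertex is exactly compensated. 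Since the trajectory control confines the support of $\w_0$ to the region where $\rho$ stays finite, we have $\Ecal(0)=0$ and $\Ecal$ remains finite.

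The final step is the energy estimate. Differentiating $\Ecal$ and using $\dot X_i=u_i(X_i,t)$, I split $u_1(X_1)-u_2(X_2)=\big(u_1(X_1)-u_1(X_2)\big)+\big(u_1(X_2)-u_2(X_2)\big)$; the first term is controlled by the weighted modulus of continuity of $u_1$ from the first step, and the second by a Biot--Savart estimate for the difference $\w_1-\w_2$ expressed through the flow maps. The goal is to close an Osgood-type inequality $\Ecal'(t)\le C\,\Psi(\Ecal(t))$ with $\int_0^1 \frac{ds}{\Psi(s)}=\infty$, whence $\Ecal\equiv 0$, so $X_1=X_2$ and $\w_1=\w_2$. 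The main obstacle is the coupling between the second and third steps: because the velocity is truly non-log-Lipschitz at the vertex, no fixed weight can close the estimate, so $\rho$ must co-evolve with the trajectory lower bound, and that lower bound must be established \emph{simultaneously and uniformly on $[0,\infty)$ for both solutions} from the sign and support structure alone, since a priori $\w_1$ and $\w_2$ differ. Reconciling this self-consistency---quantitative confinement of the trajectories for each Yudovich solution together with a matching choice of $\rho$---is where the real work lies.
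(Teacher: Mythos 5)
Your outline follows the paper's strategy in its main lines: conformal-map velocity estimates, confinement of the trajectories away from the vertex for each solution separately (coming from the sign and support hypotheses \eqref{assump}), a Lagrangian comparison energy carrying a time-dependent weight that compensates the merely H\"older modulus of $u$ at the corner, and an Osgood closure. (The paper's weight is the pure power $t^{-\alpha}$ with $\alpha=\frac{1}{2\nu-1}$; this is interchangeable with your trajectory-calibrated $\rho$, because the confinement bound $\abs{X_i(x,t)}\gtrsim t^{\frac{\nu}{2\nu-1}}$ of \propref{prop:flow} is uniform over all labels in $\supp\w_0$ near the corner.) The genuine gap is the initialization of the Osgood argument. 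You claim that ``$\Ecal(0)=0$ and $\Ecal$ remains finite'' because confinement keeps the support where $\rho$ is finite. That cannot be right: $\supp\w_0$ contains labels arbitrarily close to the vertex, and any weight calibrated to cancel the velocity degeneracy along their trajectories must blow up as $t\to0^+$ (like $t^{-\alpha}$ in the paper). Hence $\lim_{t\to0^+}\Ecal(t)$ is a $0\times\infty$ indeterminate, and the Osgood conclusion $\Ecal\equiv0$ requires precisely this limit to vanish; that is not automatic. The paper fills this hole with a separate, \emph{unweighted} estimate (\propref{prop:Eone}): $E_1(t)=\int_\Omega\abs{X_1-X_2}\abs{\w_0}\diff x$ satisfies $E_1(t)\leq Ct^{\beta}$ for any $\beta\in\brac{1,\frac{2\nu}{2\nu-1}}$, and since $\frac{2\nu}{2\nu-1}>\alpha$ one may choose $\beta>\alpha$ so that $t^{-\alpha}E_1(t)\to0$. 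This is Step 2 of the model ODE argument for \eqref{eq:sampleODE}, and it is the one idea your plan is missing; without it the weighted scheme never starts.

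Two further corrections to where you locate the difficulty. First, the ``self-consistency'' between the weight and the confinement, which you call the real work, is not an obstruction in the paper's execution: \propref{prop:flow} is proved for each Yudovich solution on its own, before any comparison, using only continuity of $b$ up to the boundary (\lemref{lem:transport}) and $b_0>0$; the weight is then a fixed deterministic function of $t$, with no co-evolution. What actually has to work out is arithmetic: differentiating the weight contributes $-\frac{\alpha}{t}E_1$, while the non-Lipschitz factor $\abs{X}^{\frac1\nu-2}$ evaluated along confined trajectories contributes at most $\frac{2(1-\nu)}{2\nu-1}\cdot\frac{b_0+\ep}{b_0-\ep}\cdot\frac{E_1}{t}$, and absorption is possible exactly because $2(1-\nu)<1$, i.e.\ because $\nu>\half$; this is where the angle restriction enters. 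Second, you should not attempt to run the weighted scheme uniformly on $[0,\infty)$ --- the $t^{\frac{\nu}{2\nu-1}}$ confinement is only short-time anyway, since it relies on $b(\cdot,t)\approx b_0$ near the corner. The paper instead proves uniqueness on a short interval $[0,T^*]$, and for $t\geq T^*$ invokes \lemref{lem:postime} (here $\w_0\geq0$ keeps the support a fixed distance $c>0$ from the corner for all later times), after which the velocity is log-Lipschitz on the relevant region and the classical Marchioro--Pulvirenti argument gives uniqueness on every $[0,T']$.
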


See \secref{sec:weak} for a precise definition of Yudovich weak solutions. This is the first result which proves uniqueness when the domain does not satisfy the estimates \eqref{eq:CZineq} or \eqref{eq:logLipest} and when the initial vorticity is non-constant around the corner. As we have non-constant vorticity around the corner, we have to use fundamentally new ideas to prove uniqueness and we explain the new approach below. 

The assumptions on the vorticity in the above theorem can be slightly relaxed. First instead of the assumption that $ \supp\brac{\w_0} \subset \Omegaplusbar$, we only need the assumption that there exists a neighborhood $U$ of $\partial \Omega$ such that $ \supp\brac{\w_0}\cap U \subset \Omegaplusbar$. The proof in this paper goes identically for this case. Furthermore by modifying the proof of \propref{prop:flow} one can establish uniqueness if $ \supp(\w_0) \subset \cbrac{re^{i\theta} \in \Csp \suchthat r\geq 0 \tx{ and } 0 \leq \theta \leq \beta(\nu)\nu\pi}$ for some $\beta(\nu)>\half$. Similarly the assumption of $\w_0 \geq 0$ can be relaxed slightly to include  negative vorticity in some places, by ensuring that a version of \propref{prop:flow} and of \lemref{lem:postime} are still satisfied. Moreover if one only cares about short time uniqueness, then the assumption of $\w_0 \geq 0$ in the theorem can be replaced by the condition $b_0 >0$, where $b_0$ is defined in \eqref{eq:bzero}. This is because the only place we really need the condition $\w_0 \geq 0$ is \lemref{lem:postime} which is useful only for long time uniqueness. 

For short time uniqueness one can also handle the case of multiple obtuse corners, if the vorticity is non-negative (or more generally if $\btil(\cdot, 0)$ is positive at each corner, see \eqref{eq:btilnew}, \eqref{eq:bzero}) and one imposes the condition that the support of the vorticity in a small ball around each obtuse corner lies on one side of the angle bisector. However global in time uniqueness in such a case would not follow in general from the arguments in this paper, as the vorticity around one obtuse corner could touch another obtuse corner in finite time. Similarly the arguments used in this paper do not imply global in time uniqueness for a bounded domain with one corner as the vorticity leaving the corner will come back to the corner in finite time.

To understand the difficulty of the problem, consider the particle trajectories in the support of the vorticity. As the velocity near the corner is very far from Lipschitz, the particles near the corner move in a manner similar to the ODE \eqref{eq:sampleODE}. It is well known that if the vorticity is non-negative then the particle trajectories near the boundary move to the right (see for example \cite{KiZl15, ItMiYo16}), however what may happen is that there could potentially be two different solutions of the Euler equation with both their Lagrangian trajectories moving to the right but at a different rate. In fact the main enemy in proving uniqueness is when the particle trajectories for the two solutions are very close to each other but are not the same. As the velocity is far from Lipschitz near the corner and the transport equation (for the vorticity) is nonlinear, all current tools available to control the distance between the trajectories are inadequate to prove uniqueness.  To the best of our knowledge the method we employ to overcome this difficulty is completely novel.

Let us now explain the main idea of the proof. Let $x(t) \in \Omegabar$ be the position of the particle which starts at the corner i.e. $x(0) = 0$. From \eqref{eq:Xep}, \eqref{eq:bep} and \eqref{eq:bzero} we see that heuristically
\begin{align}\label{eq:sampleODE}
\frac{\diff x}{\diff t} = x^{\frac{1}{\nu} -1 } \qq x(0) = 0.
\end{align}
Observe that as $\half<\nu<1$, the function $x^{\frac{1}{\nu} -1 }$ is not Lipschitz and hence one cannot use the Picard-Lindel\"of theorem to prove uniqueness of this ODE. Indeed one sees that this ODE has several solutions. However as the vorticity is non-negative, the flow automatically chooses the solution with the property that the particle moves to the right i.e. $x(t)>0$ for $t>0$. With this constraint the ODE has a unique solution, namely $x(t) = \sqbrac{\brac{\frac{2\nu-1}{\nu}}t}^{\frac{\nu}{2\nu-1}}$. 

So essentially any method employed to prove \thmref{thm:main} has to be strong enough that it can prove the uniqueness of the above ODE (with the constraint $x(t)>0$ for $t>0$). Hence the idea is to find a good method to prove the uniqueness of the above ODE problem and generalize it to prove uniqueness for the Euler equation. There are several ways to prove uniqueness of this ODE problem such as by directly comparing two solutions or by using a change of variable. As generalizing a comparison argument to the Euler equation looks difficult, let us see  how a change of variables argument can be used to show uniqueness for the ODE problem. Consider two solutions $x_1(t)$ and $x_2(t)$ of \eqref{eq:sampleODE} with $x_i(t)>0$ for $t>0$ for $i=1,2$. Letting $E(t) = \abs*[\big]{ x_1(t)^{\brac{2 - \frac{1}{\nu}}} -  x_2(t)^{\brac{2 - \frac{1}{\nu}}}}$ we see that
\begin{align*}
\frac{\diff E}{\diff t} \leq \abs{\brac{2 - \frac{1}{\nu}}x_1^{1 - \frac{1}{\nu}}\frac{\diff x_1}{\diff t} - \brac{2 - \frac{1}{\nu}}x_2^{1 - \frac{1}{\nu}}\frac{\diff x_2}{\diff t} } \leq 0.
\end{align*}
Therefore $E(t) = 0$ for all time and hence we have uniqueness of the ODE problem. Although this approach looks promising, we ran into several technical issues with generalizing this method to the Euler equation with the main issue being the low regularity of the vorticity (which cannot be overcome by simply assuming smoother initial vorticity). To overcome this problem, observe that using \lemref{lem:powers} we can write the energy as 
\begin{align*}
E(t) =  \abs*[\big]{ x_1(t)^{\brac{2 - \frac{1}{\nu}}} -  x_2(t)^{\brac{2 - \frac{1}{\nu}}}} \approx \abs{x_1(t) - x_2(t)}\min\cbrac{x_1(t)^{\brac{1 - \frac{1}{\nu}}}, x_2(t)^{\brac{1 - \frac{1}{\nu}}}}.
\end{align*}
Now as the unique solution to the ODE with the constraint $x(t)>0$ for $t>0$ is given by $x(t) = \sqbrac{\brac{\frac{2\nu-1}{\nu}}t}^{\frac{\nu}{2\nu-1}}$, we see that
\begin{align*}
E(t) \approx  \brac{t^{\frac{\nu}{2\nu - 1}}}^{\brac{1 - \frac{1}{\nu}}} \abs{x_1(t) - x_2(t)} =  t^{- \brac{\frac{1-\nu}{2\nu - 1}}} \abs{x_1(t) - x_2(t)}.
\end{align*}
Hence we can think of the energy as having a time depending weight. We managed to generalize this idea of using a time dependent weight in the energy and this is the way we prove \thmref{thm:main}. Let us illustrate our method by providing a proof of the ODE problem using this method.

\smallskip
\emph{Step 1}: Consider two solutions $x_1(t)$ and $x_2(t)$ of \eqref{eq:sampleODE} with $x_i(t)>0$ for $t>0$ for $i=1,2$. Prove that given $0< \ep < 1$ there exists $T>0$ so that for $i=1,2$ we have $x_i(t) \geq \sqbrac{\brac{\frac{2\nu-1}{\nu}}(1-\ep)t}^{\frac{\nu}{2\nu-1}}$ for $t \in [0,T]$. 

This can be proven by observing that $\frac{\diff x_i}{\diff t} \geq (1-\ep)x^{\frac{1}{\nu} -1 }$. Integrating this inequality we get the required estimate. 

\smallskip
\emph{Step 2}: Consider the energy $E_1(t) = \abs{x_1(t) - x_2(t)}$. By a simple inequality we prove in \lemref{lem:powers} part (1), we obtain
\begin{align*}
\frac{\diff E_1}{\diff t} \leq \abs*[\Big]{x_1^{\frac{1}{\nu} -1} - x_2^{\frac{1}{\nu} - 1}} \lesssim_\nu \abs{x_1 - x_2}^{\frac{1}{\nu} -1} = E_1^{\frac{1}{\nu} -1} \qq \tx{ and } E_1(0) = 0.
\end{align*}
Hence by integration we get $E_1(t) \lesssim_\nu t^{\frac{\nu}{2\nu-1}}$ in the time interval $[0,T]$.

\smallskip
\emph{Step 3}: Consider the energy $E(t) = t^{-\alpha}E_1(t) = t^{-\alpha} \abs{x_1(t) - x_2(t)}$.  Observe that if $0<\alpha<\frac{\nu}{2\nu-1}$ then by step 2, $E(t) \to 0$ as $t \to 0^+$. Now
\begin{align*}
\frac{\diff E}{\diff t} & \leq \brac{\frac{-\alpha}{t}}t^{-\alpha}\abs{x_1(t) - x_2(t)} + t^{-\alpha}\abs*[\Big]{x_1^{\frac{1}{\nu} -1} - x_2^{\frac{1}{\nu} - 1}} \\
& = \cbrac*[\bigg]{ \brac{\frac{-\alpha}{t}} + \abs*[\bigg]{\frac{x_1^{\frac{1}{\nu} -1} - x_2^{\frac{1}{\nu} - 1}}{x_1 - x_2}}}t^{-\alpha}\abs{x_1(t) - x_2(t)} \\
& \leq \cbrac*[\bigg]{ \brac{\frac{-\alpha}{t}} + \brac{\frac{1}{\nu} - 1}\max\cbrac*[\Big]{x_1^{\frac{1}{\nu} - 2}, x_2^{\frac{1}{\nu} - 2} }}t^{-\alpha}\abs{x_1(t) - x_2(t)}.
\end{align*}
Now using step 1, we get 
\begin{align*}
\frac{\diff E}{\diff t} & \leq \cbrac*[\bigg]{ \brac{\frac{-\alpha}{t}} + \brac{\frac{1}{\nu} - 1}\sqbrac{\brac{\frac{2\nu-1}{\nu}}(1-\ep)t}^{-1} }t^{-\alpha}\abs{x_1(t) - x_2(t)} \\
& = \cbrac{ - \alpha + \frac{1-\nu}{(2\nu -1)(1-\ep)} }t^{-\alpha-1}\abs{x_1(t) - x_2(t)}.
\end{align*}
As $\nu>1-\nu$, we see that we can suitably choose $\alpha$ and $\ep$ so that $\frac{\diff E}{\diff t}  \leq 0$. Hence $E(t) =0$ in $[0,T]$ and this proves $x_1(t) = x_2(t)$ for $t\in [0,T]$.

\smallskip
\emph{Step 4}: Uniqueness for $t\geq T$ follows from the Picard-Lindel\"of theorem by observing that there exists a $c>0$ such that $x_1(t), x_2(t) \geq c$ for all $t\geq T$. 

\medskip

The proof of \thmref{thm:main} closely follows the above strategy. The analogs of step 1-4 are \propref{prop:flow}, \propref{prop:Eone}, proof of main \thmref{thm:main} and \lemref{lem:postime} respectively. The analog of the Picard-Lindel\"of theorem is the uniqueness proof given in Sec 2.3 of \cite{MaPu94}. The assumptions on the initial vorticity \eqref{assump} are imposed so that the uniqueness problem for the Euler equation behaves in a similar manner to uniqueness problem of the ODE \eqref{eq:sampleODE}. The assumption $\w_0 \geq 0$ ensures that the particles near the boundary always move to the right and we prove this in \propref{prop:flow} and \lemref{lem:postime}. The assumption of $ \supp\brac{\w_0} \subset \Omegaplusbar$ ensures that essentially the particles in the support of the vorticity move away from the corner and this is shown in \propref{prop:flow}. Both of these properties are the analogs of the constraint $x(t)>0$ for $t>0$ for the ODE problem solved above. As mentioned before, the restrictions on the vorticity can be slightly relaxed.  In addition to the uniqueness result, we also prove the existence of weak solutions as previous existence results do not exactly cover our situation and hence we include the proof for the sake of completeness.

The paper is organized as follows: In \secref{sec:notation} we introduce the notation and derive the flow equation. In \secref{sec:weak} we prove the existence of weak solutions and establish properties of the flow map and in particular prove that the flow near the boundary moves to the right and that the support of the vorticity moves away from the corner. Finally in \secref{sec:energy} we prove the energy estimates required to prove \thmref{thm:main}. The appendix \secref{sec:appendix} contains some basic estimates that we use throughout the paper.

\medskip
\noindent \textbf{Acknowledgment}: The authors thank the anonymous reviewer for the suggestions to improve the clarity of the exposition.

\section{Notation and Preliminaries}\label{sec:notation}

Let $\Hsp = \cbrac{(x_1,x_2) \in \Csp \suchthat x_2>0}$ denote the upper half plane and we will identify $\Rsp^2 \simeq \Csp$. Let $\Hspplus = \cbrac{(x_1,x_2) \in \Csp \suchthat x_1>0 \tx{ and } x_2>0}$ and denote a ball of radius $r$ by $B_r(z_0) = B(z_0, r) = \cbrac{z\in \Csp \suchthat \abs{z-\z_0} <r}$. Let $\Dsp = B_1(0)$ be the unit disc and let $S^1 = \partial \Dsp$. For $f \in \Lone\cap\Linfty$ we write $\norm[\Lone\cap\Linfty]{f} = \norm[1]{f} + \norm[\infty]{f}$. For $z_1, z_2 \in \Hsp$, let $[z_1, z_2]$ denote the line segment connecting $z_1$ and $z_2$. We define the function $\phi:[0,\infty) \to \Rsp$ as $\phi(0) =0 $ and for $x>0$ as  
\begin{align}\label{def:phi}
\phi(x) = x\max\cbrac{-\ln(x), 1}.
\end{align}
Observe that $\phi$ is a continuous increasing function on $[0,\infty)$ with  $x\leq \phi(x) $ for all $x\geq 0$ and that $\phi$ is a concave function on the interval $[0,1/10]$. Also observe that if $c\geq 1$ then $\phi(cx) \leq c\phi(x)$ for all $x\geq 0$. 

We now introduce a notation for certain integrals which appear in our computations. Let $f\in \Linfty(\Csp)$ and let $z_1, \cdots, z_n \in \Csp$ be $n$ distinct complex numbers. If $ \alpha_1,\cdots, \alpha_n \geq 0$ and $0\leq r, R \leq \infty$ we define
\begin{align}\label{def:I}
I((z_1,\alpha_1), \cdots, (z_n,\alpha_n):(f, r,R)) = \int_{A} \frac{1}{\abs{s-z_1}^{\alpha_1}\cdots\abs{s-z_n}^{\alpha_n}}\abs{f(s)} \diff s,
\end{align} 
where $A = \nobrac{B(z_1,r)^c\cap\cdots\cap B(z_n,r)^c}\cap\nobrac{ B(z_1,R)\cap\cdots\cap B(z_n,R)}$. Observe that the set $A$ is the set of all $s\in \Csp$ with distance to the set $\cbrac{z_1,\cdots,z_n}$ between $r$ and $R$.

We write $a \lesssim b$ if there exists a universal constant $C>0$ so that $a\leq Cb$. We write $a\lesssim_{\eta} b$ if there exists a constant $C = C(\eta)>0$ depending only on $\eta$ so that $a\leq Cb$. Similar definitions for $\lesssim_{\eta_1,\eta_2}$, $\lesssim_{\eta_1,\eta_2, \eta_3}$ etc. We write $a \approx b$ if $a \lesssim b$ and $b\lesssim a$. Similarly we write $a \approx_\eta b$ if $a\lesssim_\eta b$ and $b \lesssim_\eta a$ etc. In this paper we fix the angle of the domain $\Omega$ as $\nu \pi$ (with $1/2< \nu < 1$) and we will suppress the dependence of constants on $\nu$ as it shows up quite frequently. 

Let us now derive the equation of the flow. As we are only interested in the flow in $\Omega$ and domains which smoothly approximate $\Omega$, we derive the equation only for such domains. Let $\Lambda$ be a domain homeomorphic to $\Hsp$ or $\Dsp$ with $\partial\Lambda$ being correspondingly homeomorphic to $\Rsp$ or $S^1$. If the Green's function of the domain $\Lambda$ is $G_{\Lambda}(x,y)$, then the kernel of the Biot-Savart law is $ K_{\Lambda}(x,y) := \grad_x^\perp G_{\Lambda}(x,y)$ with $\grad^\perp_x  =  (-\partial_{x_2}, \partial_{x_1})$. Let $\Psi:\Lambda \to \Hsp$ be a Riemann map and observe that $\Psi$ extends continuously to $\Lambdabar$ by Carath\'{e}odary's theorem. Fix $\ztwo \in \Lambda$ and let $f: \Lambda\backslash\cbrac{\ztwo} \to \Csp$ be defined as  
\begin{align}\label{eq:f}
f(\z) =  \frac{1}{2\pi}\ln\brac*[\Bigg]{\frac{\Psi(z) - \Psi(\ztwo) }{\Psi(z) - \Psibar(\ztwo) }}.
\end{align}
Clearly $f$ is holomorphic and we have for $\zone,\ztwo \in \Lambda$, $\zone\neq \ztwo$
\begin{align*}
G_{\Lambda}(\zone,\ztwo) =  \frac{1}{2\pi}\ln\abs*[\Bigg]{\frac{\Psi(\zone) - \Psi(\ztwo) }{\Psi(\zone) - \Psibar(\ztwo) }} = \Real\cbrac{f(\zone)}.
\end{align*}
Hence
\begin{align*}
K_{\Lambda}(\zone,\ztwo) & = \Real\begin{pmatrix} -\partial_{x_2} f(\zone) \\ \partial_{x_1} f(\zone) \end{pmatrix} \\
& = \Real(-i f_{x_1}(\zone)) + i \Real(f_{x_1}(z_1)) \\
& = i\overline{f_z}(z_1).
\end{align*}
Then from \eqref{eq:f} we have
\begin{align}\label{eq:Kernel}
K_{\Lambda}(\zone,\ztwo) =  \brac{\frac{i}{2\pi}}\Psizbar(\zone)\sqbrac{\frac{1}{\Psibar(\zone) - \Psibar(\ztwo)} - \frac{1}{\Psibar(\zone) - \Psi(\ztwo)}}.
\end{align}
If $\w(\cdot,t)$ is the vorticity at time $t$, then from the Biot-Savart law we see that 
\begin{align*}
u(z_1,t) = \int_{\Lambda} K_{\Lambda}(z_1,z_2)\w(z_2,t)\diff z_2.
\end{align*}
Now the equation for the flow $X:\Lambda\times [0,\infty) \to \Lambda$ is given by
\begin{align*}
\frac{\diff X(x,t)}{\diff t} & = u(X(x,t),t) = \int_{\Lambda} K_{\Lambda}(X(x,t),z)\w(z,t) \diff z.
\end{align*}
Hence we have
\begin{align*}
\frac{\diff X(x,t)}{\diff t} = \brac*[\Big]{\frac{i}{2\pi}}\Psizbar(X(x,t)) \int_{\Lambda}\sqbrac{\frac{1}{\Psibar(X(x,t)) - \Psibar(z)} - \frac{1}{\Psibar(X(x,t)) - \Psi(z)}}\w(z,t) \diff z.
\end{align*}
Define the function $b:\Lambda\times [0,\infty) \to \Csp$ as 
\begin{align}\label{eq:b}
b(x,t) =  \brac*[\Big]{\frac{i}{2\pi}} \int_{\Lambda}\sqbrac{\frac{1}{\Psibar(x) - \Psibar(z)} - \frac{1}{\Psibar(x) - \Psi(z)}}\w(z,t) \diff z.
\end{align}
Hence the equation for $X$ can be written as
\begin{align}\label{eq:X}
\frac{\diff X(x,t)}{\diff t} = b(X(x,t),t)\Psizbar(X(x,t)).
\end{align}

We now convert the flow equation above in $\Lambda$ to a flow equation on $\Hsp$. For $x\in \Lambda$, let $y \in \Hsp$ be given by $y = \Psi(x)$. Consider the flow $Y:\Hsp\times[0,\infty) \to \Hsp $ given by 
\begin{align}\label{def:Y}
Y(y,t) = \Psi(X(x,t))
\end{align}
and define $\btil:\Hsp \times [0,\infty) \to \Csp$ as $\btil(y,t) = b(x,t)$. Then $\btil(Y(y,t),t) = b(X(x,t),t)$ and we have
\begin{align}\label{eq:Y}
\frac{\diff Y(y,t)}{\diff t} = \btil(Y(y,t),t)\abs{\Psi_z \compose \Psi^{-1} (Y(y,t))}^2.
\end{align}
We can write a simple formula for $\btil$. As $y = \Psi(x)$ we see that
\begin{align*}
\btil(y,t) = b(x,t) & =  \brac*[\Big]{\frac{i}{2\pi}} \int_{\Lambda}\sqbrac{\frac{1}{\Psibar(x) - \Psibar(z)} - \frac{1}{\Psibar(x) - \Psi(z)}}\w(z,t) \diff z \\
& =  \brac*[\Big]{\frac{i}{2\pi}} \int_{\Lambda}\sqbrac{\frac{1}{\ybar - \Psibar(z)} - \frac{1}{\ybar - \Psi(z)}}\w(z,t) \diff z.
\end{align*}
Next, we change variables by setting $s = \Psi(z)$ with $s\in\Hsp$ and observe that $\diff s = \abs{\Psiz(z)}^2\diff z$ and hence $\diff z = \abs{\Psiz\compose \Psi^{-1} (s)}^{-2} \diff s$. Defining $\wtil:\Hsp\times[0,\infty) \to \Rsp$ as $\wtil(s,t) = \w(z,t)$ we get
\begin{align}\label{eq:btil}
\btil(y,t) =  \brac*[\Big]{\frac{i}{2\pi}} \int_{\Hsp}\sqbrac{\frac{1}{\ybar - \sbar} - \frac{1}{\ybar - s}}\wtil(s,t)  \abs{\Psiz\compose \Psi^{-1} (s)}^{-2} \diff s.
\end{align}

\section{Weak Solutions}\label{sec:weak}

We now give the definition of Yudovich weak solutions and prove their existence for the domain $\Omega$. We prove the existence of weak solutions in $\Omega$ as the previous existence results do not apply directly. The existence proof of Taylor \cite{Ta00} and Gerard-Varet and Lacave \cite{GeLa13,GeLa15} are either for bounded domains or for exterior domains. We modify the existence proof for $\Rsp^2$ as given in the book by Majda and Bertozzi \cite{MaBe02} to prove existence of weak solutions in $\Omega$. Similar to  \cite{Ta00} and \cite{GeLa13}, we approximate the domain $\Omega$ by smooth domains $\Omega_\ep$ and then take a limit as $\ep \to 0$. Even though the method for proving existence of weak solutions is quite standard, we include it for the sake of completeness. 

For the definition of weak solution we closely follow the definition as given in \cite{GeLa13,GeLa15}. If $\Lambda$ is homeomorphic to $\Dsp$ with $\partial\Lambda$ homeomorphic to $S^1$, we use the definition of Yudovich weak solution as given in \cite{LaZl19}. Hence now consider a domain $\Lambda$ homeomorphic to $\Hsp$ with $\partial\Lambda$ being homeomorphic to $\Rsp$. We are mostly interested in the case with $\Lambda = \Omega$ and the definition below is tailored to domains similar to $\Omega$. For more general domains a slightly different definition as compared to the one below may be needed. We say that $(u,\w)$ is in the Yudovich class in the time $[0,T)$ if 

\begin{align}\label{eq:Yudovich}
\begin{split}
& u \in \Linfty_{loc}([0,T); \Ltwo_{loc}(\Lambdabar)), \qq  \w = \grad \times u \in \Linfty([0,T); \Lone(\Lambda) \cap \Linfty(\Lambda)), \quad\\
\tx{ and}\enspace &  u(\cdot,t) \in C(\Lambdabar) \enspace  \text{ with }  \lim_{R\to \infty} \sup_{\abs{x} \geq R} \abs{u(x,t)} = 0 \quad \tx{ for a.e. } t\in [0,T).
\end{split}
\end{align}
Now let
\begin{align*}
G_c(\Lambda) = \cbrac{ h \in \Ltwo_c(\Lambdabar) \suchthat h = \grad p \tx{ for some } p \in H^1_{loc}(\Lambda)}.
\end{align*}
Consider initial data $(u_0,\w_0)$ satisfying
\begin{align}\label{eq:initial}
\begin{split}
&u_0 \in C(\Lambdabar), \qq \w_0 = \grad \times u_0 \in \Lone(\Lambda)\cap\Linfty(\Lambda), \quad  \quad  \\
 &  \lim_{R \to \infty} \sup_{\abs{x} \geq R} \abs{u_0(x)} = 0 \qquad  \tx{ and } \quad \int_{\Lambda} u_0 \cdot h = 0 \quad \forall h \in G_c(\Lambda).
 \end{split}
\end{align}
\begin{definition}
We say that $(u,\w)$ is a Yudovich weak solution to the Euler equation \eqref{eq:Euler} with initial condition $(u_0,\w_0)$ in the time interval $[0,T)$, if $(u,\w)$ is in the Yudovich class \eqref{eq:Yudovich} and satisfies
\begin{align}\label{eq:weak}
\int_0^T \int_{\Lambda} \w(\pt \varphi + u\cdot \grad \varphi) \diff x \diff t = - \int_{\Lambda} \w_0 \varphi(\cdot,0) \diff x \qq \forall \varphi \in C^{\infty}_c (\Lambda\times[0,T)),
\end{align}
and for $a.e.$ $t\in [0,T)$ we have  
\begin{align}\label{eq:divweak}
\int_{\Lambda} u(\cdot,t)\cdot h = 0 \qq \forall h\in G_c(\Lambda).
\end{align}
\end{definition}

Note that we have given the definition of Yudovich weak solutions as weak solutions to the transport equation. It can be shown that this is equivalent to the definition of weak solution to the Euler equation, see Remark 1.2 of \cite{GeLa15}. Now for $\half < \nu < 1$ let us now consider the domain $\Omega = \cbrac{re^{i\theta} \in \Csp \suchthat r>0 \tx{ and } 0<\theta<\nu\pi} $ and the Riemann map $\Psi : \Omega \to \Hsp$ given by $\Psi(z) = z^{\frac{1}{\nu}}$. We want to prove the existence of Yudovich weak solutions for this domain and understand the properties of the flow map.

\subsection{Existence of weak solutions}

In this section we prove the existence of Yudovich weak solutions in $\Omega$. We will approximate $\Omega$ with smooth bounded domains $\Omega_\ep$. For $\ep = 0$ we define $\Omega_0 := \Omega$ and for $0<\ep\leq 1$ we define $\Omega_\ep$ as
\begin{align*}
\Omega_\ep := \cbrac{\z^\nu \suchthat z \in B\brac{i\brac*[\Big]{\ep + \frac{1}{2\ep}}, \frac{1}{2\ep}}}.
\end{align*} 
It is easy to see that for $0<\ep\leq 1$,  $\Omega_\ep$ are smooth bounded domains with $\Omega_{\ep_1} \subset \Omega_{\ep_2}$ for $\ep_1 \geq \ep_2$ and $\cup_{0<\ep\leq 1} \Omega_\ep = \Omega$. Let $\Psi_\ep : \Omega_\ep \to \Hsp$ be a Riemann map and let $\Psi_\ep^{-1}$ be its inverse. We now give an explicit formula for these maps. First consider the map
\begin{align}\label{eq:mapbasic}
z \mapsto \frac{z}{1 - i\ep z} + i\ep \qq \tx{ for } z \in \Hsp.
\end{align}
We see that for this map $0 \mapsto i\ep$, $\frac{i}{\ep} \mapsto i\brac{\ep + \frac{1}{2\ep}}$ and $\infty \mapsto i\brac{\ep + \frac{1}{\ep}}$. Hence this maps the upper half plane $\Hsp$ to the ball $B\brac{i\brac*[\Big]{\ep + \frac{1}{2\ep}}, \frac{1}{2\ep}}$. Let the topmost point of this ball be defined as $c(\ep) := i\brac{\ep + \frac{1}{\ep}}$. Hence $(c(\ep))^\nu \in \partial \Omega_\ep$. Hence we see that 
\begin{align}\label{eq:Psiinvep}
\Psi_\ep^{-1}(z) = \sqbrac{\frac{z}{1 - i\ep z} + i\ep}^\nu = \sqbrac{c(\ep) + \frac{1}{\ep(i + \ep\z)}}^\nu.
\end{align}
Therefore we obtain
\begin{align}\label{eq:Psiep}
\Psi_\ep(z) = \frac{z^{\frac{1}{\nu}} - i\ep}{1 + \ep^2 + i\ep z^{\frac{1}{\nu}}}  = -\frac{i}{\ep} + \frac{1}{\ep^2(z^{\frac{1}{\nu}} - c(\ep))}.
\end{align}
For $\ep = 0$ we define the maps
\begin{align}\label{eq:Psiepzero}
\Psi(z) = z^{\frac{1}{\nu}} \qq \tx{ and } \qq \Psi^{-1}(z) = z^\nu.
\end{align}
It is clear that for fixed $z \in \Omega$ we have $\lim_{\ep \to 0} \Psi_\ep(z) = z^{\frac{1}{\nu}} = \Psi(z)$. Similarly for fixed $z \in \Hsp$ we have $\lim_{\ep \to 0} \Psi_\ep^{-1}(z) = z^\nu = \Psi^{-1}(z)$. Also note tht for $\ep =0$, we have from \eqref{eq:Kernel} that
\begin{align}\label{eq:Kepzero}
K_{\Omega}(\zone,\ztwo) =  \brac{\frac{i}{2\pi\nu}}\zonebar^{\brac{\frac{1}{\nu} - 1}} \sqbrac{\frac{1}{\zonebar^{\frac{1}{\nu}} - \ztwobar^{\frac{1}{\nu}}} - \frac{1}{\zonebar^{\frac{1}{\nu}} - \ztwo^{\frac{1}{\nu}} } }.
\end{align}

Let us now prove some basic properties of these maps.
\begin{lemma}\label{lem:Psiep}
Let $0\leq \ep \leq 1$. Then 
\begin{enumerate}
\item Suppose $\ep>0$. Then for $z_1, z_2 \in \Omega_\ep$, we have
\begin{align*}
\abs{\Psi_\ep(z_1) - \Psi_\ep(z_2)} \approx \frac{1}{\ep^2}\frac{\abs*[\big]{z_1^{\frac{1}{\nu}} - z_2^{\frac{1}{\nu}}}}{\abs*[\big]{z_1^{\frac{1}{\nu}} - c(\ep)} \abs*[\big]{z_2^{\frac{1}{\nu}} - c(\ep)}}.
\end{align*}
\item Suppose $\ep>0$. Then for $z \in \Omega_\ep$ we have
\begin{align*}
\abs{\Psi_\ep(z)} \lesssim \frac{1}{\ep^2\abs*[\big]{z^{\frac{1}{\nu}} - c(\ep)}} \qq \tx{ and } \qq \abs{(\Psi_\ep)_z(z)} \approx \frac{1}{\ep^2} \frac{\abs{z}^{\frac{1}{\nu} - 1}}{\abs*[\big]{z^{\frac{1}{\nu}} - c(\ep)}^2}.
\end{align*}

\item For $z_1, z_2 \in \Omega_\ep$ with $z_1 \neq z_2$,  we have
\begin{align*}
\abs{K_{\Omega_\ep}(z_1, z_2)}  \lesssim  \frac{1}{\abs{z_1 - z_2}}.
\end{align*}
\item For $z \in \Hsp$ we have
\begin{align*}
\abs{(\Psi_\ep)_z (\Psi_\ep^{-1} (z))}^{-2} \lesssim \abs{z + i\ep}^{2\nu - 2}.
\end{align*}
\end{enumerate}
\end{lemma}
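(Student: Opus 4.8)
All four parts follow from the explicit formulas \eqref{eq:Psiep} and \eqref{eq:Psiinvep}; parts (1), (2) and (4) are essentially direct computations, while part (3) is the substantive one. Throughout I write $c = c(\ep) = i\brac{\ep + \frac{1}{\ep}}$ and recall that for $z \in \Omega_\ep$ the point $z^{\frac{1}{\nu}}$ lies in the disc $B\brac{i\brac{\ep + \frac{1}{2\ep}}, \frac{1}{2\ep}}$, of which $c$ is the topmost boundary point, so that $\abs{z^{\frac{1}{\nu}} - c} \le \frac{1}{\ep}$ (the diameter). For part (1) I would subtract the two copies of the second form of $\Psi_\ep$ in \eqref{eq:Psiep}: the constant $-\frac{i}{\ep}$ cancels and
\[
\Psi_\ep(z_1) - \Psi_\ep(z_2) = \frac{1}{\ep^2}\,\frac{z_2^{\frac{1}{\nu}} - z_1^{\frac{1}{\nu}}}{\brac{z_1^{\frac{1}{\nu}} - c}\brac{z_2^{\frac{1}{\nu}} - c}},
\]
which is the claimed identity (in fact an equality). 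For part (2) the bound $\abs{z^{\frac{1}{\nu}} - c} \le \frac{1}{\ep}$ gives $\frac{1}{\ep} \le \frac{1}{\ep^2\abs{z^{\frac{1}{\nu}} - c}}$, so the constant term in \eqref{eq:Psiep} is dominated by the second term and the first estimate follows; the derivative estimate comes from differentiating the second form of \eqref{eq:Psiep} and taking moduli.

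For part (4) I would evaluate the derivative formula of part (2) at $w := \Psi_\ep^{-1}(z)$. Using \eqref{eq:Psiinvep}, $w^{\frac{1}{\nu}} = \frac{z}{1 - i\ep z} + i\ep = \frac{(1+\ep^2)z + i\ep}{1 - i\ep z}$, whence $w^{\frac{1}{\nu}} - c = \frac{-i}{\ep(1 - i\ep z)}$; since $\abs{1 - i\ep z} = \abs{i + \ep z}$ for every $z$, the formula of part (2) becomes $\abs{(\Psi_\ep)_z(w)} \approx \abs{w^{\frac{1}{\nu}}}^{1-\nu}\abs{i + \ep z}^2$, so $\abs{(\Psi_\ep)_z(w)}^{-2} \approx \abs{w^{\frac{1}{\nu}}}^{2\nu - 2}\abs{i + \ep z}^{-4}$. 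Substituting $\abs{w^{\frac{1}{\nu}}} = \abs{(1+\ep^2)z + i\ep}/\abs{i + \ep z}$ and collecting powers gives $\abs{(\Psi_\ep)_z(w)}^{-2} \approx \abs{(1+\ep^2)z + i\ep}^{2\nu - 2}\abs{i + \ep z}^{-2\nu - 2}$. For $z \in \Hsp$ one has $\abs{i + \ep z} \ge 1$ (its imaginary part is $\ge 1$) and $\abs{(1+\ep^2)z + i\ep} \ge \abs{z + i\ep}$ (compare the two squared moduli term by term, using $\Imag z > 0$); since $2\nu - 2 < 0$ both facts point in the right direction and yield the claimed bound $\abs{(\Psi_\ep)_z(w)}^{-2} \lesssim \abs{z + i\ep}^{2\nu - 2}$.

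For part (3) I would start from \eqref{eq:Kernel} with $\Lambda = \Omega_\ep$. Writing $y_j = \Psi_\ep(z_j) \in \Hsp$ and $w_j = z_j^{\frac{1}{\nu}}$, the bracket equals $\frac{\overline{y_2} - y_2}{(\overline{y_1} - \overline{y_2})(\overline{y_1} - y_2)}$, so
\[
\abs{K_{\Omega_\ep}(z_1, z_2)} = \frac{1}{\pi}\,\frac{\Imag\brac{\Psi_\ep(z_2)}\,\abs{(\Psi_\ep)_z(z_1)}}{\abs{\Psi_\ep(z_1) - \Psi_\ep(z_2)}\,\abs{\overline{\Psi_\ep(z_1)} - \Psi_\ep(z_2)}}.
\]
The key structural point is that one must \emph{not} bound the factor $\abs{\overline{y_1} - y_2}$ crudely below by $\Imag y_2$: near the smooth cap of $\partial\Omega_\ep$ (the point $c^{\nu}$, where $\Psi_\ep \to \infty$) both $\abs{y_1 - y_2}$ and $\abs{\overline{y_1} - y_2}$ blow up and exactly compensate the blow-up of $(\Psi_\ep)_z(z_1)$, so the quadratic decay of the half-plane kernel at infinity is essential for a bound uniform in $\ep$. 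I would therefore keep the full expression, insert the identities of parts (1) and (2), and use the power estimate $\abs{z_1^{\frac{1}{\nu}} - z_2^{\frac{1}{\nu}}} \gtrsim \abs{z_1 - z_2}\,\abs{z_1}^{\frac{1}{\nu} - 1}$ (a consequence of \lemref{lem:powers}, valid because $z_1, z_2$ lie in the sector of opening $\nu\pi < \pi$) to absorb the factor $\abs{z_1}^{\frac{1}{\nu}-1}\abs{z_1 - z_2}/\abs{z_1^{\frac{1}{\nu}} - z_2^{\frac{1}{\nu}}}$. After the cancellations the claim reduces to the single inequality
\[
\Imag\brac{\Psi_\ep(z_2)}\,\abs{z_2^{\frac{1}{\nu}} - c} \lesssim \abs{z_1^{\frac{1}{\nu}} - c}\,\abs{\overline{\Psi_\ep(z_1)} - \Psi_\ep(z_2)}.
\]

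To finish I would compute both sides explicitly. Write $x_j = \Real(w_j)$ and $d_j = \Imag c - \Imag(w_j) \ge 0$, so that $\abs{w_j - c}^2 = x_j^2 + d_j^2$; then $\Imag\Psi_\ep(z_2) = \brac{d_2 - \ep\abs{w_2 - c}^2}/\brac{\ep^2\abs{w_2 - c}^2}$, the nonnegativity being exactly the membership $w_2 \in B$. Putting $\overline{\Psi_\ep(z_1)} - \Psi_\ep(z_2)$ over the common denominator $\ep^2\brac{\overline{w_1} + c}\brac{w_2 - c}$ and using $\abs{\overline{w_1} + c} = \abs{w_1 - c}$ (since $\overline{c} = -c$), the displayed inequality becomes $\abs{N} \ge d_2 - \ep\abs{w_2 - c}^2$, where $N$ is the resulting numerator. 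Taking imaginary parts and completing the square gives
\[
-\Imag N = \brac{d_2 - \ep\abs{w_2 - c}^2} + \brac{d_1 - \ep\abs{w_1 - c}^2} + \ep(x_1 - x_2)^2 + \ep(d_1 - d_2)^2 \ge d_2 - \ep\abs{w_2 - c}^2 ,
\]
the last three terms being $\ge 0$ (the second because $w_1 \in B$). Hence $\abs{N} \ge -\Imag N \ge d_2 - \ep\abs{w_2 - c}^2$, closing part (3). The case $\ep = 0$ is easier: there is no cap, $\Psi = z^{\frac{1}{\nu}}$, the factor $\Imag\Psi(z_2)/\abs{\overline{\Psi(z_1)} - \Psi(z_2)} \le 1$ may simply be discarded, and the power estimate alone gives the bound. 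The main obstacle is precisely the $\ep > 0$ analysis just described: organizing the computation so that the compensation between the blow-up of $(\Psi_\ep)_z(z_1)$ and the decay of the half-plane kernel near the cap is captured uniformly in $\ep$, which is what forces retaining the $\abs{\overline{\Psi_\ep(z_1)} - \Psi_\ep(z_2)}$ factor and reducing everything to the clean disc-membership inequality $\abs{N} \ge d_2 - \ep\abs{w_2 - c}^2$.
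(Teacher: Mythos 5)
Your proof is correct. Parts (1) and (2) coincide with the paper's own argument (direct computation from \eqref{eq:Psiep} together with the bound $\ep\abs*[\big]{z^{\frac{1}{\nu}}-c(\ep)}\le 1$), and part (4) is the paper's argument up to a cosmetic difference: the paper obtains $\abs{z+i\ep}\le 2\abs{z+i\ep+\ep^2z}$ from the identity $z+i\ep = \brac{(z+i\ep+\ep^2z)+i\ep^3}/(1+\ep^2)$, whereas you compare the squared moduli term by term; both are fine. Part (3) is where you genuinely diverge, and it is the substantive part. The paper makes the same initial reduction via \lemref{lem:powers} to the bound $\abs{K_{\Omega_\ep}(z_1,z_2)}\lesssim \abs{z_1}^{\frac{1}{\nu}-1}/\abs*[\big]{z_1^{\frac{1}{\nu}}-z_2^{\frac{1}{\nu}}}$, but then works only with the order-of-magnitude formula \eqref{eq:absKep} and a two-case analysis on whether $\abs*[\big]{z_2^{\frac{1}{\nu}}-c(\ep)} < 2\abs*[\big]{z_1^{\frac{1}{\nu}}-c(\ep)}$: in the first case the factor containing $\Imag\Psi_\ep(z_2)$ is simply discarded (bounded by $1$) and parts (1)--(2) close the estimate; in the second case $\abs*[\big]{z_2^{\frac{1}{\nu}}-c(\ep)}\approx\abs*[\big]{z_1^{\frac{1}{\nu}}-z_2^{\frac{1}{\nu}}}$ and one keeps $\abs{\Psi_\ep(z_2)}$ together with both powers of $\abs{\Psi_\ep(z_1)-\Psi_\ep(z_2)}$. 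You instead keep the kernel as an exact identity, cancel everything using the exact forms of parts (1)--(2), and reduce the claim to the single inequality $\abs{N}\ge d_2-\ep\abs{w_2-c}^2$, which you prove by taking imaginary parts and completing squares so that the disc memberships $d_j\ge\ep\abs{w_j-c}^2$ enter exactly where needed; I checked the identity $-\Imag N = \brac{d_2-\ep\abs{w_2-c}^2}+\brac{d_1-\ep\abs{w_1-c}^2}+\ep(x_1-x_2)^2+\ep(d_1-d_2)^2$ and it is correct, as is the equivalence of $\Imag\Psi_\ep(z_2)\ge 0$ with membership in the disc. What each approach buys: yours has no case analysis and yields the explicit constant $\frac{1}{\pi\nu}$ before \lemref{lem:powers} is invoked, but it leans on the exact M\"obius algebra of $\Psi_\ep$; the paper's uses only the $\approx$ statements of parts (1)--(2), so it is more robust (it would survive knowing $\Psi_\ep$ only up to multiplicative constants), at the cost of the case split. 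Both arguments are complete.
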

\begin{proof}
We prove the estimates sequentially:
\begin{enumerate}
\item This estimate follows directly from \eqref{eq:Psiep}.
\item Observe that for any $z \in \Omega_\ep$, we have that $z^\frac{1}{\nu} \in B\brac{i\brac*[\Big]{\ep + \frac{1}{2\ep}}, \frac{1}{2\ep}}$. Hence for any $z\in \Omega_\ep$ we see that 
\begin{align}\label{eq:basicep}
\ep \abs*[\big]{z^\frac{1}{\nu} - c(\ep)} \leq 1.
\end{align}
From this we see that for any $z\in \Omega_\ep$
\begin{align*}
\abs{\Psi_\ep(z)} \leq \frac{2}{\ep^2\abs*[\big]{z^{\frac{1}{\nu}} - c(\ep)}}.
\end{align*}
Now taking a derivative in \eqref{eq:Psiep} we get
\begin{align}\label{eq:Psizep}
(\Psi_\ep)_z(z) = \frac{-z^{\brac{\frac{1}{\nu} - 1}}}{\nu\ep^2(z^\frac{1}{\nu} - c(\ep))^2}.
\end{align}
Therefore the estimate follows.
\item First note that using \lemref{lem:powers} we get
\begin{align*}
\frac{\abs{z_1}^{\frac{1}{\nu} - 1}}{\abs*[\big]{z_1^{\frac{1}{\nu}} - z_2^{\frac{1}{\nu}}}} \lesssim \frac{\abs{z_1}^{\frac{1}{\nu} - 1}}{\abs{z_1 - z_2}\max\cbrac{\abs{z_1}^{\frac{1}{\nu} - 1}, \abs{z_2}^{\frac{1}{\nu} - 1}} } \lesssim \frac{1}{\abs{z_1 - z_2}}.
\end{align*}
Hence it is enough to prove that
\begin{align*}
\abs{K_{\Omega_\ep}(z_1, z_2)} \lesssim \frac{\abs{z_1}^{\frac{1}{\nu} - 1}}{\abs*[\big]{z_1^{\frac{1}{\nu}} - z_2^{\frac{1}{\nu}}}}.
\end{align*}
For $\ep = 0$ this follows directly from \eqref{eq:Kepzero}. Now let $\ep > 0$. From \eqref{eq:Kernel} we see that
\begin{align}\label{eq:absKep}
\abs{K_{\Omega_\ep}(z_1,z_2)} \approx \frac{\abs{(\Psi_\ep)_z(z_1) }\abs{\Psi_\ep(z_2) - \Psibar_\ep(z_2)} }{\abs{\Psi_\ep(z_1) - \Psi_\ep(z_2)}\cbrac{\abs{\Psi_\ep(z_1) - \Psi_\ep(z_2)} + \abs{\Psi_\ep(z_2) - \Psibar_\ep(z_2)}}}.
\end{align}
\textbf{Case 1:} $\abs*[\big]{z_2^\frac{1}{\nu} - c(\ep)} < 2 \abs*[\big]{z_1^\frac{1}{\nu} - c(\ep)}.$

In this case we see from the first estimate of this lemma that
\begin{align*}
\frac{1}{\abs{\Psi_\ep(z_1) - \Psi_\ep(z_2)}} \lesssim \frac{\ep^2 \abs*[\big]{z_1^{\frac{1}{\nu}} - c(\ep)}^2}{\abs*[\big]{z_1^{\frac{1}{\nu}} - z_2^{\frac{1}{\nu}}}}.
\end{align*}
Hence using \eqref{eq:absKep} and the second estimate of this lemma we obtain
\begin{align*}
\abs{K_{\Omega_\ep}(z_1,z_2)} \lesssim \frac{\abs{(\Psi_\ep)_z(z_1) }}{\abs{\Psi_\ep(z_1) - \Psi_\ep(z_2)}} \lesssim \frac{\abs{z_1}^{\frac{1}{\nu} - 1}}{\abs*[\big]{z_1^{\frac{1}{\nu}} - z_2^{\frac{1}{\nu}}}}.
\end{align*}
\textbf{Case 2:} $\abs*[\big]{z_2^\frac{1}{\nu} - c(\ep)} \geq 2 \abs*[\big]{z_1^\frac{1}{\nu} - c(\ep)}$.

In this case we see that $\abs*[\big]{z_2^\frac{1}{\nu} - c(\ep)} \approx \abs*[\big]{z_1^{\frac{1}{\nu}} - z_2^{\frac{1}{\nu}}}$. Hence from the first estimate of this lemma we have
\begin{align*}
\frac{1}{\abs{\Psi_\ep(z_1) - \Psi_\ep(z_2)}} \lesssim \ep^2 \abs*[\big]{z_1^{\frac{1}{\nu}} - c(\ep)}.
\end{align*}
From this estimate, \eqref{eq:absKep} and the second estimate of this lemma we have
\begin{align*}
\abs{K_{\Omega_\ep}(z_1,z_2)} \lesssim  \frac{\abs{(\Psi_\ep)_z(z_1) }\abs{\Psi_\ep(z_2)} }{\abs{\Psi_\ep(z_1) - \Psi_\ep(z_2)}^2 } \lesssim \frac{\abs{z_1}^{\frac{1}{\nu} - 1}}{\abs*[\big]{z_2^{\frac{1}{\nu}} - c(\ep)}} \lesssim \frac{\abs{z_1}^{\frac{1}{\nu} - 1}}{\abs*[\big]{z_1^{\frac{1}{\nu}} - z_2^{\frac{1}{\nu}}}}.
\end{align*}
\item For $\ep = 0$ we see from \eqref{eq:Psiepzero} that
\begin{align}\label{eq:Psizepzero}
\Psi_z (\Psi^{-1} (z)) = \frac{1}{\nu} z^{1 - \nu}.
\end{align}
Hence we get $\abs{\Psi_z (\Psi^{-1} (z))}^{-2} \lesssim \abs{z}^{2\nu -2}$. Now let $\ep >0$. Using \eqref{eq:Psizep} and \eqref{eq:Psiinvep} we see that
\begin{align*}
(\Psi_\ep)_z (\Psi_\ep^{-1} (z)) = -\frac{1}{\nu} (i + \ep z)^2 \sqbrac{\frac{z}{1 - i\ep z} + i\ep}^{1 - \nu}.
\end{align*}
Now using the fact that for all $z \in \Hsp$ we have $\abs{i + \ep z} \geq 1$, we obtain
\begin{align}\label{eq:temp23}
\abs{(\Psi_\ep)_z (\Psi_\ep^{-1} (z))}^{-2} \approx \frac{\abs{z + i\ep + \ep^2 z}^{2\nu -2}}{\abs{i + \ep z}^{2 + 2\nu}} \lesssim \abs{z + i\ep + \ep^2 z}^{2\nu -2}.
\end{align}
We also note that
\begin{align*}
z + i\ep = \frac{(1 + \ep^2)}{(1 + \ep^2)}(z + i \ep) = \frac{(z + i\ep + \ep^2 z) + i \ep^3}{1 + \ep^2}.
\end{align*}
Hence using the fact that for all $z \in \Hsp$ we have $\abs{z + i\ep + \ep^2 z} \geq \ep$, we obtain
\begin{align*}
\abs{z + i\ep} \leq \abs{z + i\ep + \ep^2 z} + \ep^3 \leq \abs{z + i\ep + \ep^2 z} + \ep \leq 2\abs{z + i\ep + \ep^2 z}.
\end{align*}
Combining this with \eqref{eq:temp23} we get the required estimate
\begin{align*}
\abs{(\Psi_\ep)_z (\Psi_\ep^{-1} (z))}^{-2}  \lesssim \abs{z + i\ep + \ep^2 z}^{2\nu -2} \lesssim \abs{z + i\ep}^{2\nu - 2}.
\end{align*}
\end{enumerate}
\end{proof}



We now prove some basic properties of the velocity on the  domains $\Omega_\ep$ and also show that the velocity has to be given by the Biot-Savart law for the domain $\Omega$. 
\begin{lemma}\label{lem:velocity}
Let $0\leq \ep \leq 1$ and let $g \in \Lone(\Omega_\ep) \cap \Linfty(\Omega_\ep)$. If $v(x) = \int_{\Omega_\ep}K_{\Omega_\ep}(x,y)g(y) \diff y$ and $\phi$ is given by \eqref{def:phi} then 
\begin{enumerate}
\item $\norm[\infty]{v} \lesssim \norm[\infty]{g}^{2/3}\norm[1]{g}^{1/3} + \norm[1]{g} \lesssim \norm[\Lone\cap\Linfty]{g}$.
\item If $\ep = 0$, then for $\zone,\ztwo  \in \Omegabar$ we have
\begin{align*}
& \abs{v(\zone) - v(\ztwo)} \\
& \lesssim \norm[\Lone\cap\Linfty]{g}\abs{\zone - \ztwo}\min\cbrac{\abs{\zone}^{\frac{1}{\nu} -2}, \abs{\ztwo}^{\frac{1}{\nu} -2}} +  \norm[\Lone\cap\Linfty]{g}\phi(\abs{\zone - \ztwo}).
\end{align*}
Hence $v$ is continuous on $\Omegabar$. Now let $K \subset \Omega$ be a compact set and suppose $0<\ep_0\leq 1$ is such that $K \subset \Omega_{\ep_0}$. Then there exists $C_{K,\ep_0}>0$ depending only on $K$, $\nu$ and $\ep_0$ such that for all $0\leq \ep \leq \ep_0$
\begin{align*}
\sup_{\zone,\ztwo \in K} \frac{\abs{v(\zone) - v(\ztwo)}}{\phi(\abs{\zone - \ztwo})} \leq C_{K, \ep_0} \norm[\Lone\cap\Linfty]{g}.
\end{align*}
Therefore the velocity in the interior is log-Lipschitz. 
\item For $\ep = 0$ we have $\lim_{R \to \infty} \sup_{\abs{x} \geq R}\abs{v(x)} = 0$.
\item Let $\ep = 0$ and suppose $f \in C(\Omegabar) $ is such that $\grad \times f = g $ in $\Omega$, $\grad \cdot f = 0 $ in $\Omega$, $f\cdot n = 0 $ on $\partial \Omega$ and we have $\lim_{R \to \infty} \sup_{\abs{x} \geq R}\abs{f(x)} = 0$. Then $f = v$.
\end{enumerate}
\end{lemma}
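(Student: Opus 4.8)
The plan is to reduce every estimate to the pointwise kernel bound $\abs{K_{\Omega_\ep}(\zone,\ztwo)}\lesssim\abs{\zone-\ztwo}^{-1}$ from \lemref{lem:Psiep}(3), which is uniform in $\ep$, together with near/far splittings of integrals of the type \eqref{def:I}. For part (1), I write $\abs{v(x)}\le\int_{\Omega_\ep}\abs{K_{\Omega_\ep}(x,y)}\abs{g(y)}\diff y\lesssim\int\frac{\abs{g(y)}}{\abs{x-y}}\diff y$ and split at radius $1$. On $\cbrac{\abs{x-y}\le1}$ I apply H\"older with exponents $3$ and $\frac{3}{2}$: since $\norm[3]{g}\le\norm[\infty]{g}^{2/3}\norm[1]{g}^{1/3}$ (from $\int\abs{g}^3\le\norm[\infty]{g}^2\norm[1]{g}$) and $\abs{x-\cdot}^{-1}\in L^{3/2}$ locally in the plane, this region contributes $\lesssim\norm[\infty]{g}^{2/3}\norm[1]{g}^{1/3}$; on $\cbrac{\abs{x-y}>1}$ the kernel is $\le1$, giving $\lesssim\norm[1]{g}$. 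This is exactly the stated bound, and the constants depend only on $\nu$, hence are uniform in $\ep$.

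For part (2) with $\ep=0$, set $h=\abs{\zone-\ztwo}$ and split $\Omega$ into a near region $N$ (points within $2h$ of $\zone$ or $\ztwo$) and a far region $F$; note $\Omega$ is convex, so $[\zone,\ztwo]\subset\Omegabar$. On $N$ I use $\abs{K_\Omega(\zone,y)}+\abs{K_\Omega(\ztwo,y)}\lesssim\abs{\zone-y}^{-1}+\abs{\ztwo-y}^{-1}$ to get $\lesssim\norm[\infty]{g}\,h\le\norm[\infty]{g}\phi(h)$. On $F$ I expand the kernel difference by the mean value theorem along $[\zone,\ztwo]$, which requires a gradient bound for $x\mapsto K_\Omega(x,y)$ obtained by differentiating \eqref{eq:Kepzero}. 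This derivative splits into two pieces: a \emph{generic} piece, where differentiating the bracket gives the usual bound $\lesssim\abs{x-y}^{-2}$, and integrating $h\abs{x-y}^{-2}$ over the annulus $h\le\abs{x-y}\le R$ against $\norm[\infty]{g}$ and its tail against $\norm[1]{g}$ produces the log-Lipschitz term $\lesssim\norm[\Lone\cap\Linfty]{g}\,\phi(h)$; and a \emph{corner} piece, coming from differentiating the prefactor $\zonebar^{\frac{1}{\nu}-1}$, which carries the factor $\abs{x}^{\frac{1}{\nu}-2}$ responsible for the term $\abs{\zone-\ztwo}\min\cbrac{\abs{\zone}^{\frac{1}{\nu}-2},\abs{\ztwo}^{\frac{1}{\nu}-2}}$ (this reflects the H\"older continuity of exponent $\frac1\nu-1$ of the conformal factor $\overline{\Psi_z}=\frac{1}{\nu}\zbar^{\frac1\nu-1}$). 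The two consequences then follow: as $h\to0$ both terms vanish, and at the corner because $h\,\abs{z}^{\frac1\nu-2}=\abs{z}^{\frac1\nu-1}\to0$ since $\frac1\nu-1>0$, giving continuity on $\Omegabar$; and on a compact $K\subset\Omega$, where $\abs{z}\ge c_K>0$, the corner factor is bounded and $h\lesssim\phi(h)$, yielding a clean log-Lipschitz bound. Uniformity in $\ep\le\ep_0$ on $K$ holds because $K\subset\Omega_{\ep_0}\subset\Omega_\ep$ with $\mathrm{dist}(K,\partial\Omega_\ep)\ge\mathrm{dist}(K,\partial\Omega_{\ep_0})>0$, so \lemref{lem:Psiep} controls $\Psi_\ep$ and its derivatives uniformly away from the corner.

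For part (3), given $\delta>0$ I choose $\rho$ with $\int_{\abs{y}>\rho}\abs{g}<\delta$, and for $\abs{x}\ge R$ split $y$ at $\abs{y}=R/2$: on $\abs{y}\le R/2$ one has $\abs{x-y}\ge R/2$, contributing $\lesssim R^{-1}\norm[1]{g}$, while on $\abs{y}>R/2$ the argument of part (1) applied to $g\one_{\cbrac{\abs{y}>R/2}}$ gives a bound controlled by $\norm[1]{g\one_{\cbrac{\abs{y}>R/2}}}$, which is small once $R>2\rho$; hence $\sup_{\abs{x}\ge R}\abs{v(x)}\to0$. For part (4), I set $w=f-v$. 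Since $K_\Omega=\grad^\perp_x G_\Omega$, we have $v=\grad^\perp\psi$ with $\psi=\int_\Omega G_\Omega(\cdot,y)g(y)\diff y$, whence $\grad\times v=g$, $\grad\cdot v=0$ and $v\cdot n=-\partial_\tau\psi=0$ on $\partial\Omega$; moreover $v$ is continuous by (2) and decays by (3). Thus $w$ is continuous on $\Omegabar$, divergence- and curl-free, tangent to $\partial\Omega$, and $w\to0$ at infinity. Writing $w=\grad^\perp\chi$ with $\chi$ harmonic, tangency forces $\partial_\tau\chi=0$, so $\chi$ is constant on the connected set $\partial\Omega$, say $\chi\equiv0$. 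Transporting by $\Psi(z)=z^{1/\nu}$, the function $\chi\circ\Psi^{-1}$ is harmonic on $\Hsp$, continuous up to $\Rsp$ and vanishing there; odd Schwarz reflection extends it to an entire harmonic function, and since $\grad\chi$ inherits the decay of $w$ the extension grows sublinearly, so Liouville forces it to be constant, hence $\equiv0$. Therefore $\chi\equiv0$ and $w=\grad^\perp\chi\equiv0$, i.e. $f=v$.

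The main obstacle is the sharp corner term in part (2). A crude triangle-inequality splitting of the product $(\text{prefactor})\times(\text{bracket})$ only yields a factor $\abs{z}^{-1}$ in place of $\abs{z}^{\frac1\nu-2}$; since $\frac1\nu-2\in(-1,0)$ this crude factor is strictly larger near the corner and, fatally, it does not even vanish as $z\to0$, so it fails to give the asserted continuity of $v$ at the corner. Extracting the correct exponent requires carrying out the kernel-gradient estimate while \emph{keeping} the smallness of the reflected part of the kernel near $\partial\Omega$ (equivalently the vanishing of $\Imag(y^{1/\nu})$ there), rather than bounding each factor of \eqref{eq:Kepzero} separately; this cancellation is the delicate heart of the argument.
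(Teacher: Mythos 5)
Parts (1) and (3) of your proposal follow the paper's proof essentially verbatim (the uniform kernel bound $\abs{K_{\Omega_\ep}(\zone,z)}\lesssim\abs{\zone-z}^{-1}$ from \lemref{lem:Psiep}, near/far splitting, $L^3$ interpolation, and a tail splitting of $g$ for the decay). Your part (4) is a valid alternative route: you reflect the harmonic stream function of $w=f-v$ oddly across $\Rsp$ and invoke Liouville, whereas the paper conjugates $\overline{f-v}$ to a holomorphic function $P$ on $\Hsp$, Schwarz-reflects, removes the singularity at the corner, and applies Liouville; both need the same ingredients and either works.

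The genuine gap is in part (2), which is the heart of the lemma. Your entire estimate rests on the claim that the ``corner piece'' of the kernel gradient produces the factor $\abs{\zone-\ztwo}\min\cbrac{\abs{\zone}^{\frac1\nu-2},\abs{\ztwo}^{\frac1\nu-2}}$, and this is asserted rather than proved; indeed your closing paragraph concedes that the naive bound fails and only gestures at a fix. Worse, the route you name cannot deliver the stated factor: since $\frac1\nu-2<0$, one has $\min\cbrac{\abs{\zone}^{\frac1\nu-2},\abs{\ztwo}^{\frac1\nu-2}}=\max\cbrac{\abs{\zone},\abs{\ztwo}}^{\frac1\nu-2}$, whereas the mean value theorem (segment length times the supremum of the gradient over $[\zone,\ztwo]$) yields at best $\abs{\zone-\ztwo}\bigl(\min_{x\in[\zone,\ztwo]}\abs{x}\bigr)^{\frac1\nu-2}$, i.e.\ the \emph{smallest} modulus raised to the negative exponent. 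These differ exactly where it matters: letting $\ztwo\to0$ (the corner belongs to $\Omegabar$, where the estimate is claimed), your bound blows up, while the stated bound behaves like $\abs{\zone}^{\frac1\nu-1}+\phi(\abs{\zone})\to0$; so your argument cannot yield the conclusion that $v$ is continuous on $\Omegabar$, which is part of the lemma and is used later (continuity of $b$ up to the corner in \lemref{lem:transport}, hence \propref{prop:flow}). To rescue your approach you would have to replace the sup bound by the exact line integral and prove $\int_{[\zone,\ztwo]}\abs{x}^{\frac1\nu-2}\abs{\diff x}\lesssim_\nu\abs{\zone-\ztwo}\max\cbrac{\abs{\zone},\abs{\ztwo}}^{\frac1\nu-2}$ (true, because the segment can only approach the corner when its length is comparable to $\max\cbrac{\abs{\zone},\abs{\ztwo}}$), and you would also have to prove, not assert, that the $y$-integral multiplying $\abs{x}^{\frac1\nu-2}$ is $\lesssim(1+\abs{x}^{2-\frac1\nu})\norm[\Lone\cap\Linfty]{g}$, which requires a splitting $\abs{y}\le\abs{x}$ versus $\abs{y}\ge\abs{x}$ that retains the factor $\max\cbrac{\abs{x},\abs{y}}^{\frac1\nu-1}$; the ``cancellation'' you point to is in fact only the elementary reflection inequality $\abs{\bar a-b}\ge\abs{a-b}$ for $a,b\in\Hsp$ and is not the bottleneck. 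The paper sidesteps all of this by never differentiating: it splits $v(\zone)-v(\ztwo)$ algebraically into (prefactor difference)$\times$(integral) plus (prefactor)$\times$(kernel-difference integral), and the discrete difference estimate of \lemref{lem:powers} applied to $\zonebar^{\frac1\nu-1}-\ztwobar^{\frac1\nu-1}$ produces the correct factor $\abs{\zone-\ztwo}\min\cbrac{\abs{\zone}^{\frac1\nu-2},\abs{\ztwo}^{\frac1\nu-2}}$ automatically, the remaining integrals being handled by \lemref{lem:Itwo} and \lemref{lem:Ithree}; the uniform-in-$\ep$ statement on compact sets (which your sketch also compresses to one sentence) is likewise done there by kernel \emph{difference} bounds rather than derivatives.
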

\begin{proof}
We prove each statement individually.
\begin{enumerate}
\item For $\zone \in \Omega_\ep $ we have from  \lemref{lem:Psiep}
\begin{align}\label{eq:vestint}
\begin{split}
\abs{v(\zone)} &  \lesssim  \int_{\Omega_\ep} \abs{K_{\Omega_\ep}(z_1, z)} \abs{g(z)} \diff z \\
& \lesssim \int_{\Omega_\ep} \frac{1}{\abs{\zone - z}}\abs{g(z)} \diff z \\
& \lesssim \int_{B_1(\zone)\cap\Omega_\ep}  \frac{1}{\abs{\zone - z}}\abs{g(z)} \diff z +  \int_{B_1(\zone)^c\cap\Omega_\ep}  \frac{1}{\abs{\zone - z}}\abs{g(z)} \diff z \\
& \lesssim \norm[L^3(B_1(\zone)\cap\Omega_\ep)]{g} + \norm[1]{g} \\
& \lesssim \norm[\infty]{g}^{2/3}\norm[1]{g}^{1/3} + \norm[1]{g}.
\end{split}
\end{align}
Note that all the above estimates are independent of $\ep$. 

\item Consider first the case for $\ep = 0$.  From \eqref{eq:Kepzero} we see that for $\zone \in \Omega$ we have
\begin{align}\label{eq:formv}
v(\zone)   =  \brac{\frac{i}{2\pi\nu}}\zonebar^{\brac{\frac{1}{\nu} - 1}} \int_{\Omega} \sqbrac{\frac{1}{\zonebar^{\frac{1}{\nu}} - \zbar^{\frac{1}{\nu}}} - \frac{1}{\zonebar^{\frac{1}{\nu}} - \z^{\frac{1}{\nu}}   } }g(z) \diff z.
\end{align}
Now using \eqref{eq:formv} and \lemref{lem:powers} we see that for $\zone,\ztwo \in \Omega$ we have
\begingroup
\allowdisplaybreaks
\begin{align*}
& \abs{v(\zone) - v(\ztwo)} \\
& \lesssim \abs*{\zonebar^{\frac{1}{\nu} -1} -  \ztwobar^{\frac{1}{\nu} -1}}\int_{\Omega} \frac{1}{\abs{\zone - z} \max\cbrac{\abs{\zone}^{\frac{1}{\nu} - 1}, \abs{z}^{\frac{1}{\nu} - 1}} }\abs{g(z)} \diff z \\*
& \quad + \abs{\ztwo}^{\frac{1}{\nu} - 1} \int_{\Omega} \frac{\abs*{\zonebar^{\frac{1}{\nu}} - \ztwobar^{\frac{1}{\nu}}} }{\abs*{\zonebar^{\frac{1}{\nu}} - \zbar^{\frac{1}{\nu}}}\abs*{\ztwobar^{\frac{1}{\nu}} - \zbar^{\frac{1}{\nu}}}}\abs{g(z)} \diff z \\
& \lesssim \abs{\zone - \ztwo} \min\cbrac{\abs{\zone}^{\frac{1}{\nu} - 2}, \abs{\ztwo}^{\frac{1}{\nu} - 2}}\norm[\Lone\cap\Linfty]{g} \\*
& \quad +  \abs{\ztwo}^{\frac{1}{\nu} -1}\int_{\Omega} \frac{ \abs{\zone - \ztwo} \max\cbrac{\abs{\zone}^{\frac{1}{\nu} - 1}, \abs{\ztwo}^{\frac{1}{\nu} - 1}} \abs{g(z)}}{ \abs{\zone - \z} \max\cbrac{\abs{\zone}^{\frac{1}{\nu} - 1}, \abs{\z}^{\frac{1}{\nu} - 1}} \abs{\ztwo - \z} \max\cbrac{\abs{\ztwo}^{\frac{1}{\nu} - 1}, \abs{\z}^{\frac{1}{\nu} - 1}}} \diff z \\
& \lesssim \abs{\zone - \ztwo} \min\cbrac{\abs{\zone}^{\frac{1}{\nu} - 2}, \abs{\ztwo}^{\frac{1}{\nu} - 2}}\norm[\Lone\cap\Linfty]{g}  + \abs{\ztwo}^{\frac{1}{\nu} -1}\int_{\Omega} \frac{ \abs{\zone - \ztwo}  \abs{g(z)}}{ \abs{\zone - \z} \abs{\ztwo - \z}\abs{z}^{\frac{1}{\nu} - 1}  } \diff z.
\end{align*}
\endgroup
Now using the notation from \eqref{def:I} we get
\begin{align*}
& \int_{\Omega} \frac{ \abs{\zone - \ztwo}  \abs{g(z)}}{ \abs{\zone - \z} \abs{\ztwo - \z}\abs{z}^{\frac{1}{\nu} - 1}  } \diff z \\
& =  \abs{z_1 - z_2}I((0,\frac{1}{\nu}-1), (z_1,1), (z_2,1): (g\one_{\Omega},0,\infty)).
\end{align*}
Hence using the first estimate of \lemref{lem:Ithree} we see that
\begin{align*}
& \abs{v(\zone) - v(\ztwo)} \\
& \lesssim  \abs{\zone - \ztwo} \min\cbrac{\abs{\zone}^{\frac{1}{\nu} - 2}, \abs{\ztwo}^{\frac{1}{\nu} - 2}}\norm[\Lone\cap\Linfty]{g} \\
& \quad + \abs{\ztwo}^{\frac{1}{\nu} - 1}\norm[\Lone\cap\Linfty]{g} \min\cbrac{\abs{\zone}^{1 -\frac{1}{\nu}},\abs{\ztwo}^{1 -\frac{1}{\nu} } }\phi(\abs{\zone - \ztwo})\\
& \lesssim \abs{\zone - \ztwo}\min\cbrac{\abs{\zone}^{\frac{1}{\nu} -2}, \abs{\ztwo}^{\frac{1}{\nu} -2}} \norm[\Lone\cap\Linfty]{g} +  \phi(\abs{\zone - \ztwo})\norm[\Lone\cap\Linfty]{g}.
\end{align*}

Now let $K \subset \Omega$ be a compact set with $0<\ep_0\leq 1$ such that $K \subset \Omega_{\ep_0}$. Note that in light of the above estimate, we only need to prove the log-Lipschitz nature of the velocity for $0<\ep\leq \ep_0$. Let $K_1 \subset \Hsp$ be the compact set defined by $K_1 = \cbrac{z^\frac{1}{\nu} \suchthat z \in K}$. As $K \subset \Omega_{\ep_0}$, we see that $K_1 \subset B\brac{i\brac*[\Big]{\ep_0 + \frac{1}{2\ep_0}}, \frac{1}{2\ep_0}}$. Hence for all $0<\ep \leq \ep_0$ and $z \in K$ we have
\begin{align*}
\abs*[\big]{z^{\frac{1}{\nu}} - c(\ep)} \approx_{K, \ep_0} \frac{1}{\ep}.
\end{align*}
Hence from \lemref{lem:Psiep}, \eqref{eq:Psizep} and \lemref{lem:powers} we see that for all $0<\ep \leq \ep_0$ and $z_1, z_2 \in K$ we have
\begin{align*}
\abs{(\Psi_\ep)_z(z_1)} & \approx_{K,\ep_0} 1, \\
\abs{(\Psi_\ep)_z(z_1) - (\Psi_\ep)_z(z_2)} & \lesssim_{K,\ep_0} \abs{z_1 - z_2},  \\
\abs{\Psi_\ep(z_1) - \Psi_\ep(z_2)} & \approx_{K,\ep_0} \abs{z_1 - z_2}.
\end{align*}
Similarly using \lemref{lem:Psiep}, \eqref{eq:basicep} and \lemref{lem:powers} we also see that for all $0<\ep \leq \ep_0$, $z_1 \in K$ and $z \in \Omega_\ep$ we have
\begin{align*}
 \frac{1}{\abs{\Psi_\ep(z_1) - \Psi_\ep(z)}} & \approx \ep^2\frac{\abs*[\big]{z_1^{\frac{1}{\nu}} - c(\ep)} \abs*[\big]{z^{\frac{1}{\nu}} - c(\ep)}}{\abs*[\big]{z_1^{\frac{1}{\nu}} - z^{\frac{1}{\nu}}}} \\
& \lesssim_{K,\ep_0} \frac{1}{\abs{z_1 - z}\max\cbrac{\abs{z_1}^{\frac{1}{\nu} - 1}, \abs{z}^{\frac{1}{\nu} - 1}} }  \\
& \lesssim_{K,\ep_0} \frac{1}{\abs{z_1 - z}}.
\end{align*}
Hence from \eqref{eq:Kernel} we see that for all $0<\ep \leq \ep_0$ and $z_1, z_2 \in K$ and $z \in \Omega_\ep$
\begin{align*}
\abs{K_{\Omega_\ep}(z_1, z) - K_{\Omega_\ep}(z_2, z)}  \lesssim_{K,\ep_0} \frac{\abs{z_1 - z_2}}{\abs{z_1 - z}} + \frac{\abs{z_1 - z_2}}{\abs{z_1 - z}\abs{z_2 - z}} .
\end{align*}
Hence using \lemref{lem:Itwo} we see that
\begin{align*}
\abs{v(z_1) - v(z_2)} & \leq \int_{\Omega_\ep} \abs{K_{\Omega_\ep}(z_1, z) - K_{\Omega_\ep}(z_1, z)} \abs{g(z)} \diff \z \\
&  \lesssim_{K,\ep_0} \abs{z_1 - z_2}\norm[\Lone\cap\Linfty]{g} + \phi(\abs{z_1 - z_2})\norm[\Lone\cap\Linfty]{g}.
\end{align*}

\item Let $r>1$ and let $g_1 = g \one_{\cbrac{\abs{x} \leq r}}$ and $g_2 = g - g_1$. Let $v_1(x) = \int_{\Omega}K_{\Omega}(x,y)g_1(y) \diff y$ and $v_2(x) = \int_{\Omega}K_{\Omega}(x,y)g_2(y) \diff y$. For $\zone \in \Omega$ with $\abs{\zone} \geq 2r$ we see from \eqref{eq:vestint} that
\begin{align*}
\abs{v_1(\zone)} \lesssim  \int_{\Omega} \frac{1}{\abs{\zone - z}}\abs{g_1(z)} \diff z \lesssim \frac{1}{r}\norm[1]{g_1} \lesssim \frac{1}{r}\norm[1]{g}.
\end{align*} 
Also from part (1) of this lemma we have
\begin{align*}
\abs{v_2(\zone)}  \lesssim \norm[\infty]{g_2}^{2/3}\norm[1]{g_2}^{1/3} + \norm[1]{g_2}  \lesssim \norm[\infty]{g}^{2/3}\norm[1]{g_2}^{1/3} + \norm[1]{g_2}.
\end{align*}
Hence 
\begin{align*}
\sup_{\abs{x}\geq 2r} \abs{v(x)} \lesssim \frac{1}{r}\norm[1]{g} +  \norm[\infty]{g}^{2/3}\norm[1]{g_2}^{1/3} + \norm[1]{g_2}.
\end{align*}
As $\norm[1]{g_2} \to 0$ as $r\to \infty$, we are done. 

\item As $v(x) = \int_{\Omega}K_{\Omega}(x,y)g(y) \diff y$ and $ K_{\Omega}(x,y) = \grad_x^\perp G_{\Omega}(x,y)$, where $G_{\Omega}$ is the Green's function of $\Omega$, we see that $\grad \cdot v = 0$, $\grad \times v = g$ and $v\cdot n = 0$ on $\partial \Omega$. From part (2) of this lemma we have $v \in C(\Omegabar)$ and from part (3)  we also see that $\lim_{R \to \infty} \sup_{\abs{x} \geq R}\abs{v(x)} = 0$. Hence $v$ satisfies all the properties satisfied by $f$.

Now let $ p = f-v$. As $\grad \cdot p = 0$ and $\grad \times p = 0$ we see that $\myoverline{0}{0}{p}$ is a holomorphic function on $\Omega$. Let $P:\Hsp \to \Csp$ be defined as 
\begin{align*}
P(z) = \myoverline{0}{0}{p}(\Psi^{-1}(z))(\Psi^{-1})_z(z).
\end{align*}
Observe that $P$ is a holomorphic function on $\Hsp$. From \eqref{eq:Psiepzero} we see that for $z\in \Hsp$ we have $(\Psi^{-1})_z(z) = \nu z^{(\nu - 1)}$. Now as $p\cdot n =0$ on $\partial \Omega$ we see that $P$ is real valued on $\Rsp\backslash\cbrac{0}$. Hence by the Schwarz reflection principle, we can extend $P$ to a holomorphic function on $\Csp\backslash\cbrac{0}$. As $p \in C(\Omegabar)$ we see that $\lim_{z\to 0} zP(z) = 0$ and hence $P$ can be extended to a holomorphic function on $\Csp$. As $\lim_{R \to \infty} \sup_{\abs{x} \geq R}\abs{p(x)} = 0$, we see that $P$ is a bounded entire function on $\Csp$ which goes to $0$ at infinity. Hence $P = 0$ and so $p = 0$.  
\end{enumerate}
\end{proof}

Now let $(u_\ep,\w_\ep)$ be a Yudovich weak solution in $\Omega_\ep$ in the time interval $[0,T)$. For this solution, the flow $X_\ep:\Omega_\ep\times [0,T) \to \Omega_\ep$ is defined by \footnote{We see from \lemref{lem:dist} that this map is well defined.}
\begin{align}\label{eq:ODEX}
\frac{\diff X_{\ep}(x,t)}{\diff t} = u_\ep(X_\ep(x,t),t) \qq X_\ep(x,0) = x.
\end{align}
From  \lemref{lem:velocity} part (2) we see that the velocity is log-Lipschitz in the interior of $\Omega_\ep$ and hence this ODE can be solved uniquely as long as $X_\ep(x,t) \in \Omega_\ep$. We first recall the quantities related to the flow for the domain $\Omega_\ep$. We see from \eqref{eq:b} and \eqref{eq:X}  that
\begin{align}\label{eq:Xep}
\frac{\diff X_\ep(x,t)}{\diff t} = b_\ep(X_\ep(x,t),t)\myoverline{0}{0}{(\Psi_\ep)_z}(X_\ep(x,t)),
\end{align}
where $b_\ep:\Omega_\ep\times [0,\infty) \to \Csp$ is defined as
\begin{align}\label{eq:bep}
b_\ep(\zone,t)  :=  \brac*[\Big]{\frac{i}{2\pi}} \int_{\Omega_\ep}\sqbrac{\frac{1}{\Psibar_\ep(z_1) - \Psibar_\ep(z)} - \frac{1}{\Psibar_\ep(z_1) - \Psi_\ep(z)}}\w_\ep(z,t) \diff z.
\end{align}
Similarly we define the flow $Y_\ep:\Hsp\times[0,T) \to \Hsp $ as $Y_\ep(y,t) := \Psi_\ep(X_\ep(x,t))$, where $y = \Psi_\ep(x)$. Similarly define $\btil_\ep:\Hsp \times [0,\infty) \to \Csp$ as $\btil_\ep(y,t) := b_\ep(x,t)$. Hence from \eqref{eq:Y} we have
\begin{align}\label{eq:Yep}
\frac{\diff Y_\ep(y,t)}{\diff t} = \btil_\ep(Y_\ep(y,t),t)\abs{((\Psi_\ep)_z \compose \Psi_\ep^{-1}) (Y_\ep(y,t))}^{2}.
\end{align}
Defining $\wtil_\ep:\Hsp\times[0,\infty) \to \Rsp$ as $\wtil_\ep(s,t) := \w_\ep(z,t)$, where $s = \Psi_\ep(z)$, we get from \eqref{eq:btil} 
\begin{align}\label{eq:btilep}
\btil_\ep(y,t) =  \brac*[\Big]{\frac{i}{2\pi}} \int_{\Hsp}\sqbrac{\frac{1}{\ybar - \sbar} - \frac{1}{\ybar - s}}\wtil_\ep(s,t) \abs{((\Psi_\ep)_z \compose \Psi_\ep^{-1}) (s)}^{-2} \diff s.
\end{align}

We now show that the flow $X_\ep$ always remains in the domain $\Omega_\ep$ and hence the maps $X_\ep:\Omega_\ep\times [0,T) \to \Omega_\ep$ and $Y_\ep:\Hsp\times[0,T) \to \Hsp $ are well defined. The following lemma is analogous to similar statements proven in  \cite{La15, LaZl19,HaZl21}. 

\begin{lemma}\label{lem:dist}
Let $0\leq \ep\leq 1$ and let $(u_\ep,\w_\ep)$ be a Yudovich weak solution in the domain $\Omega_\ep$ in the time interval $[0,T)$ with initial vorticity $\w_0 \in \Lone(\Omega_\ep)\cap\Linfty(\Omega_\ep)$. Let $R>0$ and let $x_0\in \Omega_\ep$ with $\abs{x_0} \leq R$. Then there exists constants $c, C_1,C_2,C_3,C_4>0$ and $0<\ep_0\leq 1$ all depending only on  $R,T$ and  $\esssup_{t\in [0,T)}\norm[\Lone\cap\Linfty]{\w_\ep(\cdot,t)}$ so that if $0 \leq \ep \leq \ep_0$ then
\begin{align*}
 C_1\cbrac{\Imag(Y_\ep(y_0,0))}^{e^{ct}} \leq \Imag(Y_\ep(y_0,t))   \leq  C_2\cbrac{\Imag(Y_\ep(y_0,0))}^{e^{-ct}}
\end{align*}
and also
\begin{align*}
 C_3 d(X_\ep(x_0,0), \partial \Omega_\ep)^{\frac{1}{\nu} e^{ct}} \leq d(X_\ep(x_0,t), \partial \Omega_\ep) \leq C_4 d(X_\ep(x_0,0), \partial \Omega_\ep)^{\nu e^{-ct}}.
\end{align*} 
\end{lemma}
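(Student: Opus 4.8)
The plan is to control the imaginary part $\eta(t):=\Imag\brac{Y_\ep(y_0,t)}$ (where $y_0=\Psi_\ep(x_0)$) by a Gronwall-type differential inequality, integrate it to obtain the bounds on $\Imag Y_\ep$, and then transport these through the conformal map to get the distance bounds. Since the factor $\abs{(\Psi_\ep)_z\circ\Psi_\ep^{-1}(Y_\ep)}^2$ in \eqref{eq:Yep} is real and positive, taking imaginary parts gives $\frac{\diff\eta}{\diff t}=\Imag\brac{\btil_\ep(Y_\ep,t)}\,\abs{(\Psi_\ep)_z\circ\Psi_\ep^{-1}(Y_\ep)}^2$ on the maximal interval on which $X_\ep(x_0,\cdot)$ stays in the interior of $\Omega_\ep$. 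Because $\abs{x_0}\le R$ and the velocity is bounded by \lemref{lem:velocity}(1), $X_\ep(x_0,\cdot)$ remains in a bounded subset of $\Omega_\ep$, so by \lemref{lem:Psiep} the quantity $\eta(t)$ stays below a constant $M$ (depending only on the quantities in the statement) and $\abs{(\Psi_\ep)_z\circ\Psi_\ep^{-1}(Y_\ep)}^2\lesssim1$. The target is the bound $\abs{\frac{\diff}{\diff t}\log\eta}\lesssim\log(1/\eta)+1$, with implicit constant uniform for $0\le\ep\le\ep_0$; once this holds, the resulting lower bound on $\eta$ shows the flow never reaches $\partial\Omega_\ep$, so the maximal interval is all of $[0,T)$.

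The crux is a log-Lipschitz estimate for the normal velocity, namely $\abs{\Imag\btil_\ep(y,t)}\lesssim\eta\,\log(1/\eta)\,\norm[\Lone\cap\Linfty]{\w_\ep(\cdot,t)}$ for small $\eta$. Writing $y=p+i\eta$ and $s=\sigma+i\tau$ and setting $a=p-\sigma$, taking imaginary parts in \eqref{eq:btilep} (using $\Imag(iw)=\Real w$ and that the density $\wtil_\ep\,\abs{(\Psi_\ep)_z\circ\Psi_\ep^{-1}}^{-2}$ is real) leads to
\[
\Real\sqbrac{\frac{1}{\ybar-\sbar}-\frac{1}{\ybar-s}}=\frac{4a\tau\eta}{\brac{a^2+(\tau-\eta)^2}\brac{a^2+(\tau+\eta)^2}}.
\]
The explicit factor $\eta$ supplies the linear decay, integrating the kernel in $a$ produces $\int_\Rsp\frac{4\abs a\tau\eta}{(a^2+(\tau-\eta)^2)(a^2+(\tau+\eta)^2)}\diff a\approx\log\frac{\tau+\eta}{\abs{\tau-\eta}}$, and the subsequent $\tau$-integration against the density yields the logarithm $\log(1/\eta)$. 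The main obstacle is that the density carries the conformal weight $\abs{(\Psi_\ep)_z\circ\Psi_\ep^{-1}(s)}^{-2}$, which is unbounded near the image of the corner; this is handled using $\abs{(\Psi_\ep)_z\circ\Psi_\ep^{-1}(s)}^{-2}\lesssim\abs{s+i\ep}^{2\nu-2}$ from \lemref{lem:Psiep}(4) together with the separate $\Lone$ and $\Linfty$ bounds on $\w_\ep$, splitting the region into $\tau\lesssim\eta$ and $\tau\gtrsim\eta$ and checking that the corner contribution preserves the $\eta\log(1/\eta)$ scaling uniformly for $\ep\le\ep_0$. Multiplying by $\abs{(\Psi_\ep)_z\circ\Psi_\ep^{-1}(Y_\ep)}^2\lesssim1$ gives $\abs{\frac{\diff\eta}{\diff t}}\lesssim\eta\log(1/\eta)$ for small $\eta$, while for $\eta$ bounded below \lemref{lem:velocity}(1) gives $\abs{\frac{\diff\eta}{\diff t}}\lesssim1$, so the differential inequality holds throughout.

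Setting $L=\log\eta$, the inequality becomes $\abs{L'}\le c(\abs L+1)$, i.e. on $\cbrac{L<0}$ we have $cL-c\le L'\le -cL+c$. Integrating these two linear differential inequalities with the integrating factors $e^{-ct}$ and $e^{ct}$ respectively, and exponentiating, yields after absorbing the additive constants
\[
C_1\cbrac{\Imag\brac{Y_\ep(y_0,0)}}^{e^{ct}}\le\Imag\brac{Y_\ep(y_0,t)}\le C_2\cbrac{\Imag\brac{Y_\ep(y_0,0)}}^{e^{-ct}},
\]
which is the first assertion.

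Finally I convert between $\eta=\Imag\Psi_\ep(X_\ep)$ and $d(X_\ep,\partial\Omega_\ep)$. For $\ep=0$, writing $z=re^{i\theta}\in\Omega$ and using $\Psi(z)=z^{1/\nu}$ together with the elementary comparison $\sin(\theta/\nu)\approx_\nu\min\cbrac{\sin\theta,\sin(\nu\pi-\theta)}$, one gets the scale-covariant relation $\Imag\Psi(z)\approx_\nu r^{1/\nu-1}\,d(z,\partial\Omega)$; since $r=\abs z\le M$ on the relevant region (and $d\le r$), this produces the two-sided comparison $d(z,\partial\Omega)^{1/\nu}\lesssim\Imag\Psi(z)\lesssim d(z,\partial\Omega)$ with constants depending only on $R,T,\nu$, and \lemref{lem:Psiep} yields the same uniformly for $0<\ep\le\ep_0$. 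Inserting the first assertion and applying these comparisons at times $0$ and $t$ (with $\eta_0=\eta(0)$ and $d_0=d(X_\ep(x_0,0),\partial\Omega_\ep)$), the lower bound combines $d(t)\gtrsim\eta(t)\gtrsim\eta_0^{e^{ct}}\gtrsim d_0^{\frac1\nu e^{ct}}$ while the upper bound combines $d(t)\lesssim\eta(t)^\nu\lesssim\eta_0^{\nu e^{-ct}}\lesssim d_0^{\nu e^{-ct}}$, which gives
\[
C_3\, d(X_\ep(x_0,0),\partial\Omega_\ep)^{\frac1\nu e^{ct}}\le d(X_\ep(x_0,t),\partial\Omega_\ep)\le C_4\, d(X_\ep(x_0,0),\partial\Omega_\ep)^{\nu e^{-ct}},
\]
possibly enlarging the constants (which may depend on $T$ through the worst case $t\to T$).
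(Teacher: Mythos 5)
Your overall architecture coincides with the paper's: a differential inequality for $\Imag Y_\ep$ obtained by estimating $\Imag \btil_\ep$, integration via \lemref{lem:phi} to get the double-exponential bounds, and then a conformal comparison between $\Imag \Psi_\ep$ and $d(\cdot,\partial\Omega_\ep)$ (your two-sided relation $d^{1/\nu}\lesssim \Imag\Psi_\ep\lesssim d$ and the final combination at times $0$ and $t$ are exactly what the paper does). Your closed-form computation of the real part of the kernel is also correct, and is equivalent to the paper's device of adding and subtracting the terms $\frac{1}{\Real(\ybar)-\sbar}-\frac{1}{\Real(\ybar)-s}$, whose real part vanishes.

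However, your crux estimate is false, and this is a genuine gap. The bound $\abs{\Imag\btil_\ep(y,t)}\lesssim \eta\log(1/\eta)\,\norm[\Lone\cap\Linfty]{\w_\ep(\cdot,t)}$ cannot hold uniformly in $y$: take $\ep=0$, $\wtil(\cdot,t)\equiv 1$ on $\Hsp\cap B_1(0)\cap\cbrac{\Real s>0}$ (bounded, non-negative, one-sided vorticity -- exactly the setting of the paper), and $y=i\eta$, so that $a=-\sigma$ has a fixed sign on the support and there is no cancellation in your kernel. Rescaling $s=\eta s'$, the annulus $\abs{s}\approx\eta$ alone contributes
\begin{align*}
\int_{\abs{s}\approx\eta}\frac{4\abs{a}\tau\eta}{\brac{a^2+(\tau-\eta)^2}\brac{a^2+(\tau+\eta)^2}}\abs{s}^{2\nu-2}\diff s \approx \eta\cdot\eta^{2\nu-2}=\eta^{2\nu-1},
\end{align*}
and since $2\nu-1<1$ we have $\eta^{2\nu-1}\gg\eta\log(1/\eta)$ as $\eta\to 0$; no splitting into $\tau\lesssim\eta$ and $\tau\gtrsim\eta$ can remove this. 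The correct statement -- the paper's \eqref{eq:ImY} -- necessarily retains the singular corner weight: $\abs{\Imag\btil_\ep(y,t)}\lesssim\norm[\infty]{\w_\ep(\cdot,t)}\brac{1+\abs{y+i\ep}^{2\nu-2}}\phi(\abs{\Imag(y)})$. The compensation then comes not from $\Imag\btil_\ep$ but from the conformal factor, which \emph{vanishes} at the corner, $\abs{(\Psi_\ep)_z\compose\Psi_\ep^{-1}(Y_\ep)}^2\lesssim\abs{Y_\ep+i\ep}^{2-2\nu}$; the product of the two singular factors is bounded on bounded sets, and one still lands on $\abs{\frac{\diff}{\diff t}\Imag Y_\ep}\lesssim\phi(\Imag Y_\ep)$. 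Your proposal instead bounds the conformal factor crudely by a constant, which discards precisely the vanishing needed for this cancellation: your two lossy steps happen to combine to the right final inequality, but the intermediate claim they rest on is unprovable, so the argument as written does not close.
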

\begin{proof}
In this proof we will let $C>0$ denote a general constant which depends on $R,T$ and on $\esssup_{t\in [0,T)}\norm[\Lone\cap\Linfty]{\w_\ep(\cdot,t)}$ and we write $a \lesssim_C b$ instead of $a\leq C b$. 

We will first prove the estimate for $Y_\ep(y,t)$ and then translate that information into an estimate for $X_\ep(x,t)$. Let $y_0 = \Psi_\ep(x_0)$. Now as $u_\ep$ is bounded from \lemref{lem:velocity} we see from \eqref{eq:ODEX} that there exists $R_1>1$ depending only on $R, T$ and $ \esssup_{t\in [0,T)}\norm[\Lone\cap\Linfty]{\w_\ep(\cdot,t)}$ so that for all $t\in[0,T)$ and $0\leq \ep \leq 1$ we have
\begin{align}\label{eq:boundXepRone}
\abs{X_\ep(x_0,t)} \leq \frac{R_1}{2} \leq R_1.
\end{align}
Now we choose a $0<\ep_0\leq 1$ such that $\ep_0 R_1^{\frac{1}{\nu}} \leq 1/8$. Hence from \eqref{eq:Psiep} and \eqref{eq:Psiepzero} it is clear that if $0 \leq \ep \leq \ep_0$, then fo $t \in [0,T)$ we have
\begin{align}\label{eq:Yeplater}
\abs{Y_\ep(y_0,t)} = \abs{\Psi_\ep(X_\ep(x_0,t))}   \leq 4R_1^{\frac{1}{\nu}}.
\end{align}
Therefore $\ep\abs{Y_\ep(y_0,t)} \leq 1/2$. 

Now for $y\in \Hsp$ we see from \eqref{eq:btilep} that
\begin{align*}
\btil_\ep(y,t) =  \brac*[\Big]{\frac{i}{2\pi}} \int_{\Hsp}\sqbrac{\frac{1}{\ybar - \sbar} - \frac{1}{\ybar - s}}\wtil_\ep(s,t) \abs{((\Psi_\ep)_z \compose \Psi_\ep^{-1}) (s)}^{-2} \diff s.
\end{align*}
Therefore
\begin{align*}
& \Imag(\btil_\ep(y,t)) \\
& =  \brac*[\Big]{\frac{1}{2\pi}} \Real\cbrac{\int_{\Hsp}\sqbrac{\frac{1}{\ybar - \sbar} - \frac{1}{\Real(\ybar) - \sbar} + \frac{1}{\Real(\ybar) - s}  - \frac{1}{\ybar - s}}\wtil_\ep(s,t) \abs{((\Psi_\ep)_z \compose \Psi_\ep^{-1}) (s)}^{-2} \diff s}.
\end{align*}
Hence using part 4 of \lemref{lem:Psiep} we see that 
\begin{align*}
 \abs*{ \Imag(\btil_\ep(y,t))} & \lesssim \abs{\Imag(y)}\int_{\Hsp} \frac{\abs{\wtil_\ep(s,t)} \abs{s+i \ep}^{2\nu -2}}{\abs{y -s}\abs{\Real(y) - s}} \diff s \\
 & \lesssim \norm[\infty]{\w_\ep(\cdot,t)} \int_{\Rsp^2} \frac{\abs{\Imag(y)}}{\abs{(y+ i\ep) - s}\abs{(\Real(y) + i\ep) - s}\abs{s}^{2- 2\nu}} \diff s. 
\end{align*}
Observe that if we let $z_1 = y+i\ep$ and $z_2 = \Real(y) + i\ep$, then $\abs{\Imag(y)} = \abs{z_1 - z_2}$. Hence using the definition of $I$ from \eqref{def:I} we see that
\begin{align*}
\abs*{ \Imag(\btil_\ep(y,t))}  \lesssim \norm[\infty]{\w_\ep(\cdot,t)} \abs{z_1 - z_2}I((0,2-2\nu), (z_1,1), (z_2,1): (1,0,\infty)).
\end{align*}
Thus using the second estimate of \lemref{lem:Ithree} and observing that $\abs{y+i\ep} \geq \abs{\Real(y) + i\ep}$ we obtain
\begin{align*}
 \abs*{ \Imag(\btil_\ep(y,t))}  \lesssim \norm[\infty]{\w_\ep(\cdot,t)}   \brac{1 + \abs{y + i\ep}^{2\nu -2}}\phi(\abs{\Imag(y)}).
\end{align*}
Now from \eqref{eq:Yep} we get 
\begin{align*}
\frac{\diff \Imag\cbrac{Y_\ep(y_0,t)}}{\diff t} = \Imag\cbrac*[\big]{\btil_\ep(Y_\ep(y_0,t),t)}\abs{((\Psi_\ep)_z \compose \Psi_\ep^{-1}) (Y_\ep(y,t))}^{2}.
\end{align*}
Now as $\ep \abs{Y_\ep(y_0,t),t)} \leq 1/2$, we see from \eqref{eq:temp23} and \eqref{eq:Psizepzero} that
\begin{align*}
\abs{((\Psi_\ep)_z \compose \Psi_\ep^{-1}) (Y_\ep(y,t))}^{2} \lesssim \abs{Y_\ep(y_0,t) + i\ep}^{2-2\nu}.
\end{align*}
Hence
\begin{align}\label{eq:ImY}
\begin{split}
& \abs{\frac{\diff \Imag\cbrac{Y_\ep(y_0,t)}}{\diff t}} \\
& \lesssim \norm[\infty]{\w_\ep(\cdot,t)}  \phi(\abs{\Imag(Y_\ep(y_0,t))})\brac{1 + \abs{Y_\ep(y_0,t) + i\ep}^{2\nu -2}} \abs{Y_\ep(y_0,t) + i\ep}^{2-2\nu}  \\
& \lesssim \norm[\infty]{\w_\ep(\cdot,t)} \phi(\abs{\Imag(Y_\ep(y_0,t))})\brac{1 + \abs{Y_\ep(y_0,t) + i\ep}^{2 - 2\nu}}.
\end{split}
\end{align}
Now using  \eqref{eq:Yeplater} we get
\begin{align*}
\abs{\frac{\diff \Imag\cbrac{Y_\ep(y_0,t)}}{\diff t}} \lesssim_{C} \phi(\abs{\Imag(Y_\ep(y_0,t))}).
\end{align*}
Consequently from \lemref{lem:phi} there exists $c = c(R,T,\esssup_{t\in [0,T)}\norm[\Lone\cap\Linfty]{\w_\ep(\cdot,t)}) >0$ such that for all $t\in[0,T)$ we have
\begin{align}\label{eq:ImYest}
 \cbrac{\Imag(Y_\ep(y_0,0))}^{e^{ct}} \lesssim_{C} \Imag(Y_\ep(y_0,t))  \lesssim_{C}  \cbrac{\Imag(Y_\ep(y_0,0))}^{e^{-ct}}.
\end{align}
This proves the first part of the lemma. 

Now let $\tilde{x} \in \partial\Omega_\ep$ be such that $d(X_\ep(x_0,t), \partial\Omega_\ep) = d(X_\ep(x_0,t), \tilde{x})$. As the line segment joining the origin and $X_\ep(x_0,t)$ intersects $\partial \Omega_\ep$, we see that $\abs{\tilde{x}} \leq 2 \abs{X_\ep(x_0,t)}$. In particular we have $\abs{\tilde{x}} \leq R_1$ from \eqref{eq:boundXepRone}. Hence by the same argument used to show \eqref{eq:Yeplater}, we also see that $\abs{\Psi_\ep(\tilde{x})} \leq 4R_1^{\frac{1}{\nu}}$ and hence $\ep \abs{\Psi_\ep(\tilde{x})} \leq 1/2$. 

So now consider $z \in \Hsp$ with $\abs{\ep z} \leq 1/2$. We claim that for such $z $ we have
\begin{align}\label{eq:mappingbasicep1}
\abs{\frac{z}{1 - i\ep z} + i\ep} \approx \abs{z + i\ep}.
\end{align}
To see this consider first the case of $\abs{z} \geq 100 \ep$. Here it is clear that
\begin{align*}
\abs{\frac{z}{1 - i\ep z} + i\ep} \approx \abs{z} \approx \abs{z + i\ep}.
\end{align*}
Now if $\abs{z} \leq 100\ep$, then clearly 
\begin{align*}
\abs{\frac{z}{1 - i\ep z} + i\ep} \lesssim \ep \approx \abs{z + i\ep}.
\end{align*}
On the other hand we know that the map \eqref{eq:mapbasic} maps the upper half plane to the ball $B\brac{i\brac*[\Big]{\ep + \frac{1}{2\ep}}, \frac{1}{2\ep}}$. Hence 
\begin{align*}
\ep \leq \abs{\frac{z}{1 - i\ep z} + i\ep}.
\end{align*}
This proves the claim \eqref{eq:mappingbasicep1}. Also if $z_1, z_2 \in \Hsp$ are such that $\abs{\ep z_1}, \abs{\ep z_2} \leq 1/2$, then we see that
\begin{align}\label{eq:mappingbasicep2}
\abs{ \sqbrac{c(\ep) + \frac{1}{\ep(i + \ep\z_1)}} -  \sqbrac{c(\ep) + \frac{1}{\ep(i + \ep\z_2)}}} \approx \abs{z_1 - z_2}.
\end{align}

Therefore using \eqref{eq:Psiinvep}, \lemref{lem:powers}, \eqref{eq:mappingbasicep2} and \eqref{eq:mappingbasicep1} we get
\begin{align*}
& d(X_\ep(x_0,t), \partial\Omega_\ep) \\
& = \min_{\ep\abs{s} \leq \frac{1}{2}, s \in \Rsp}\abs{\Psi_\ep^{-1}(Y_\ep(y_0,t)) - \Psi_\ep^{-1}(s) } \\
& \approx  \min_{\ep\abs{s} \leq \frac{1}{2}, s \in \Rsp} \abs{Y_\ep(y_0,t) - s} \min\cbrac{\abs{\frac{Y_\ep(y_0,t)}{1 - i\ep Y_\ep(y_0,t)} + i\ep}^{\nu -1}, \abs{\frac{s}{1 - i\ep s} + i\ep}^{\nu -1} } \\
& \approx  \min_{\ep\abs{s} \leq \frac{1}{2}, s \in \Rsp}\abs{(Y_\ep(y_0,t) - s}\abs{(Y_\ep(y_0,t) + i\ep)}^{\nu-1} \\
& \approx  \abs{(Y_\ep(y_0,t) + i\ep)^\nu - \cbrac{\Real(Y_\ep(y_0,t)) + i\ep}^\nu }.
\end{align*}
Furthermore we see from \lemref{lem:powers} that
\begin{align*}
\cbrac{\Imag(Y_\ep(y_0,t))}\abs{Y_\ep(y_0,t) + i\ep}^{\nu -1}  \lesssim_{C} d(X_\ep(x_0,t), \partial\Omega_\ep)  \lesssim_{C} \cbrac{\Imag(Y_\ep(y_0,t))}^\nu.
\end{align*}
Hence
\begin{align*}
\cbrac{\Imag(Y_\ep(y_0,t))}   \lesssim_{C} d(X_\ep(x_0,t), \partial\Omega_\ep)  \lesssim_{C} \cbrac{\Imag(Y_\ep(y_0,t))}^\nu.
\end{align*}
In particular for $t=0$ we have
\begin{align*}
\cbrac{\Imag(Y_\ep(y_0,0))}   \lesssim_{C} d(X_\ep(x_0,0), \partial\Omega_\ep)  \lesssim_{C} \cbrac{\Imag(Y_\ep(y_0,0))}^\nu.
\end{align*}
Combing these two estimates with \eqref{eq:ImYest} we obtain for all $t\in[0,T)$ 
\begin{align*}
 d(X_\ep(x_0,0), \partial \Omega_\ep)^{\frac{1}{\nu} e^{ct}} \lesssim_{C} d(X_\ep(x_0,t), \partial \Omega_\ep)  \lesssim_{C} d(X_\ep(x_0,0), \partial \Omega_\ep)^{\nu e^{-ct}}.
\end{align*}
\end{proof}

We are now ready to prove the existence of Yudovich weak solutions in $\Omega$. 
\begin{thm}\label{thm:weak}
Consider an initial data $(u_0,\w_0)$ satisfying \eqref{eq:initial} in the domain $\Omega$. Then there exists a Yudovich weak solution $(u,\w)$ in domain $\Omega$ in the time interval $[0,\infty)$ in the sense of \eqref{eq:weak} and \eqref{eq:divweak}.  
\end{thm}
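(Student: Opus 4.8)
The plan is to follow the classical Yudovich / Majda--Bertozzi scheme adapted to the corner geometry: solve the problem on the smooth bounded approximations $\Omega_\ep$, collect estimates that are \emph{uniform} in $\ep$, and pass to the limit $\ep\to0$. For $0<\ep\le1$ the domain $\Omega_\ep$ is smooth and bounded (hence $C^{1,1}$), so the classical theory applies directly: with the approximate initial vorticity $\w_{0,\ep}:=\w_0\one_{\Omega_\ep}$ and $u_{0,\ep}$ its Biot--Savart velocity on $\Omega_\ep$, there is a global weak solution $(u_\ep,\w_\ep)$ on $\Omega_\ep$ whose flow $X_\ep$ (well defined by \lemref{lem:dist}) is an area-preserving homeomorphism, so that $\w_\ep(\cdot,t)=\w_{0,\ep}\circ X_\ep^{-1}(\cdot,t)$ and consequently $\norm[\Lone\cap\Linfty]{\w_\ep(\cdot,t)}=\norm[\Lone\cap\Linfty]{\w_{0,\ep}}\le\norm[\Lone\cap\Linfty]{\w_0}$ for all $t$. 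This conservation law is the only place classical theory is invoked; everything afterward is made uniform in $\ep$.

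Next I would assemble the uniform estimates and extract convergent subsequences. Part (1) of \lemref{lem:velocity} gives $\norm[\infty]{u_\ep(\cdot,t)}\lesssim\norm[\Lone\cap\Linfty]{\w_0}$ with constant independent of $\ep$. Fixing $T<\infty$ and a compact $K\subset\Omega$, part (2) of \lemref{lem:velocity} supplies a log--Lipschitz bound for $u_\ep(\cdot,t)$ on $K$ with constant $C_{K,\ep_0}$ uniform for $0\le\ep\le\ep_0$, so $\{u_\ep\}$ is equicontinuous in space on $K$. Time equicontinuity follows by the standard estimate: writing $u_\ep(x,t)-u_\ep(x,s)=\int K_{\Omega_\ep}(x,y)\bigl(\w_\ep(y,t)-\w_\ep(y,s)\bigr)\diff y$ and transporting $\w_\ep$ along the bounded-speed flow $X_\ep$ yields a modulus of continuity in $t$ that is uniform in $\ep$ on $K\times[0,T]$. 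By Arzel\`a--Ascoli, along a subsequence $u_\ep\to u$ locally uniformly on $\Omega\times[0,T)$, while $\w_\ep\stackrel{*}{\rightharpoonup}\w$ in $\Linfty([0,T);\Linfty(\Omega))$ with a uniform bound in $\Linfty([0,T);\Lone(\Omega))$, and $\w_{0,\ep}\to\w_0$ in $\Lone(\Omega)$.

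With these convergences I would pass to the limit in \eqref{eq:weak}. Fix $\varphi\in C^\infty_c(\Omega\times[0,T))$; its spatial support lies in some $\Omega_{\ep_0}$, so for $\ep\le\ep_0$ the identity \eqref{eq:weak} holds for $(u_\ep,\w_\ep)$ with this $\varphi$. The term $\int\!\!\int\w_\ep\,\pt\varphi$ and the initial term $\int\w_{0,\ep}\varphi(\cdot,0)$ pass to the limit by weak-$*$ convergence and by $\w_{0,\ep}\to\w_0$ in $\Lone$. The genuine difficulty is the nonlinear term $\int\!\!\int\w_\ep\,u_\ep\cdot\grad\varphi$; here I would exploit that $u_\ep\to u$ \emph{strongly} (locally uniformly) on the compact support of $\grad\varphi$ while $\w_\ep\stackrel{*}{\rightharpoonup}\w$, so that $\w_\ep\,(u_\ep\cdot\grad\varphi)$ converges against the constant test weight and the term tends to $\int\!\!\int\w\,u\cdot\grad\varphi$. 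I expect this nonlinear passage to be the main obstacle, and the crucial input that makes it work is precisely the strong local convergence of the velocities delivered by the uniform log--Lipschitz bound of \lemref{lem:velocity}(2); weak convergence of the vorticities alone would not suffice.

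Finally I would identify $u$ and verify the remaining Yudovich-class properties. For fixed $x,t$, split $u_\ep(x,t)=\int_{\Omega_\ep}K_{\Omega_\ep}(x,y)\w_\ep(y,t)\diff y$ into $B_R(x)$ and its complement: on the far region the uniform bound $\abs{K_{\Omega_\ep}}\lesssim\abs{x-y}^{-1}$ from \lemref{lem:Psiep}(3) together with $\norm[1]{\w_\ep}\le\norm[1]{\w_0}$ gives a contribution $\lesssim\norm[1]{\w_0}/R$, uniformly small; on $B_R(x)$ the pointwise convergence $K_{\Omega_\ep}(x,\cdot)\to K_\Omega(x,\cdot)$ (compare \eqref{eq:Kernel}, \eqref{eq:Kepzero}) with the same domination yields $\Lone(B_R)$ convergence of the kernel, which combines with $\w_\ep\stackrel{*}{\rightharpoonup}\w$ to give $\int_{B_R}K_{\Omega_\ep}\w_\ep\to\int_{B_R}K_\Omega\w$. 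Letting $R\to\infty$ shows $u(x,t)=\int_\Omega K_\Omega(x,y)\w(y,t)\diff y$, i.e.\ $u$ is the Biot--Savart velocity of $\w$ on $\Omega$. Parts (1)--(4) of \lemref{lem:velocity} then give, for a.e.\ $t$, that $u(\cdot,t)\in C(\Omegabar)$ is bounded, decays at infinity, and satisfies $\grad\cdot u=0$, $\grad\times u=\w$, and $u\cdot n=0$ on $\partial\Omega$; in particular the membership $u\in\Linfty_{loc}([0,T);\Ltwo_{loc}(\Omegabar))$ and the divergence condition \eqref{eq:divweak} of the Yudovich class \eqref{eq:Yudovich} hold. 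Since every bound above is uniform on $[0,T)$ for arbitrary $T$ and none degenerates in time, a diagonal argument as $T\to\infty$ produces a Yudovich weak solution on $[0,\infty)$.
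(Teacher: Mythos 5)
Your proposal takes a genuinely different route from the paper. You follow the Eulerian Majda--Bertozzi scheme: compactness (Arzel\`a--Ascoli) for the \emph{velocities} $u_\ep$, only weak-$*$ convergence for the vorticities, and a weak--strong pairing to pass to the limit in the nonlinear term. The paper instead runs a Lagrangian argument: it proves uniform H\"older-type equicontinuity of the \emph{flow maps} $X_\ep,X_\ep^{-1}$ on compact sets (using \lemref{lem:velocity}, \lemref{lem:dist}, \lemref{lem:phi}), extracts a limiting measure-preserving homeomorphism $X$, defines $\w(\cdot,t):=\w_0(X^{-1}(\cdot,t))$ and $u:=\int K_\Omega(\cdot,y)\w(y,t)\diff y$, and then proves \emph{strong} $\Lone(\Omega)$ convergence $\w_\ep(\cdot,t)\to\w(\cdot,t)$ for each $t$ together with pointwise a.e.\ convergence of $u_\ep$. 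With strong $\Lone$ convergence of the vorticities, the nonlinear term passes to the limit by dominated convergence, with no duality pairing needed. The Lagrangian route buys two things that your route must work for: flow maps are trivially uniformly Lipschitz in time (the velocity is uniformly bounded), and the limit vorticity comes with its transport structure already attached.

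This points to the two genuine gaps in your argument. First, your time equicontinuity of $u_\ep$, uniform in $\ep$, is asserted as ``the standard estimate,'' but it is not standard here: after transporting along the flow one must estimate $\int\abs{K_{\Omega_\ep}(x,X_\ep(z,t))-K_{\Omega_\ep}(x,X_\ep(z,s))}\abs{\w_0(z)}\diff z$, which requires regularity of the kernels $K_{\Omega_\ep}$ in the \emph{second} variable (e.g.\ a bound of the type $\abs{\grad_y K_{\Omega_\ep}(x,y)}\lesssim \abs{x-y}^{-2}$ uniformly in $\ep$, valid up to the corner). In $\Rsp^2$ this is immediate from translation invariance, $K(x,y)=K(x-y)$; here it is not, and none of the paper's lemmas provide it --- \lemref{lem:velocity}(2) controls the dependence on the first variable only. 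Such estimates are very plausibly provable from \eqref{eq:Kernel}, \lemref{lem:Psiep} and \lemref{lem:powers}, but as written this is a missing ingredient, not a citation to a known fact. Second, your identification step conflates two different weak-$*$ convergences: you extract $\w_\ep\rightharpoonup^*\w$ in $\Linfty(\Omega\times(0,T))$, but then identify $u(x,t)=\int_\Omega K_\Omega(x,y)\w(y,t)\diff y$ at a \emph{fixed} time $t$ by pairing $K_{\Omega_\ep}(x,\cdot)\to K_\Omega(x,\cdot)$ in $\Lone(B_R)$ with ``$\w_\ep\rightharpoonup^*\w$.'' Weak-$*$ convergence in space-time does not give weak-$*$ convergence of the time slices $\w_\ep(\cdot,t)$ along a single subsequence; you need an additional argument (e.g.\ $\pt\w_\ep=-\grad\cdot(u_\ep\w_\ep)$ bounded in a negative Sobolev space, hence equicontinuity of $t\mapsto\w_\ep(\cdot,t)$ into bounded sets of $\Linfty$ with the weak-$*$ topology, Aubin--Lions style) before the slice-wise pairing is legitimate. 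Both gaps are repairable, but both are exactly the technical points the paper's flow-map construction was designed to avoid: there, $\w_\ep(\cdot,t)\to\w(\cdot,t)$ strongly in $\Lone(\Omega)$ for every $t$, so the identification of $u$ and the Yudovich-class properties \eqref{eq:Yudovich}, \eqref{eq:divweak} follow directly from \lemref{lem:velocity}.
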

\begin{proof}
We closely follow the existence proof of weak solutions in $\Rsp^2$ as given in Chapter 8 of \cite{MaBe02}. Observe that it is enough to prove the existence in the time interval $[0,T)$ for arbitrary $T>0$. By restricting $\w_0$ to compact sets and by convolution, we see that there exists initial vorticities $(\w_0)_\ep \in C^{\infty}_c(\Omega_\ep) \subset C^{\infty}_c(\Omega)$ such that for all $0<\ep\leq 1$
\begin{align*}
\norm[\Linfty(\Omega_\ep)]{(\w_0)_\ep} \leq \norm[\Linfty(\Omega)]{\w_0}, \qq \norm[\Lone(\Omega_\ep)]{(\w_0)_\ep} \leq \norm[\Lone(\Omega)]{\w_0}
\end{align*}  
and 
\begin{align*}
\norm[\Lone(\Omega)]{(\w_0)_\ep - \w_0} \to 0 \qq \tx{ as }\ep \to 0.
\end{align*}
Now for $\ep>0$ the domain $\Omega_\ep$ is a smooth bounded domain and hence there exists a unique smooth solution $(u_\ep,\w_\ep)$ in $\Omega_\ep$ in the time interval $[0,T)$ with initial vorticity $(\w_0)_\ep$ (see \cite{MaPu94}). Let the corresponding flows be $X_\ep:\Omega_\ep \times [0,T) \to \Omega_\ep$, then from the transport equation we see that $\w_\ep(x,t) = (\w_0)_\ep(X_\ep^{-1}(x,t))$. As $X_\ep(\cdot,t)$ and $X_\ep^{-1}(\cdot,t)$ are measure preserving, we see that for all $1\leq p\leq \infty$ we have $\norm[L^p(\Omega_\ep)]{\w_\ep(\cdot,t)} = \norm[L^p(\Omega_\ep)]{(\w_0)_\ep} \leq \norm[L^p(\Omega)]{\w_0}$. 

\medskip
\noindent\textbf{Step 1:} Let $K \subset \Omega$ be a compact set and let $R>0$ be such that $\abs{x} \leq R$ for all $x \in K$. Let $0<\ep_0\leq 1$ be such that $K \subset \Omega_\ep$ for all $0<\ep\leq \ep_0$ and $\ep_0$ also satisfies the conditions of \lemref{lem:dist}. From using the fact that the velocity is uniformly bounded by \lemref{lem:velocity} followed by \lemref{lem:dist}, we see that there exists  a compact set $K_1 \subset \Omega$ such that for all $0<\ep\leq \ep_0$ and $x \in K$ and $t_1,t_2 \in [0,T)$ we have that $X_\ep(X_\ep^{-1}(x,t_1),t_2), X_\ep^{-1}(X_\ep(x,t_1),t_2) \in K_1$. Similarly there also exists compact sets $K_2,K_3 \subset \Omega$ such that for all $0<\ep\leq \ep_0$ and $t_1,t_2 \in [0,T)$ we have for $x \in K_1$
\begin{align*}
X_\ep(X_\ep^{-1}(x,t_1),t_2), X_\ep^{-1}(X_\ep(x,t_1),t_2) \in K_2
\end{align*}
and similarly for $x \in K_2$ we have
\begin{align*}
X_\ep(X_\ep^{-1}(x,t_1),t_2), X_\ep^{-1}(X_\ep(x,t_1),t_2) \in K_3.
\end{align*}
These sets will be useful to prove estimates for the maps $X$ and $X^{-1}$ below. Now from \lemref{lem:velocity} observe that for all $\zone,\ztwo \in K_2$ and $0<\ep\leq \ep_0$ we have 
\begin{align*}
\frac{\diff \abs{X_\ep(\zone,t) - X_\ep(\ztwo,t)}}{\diff t} & \leq \abs{u_\ep(X_\ep(\zone,t),t) - u_\ep(X_\ep(\ztwo,t),t)} \\
& \leq C_{K_3, \ep_0,  \norm[\Lone\cap\Linfty]{\w_0}} \phi( \abs{X_\ep(\zone,t) - X_\ep(\ztwo,t)}).
\end{align*}
Hence from \lemref{lem:phi} there exists $\gamma_1,\gamma_2>0$ depending only on  $K_3, \ep_0, T$  and $ \norm[\Lone\cap\Linfty]{\w_0}$ so that for all $\zone,\ztwo \in K_2$ and $t\in[0,T)$
\begin{align}\label{eq:Xepest}
\abs{\zone - \ztwo}^{\gamma_1} \leq \abs{X_\ep(\zone,t) - X_\ep(\ztwo,t)} \leq \abs{\zone - \ztwo}^{\gamma_2}.
\end{align}
Therefore for all $\zone,\ztwo \in K_1$ and $t\in[0,T)$
\begin{align}\label{eq:Xepinvest}
 \abs{\zone - \ztwo}^{\frac{1}{\gamma_2}} \leq \abs{X_\ep^{-1}(\zone,t) - X_\ep^{-1}(\ztwo,t)} \leq \abs{\zone - \ztwo}^{\frac{1}{\gamma_1}}.
\end{align}
As the velocity is bounded by \lemref{lem:velocity}, we have for all $x\in K$ and all $t_1,t_2 \in [0,T)$
\begin{align*}
\abs{X_\ep(x,t_1) - X_\ep(x,t_2)}  \lesssim_{K, \ep_0, \norm[\Lone\cap\Linfty]{\w_0}} \abs{t_1 - t_2}.
\end{align*}
Now let $X_\ep^*(x,t;\tau) = X_\ep(X_\ep^{-1}(x,t),t-\tau)$ denote  the backward particle trajectories with $X_\ep^*(x,t;t) = X_\ep^{-1}(x,t)$ and which satisfies the ODE
\begin{align*}
\frac{\diff X_\ep^*(x,t;\tau)}{\diff \tau} = -u_\ep(X_\ep^*(x,t;\tau), t-\tau) \qq \qq X_\ep^*(x,t;0) = x.
\end{align*}
Observe that for $x\in K$ we have $X_\ep^*(x,t;\tau) \in K_1$. Hence from the above equation, \eqref{eq:Xepinvest} and \lemref{lem:velocity} we see that for all $x\in K$ and all $0\leq t_1 \leq t_2 <T$ we have
\begin{align*}
\abs{X_\ep^{-1}(x,t_1) - X_\ep^{-1}(x,t_2)} & = \abs{X_\ep^{-1}(x,t_1) - X_\ep^{-1}(X_\ep^*(x,t_2;t_2-t_1),t_1)} \\
& \leq \abs{x - X_\ep^*(x,t_2;t_2-t_1)}^{\frac{1}{\gamma_1}} \\
&  \lesssim_{\norm[\Lone\cap\Linfty]{\w_0}}   \abs{t_2-t_1}^{\frac{1}{\gamma_1}}.
\end{align*}

\medskip
\noindent\textbf{Step 2:} Using these estimates we see that for $0<\ep\leq \ep_0$ the restricted functions $X_\ep,X_\ep^{-1}:K\times[0,T) \to \Omega$ form equicontinuous families. Hence by Arzela Ascoli and a diagonalization argument and passing to a subsequence we get continuous functions $X,X^{-1}:\Omega\times[0,T) \to \Omega$ such that 
\begin{align*}
X_\ep \to X  \qq\tx{ and }\quad X_\ep^{-1} \to X^{-1}
\end{align*}
uniformly on compact subsets of $\Omega\times [0,T)$. Hence for all $t\in [0,T)$ the function $X(\cdot,t):\Omega\to \Omega$ is a homeomorphism. As $X_\ep(\cdot,t)$ and $X_\ep(\cdot,t)$ are measure preserving, we see that for any $f \in C_c(\Omega_\ep) \subset C_c(\Omega)$
\begin{align*}
\int_{\Omega} f(X_\ep(x,t))1_{x\in \Omega_\ep} \diff x = \int_{\Omega_\ep} f(X_\ep(x,t)) \diff x =  \int_{\Omega_\ep} f(x) \diff x = \int_{\Omega} f(x) \diff x.
\end{align*}
Hence by letting $\ep \to 0$ and by an approximation argument we see that $X(\cdot,t)$ and $X^{-1}(\cdot,t)$ are also measure preserving. 

We can finally define $\w:\Omega \times [0,T) \to \Rsp$ as $\w(x,t) := \w_0(X^{-1}(x,t))$ and $u(x,t) := \int_{\Omega}K_{\Omega}(x,y)\w(y,t) \diff y$. It is easy to see that $(u,\w)$ is in the Yudovich class \eqref{eq:Yudovich} by using that fact that $X^{-1}(\cdot,t)$ is measure preserving and from \lemref{lem:velocity}. 

Let us extend the function $\w_\ep(\cdot,t) : \Omega_\ep\to \Rsp$ to $\w_\ep(\cdot,t) : \Omega\to \Rsp$ by zero. We then claim that for any $t\in [0,T)$ we have
\begin{align*}
\norm[\Lone(\Omega)]{\w_\ep(\cdot,t) - \w(\cdot,t)} \to 0 \qq \tx{ as } \ep \to 0.
\end{align*}
To see this observe that for any fixed $0<\ep_0 < 1$, for all $0<\ep\leq \ep_0$ and $x \in \Omega_{\ep_0}$ we have
\begin{align*}
& \abs{\w_\ep(x,t) - \w(x,t)} \\
& = \abs{(\w_0)_\ep(X_\ep^{-1}(x,t)) - \w_0(X^{-1}(x,t))} \\
& \leq  \abs{(\w_0)_\ep(X_\ep^{-1}(x,t)) - (\w_0)_{\ep_0}(X_\ep^{-1}(x,t))} +  \abs{(\w_0)_{\ep_0}(X_\ep^{-1}(x,t)) - (\w_0)_{\ep_0}(X^{-1}(x,t))} \\
& \quad + \abs{(\w_0)_{\ep_0}(X^{-1}(x,t)) - \w_0(X^{-1}(x,t))}.
\end{align*}
Hence using the fact that $\norm[\Lone(\Omega)]{(w_0)_\ep - \w_0} \to 0$ as $\ep \to 0$, $X_\ep^{-1}(\cdot,t)$ and $X^{-1}(\cdot,t)$ are measure preserving and the fact that $X_\ep^{-1}(\cdot,t) \to X^{-1}(\cdot,t)$ uniformly on compact subsets of $K$, we see that
\begin{align*}
\norm[\Lone(\Omega)]{(\w_\ep(\cdot,t) - \w(\cdot,t))\one_{\Omega_\ep}} \to 0 \qq \tx{ as } \ep \to 0.
\end{align*}
As $\norm[\Lone(\Omega)]{\w(\cdot,t)\one_{\Omega_\ep} - \w(\cdot,t)} \to 0$ as $\ep \to 0$, the claim is proved. 

We extend $u_\ep(\cdot,t):\Omega_\ep \to \Csp$ to $u_\ep(\cdot,t):\Omega \to \Csp$ by zero. We now claim that for any fixed $t \in [0,T)$ we have $u_\ep(x,t) \to u(x,t)$ a.e. $x \in \Omega$. To see this observe that we have for $\zone \in \Omega_\ep$
\begin{align*}
u_\ep(\zone,t)   =   \int_{\Omega_\ep} K_{\Omega_\ep}(z_1, z) \w_\ep(z,t) \diff z.
\end{align*}
As $\w_\ep(\cdot,t) = 0$ on $\Omega\backslash \Omega_\ep$ we have
\begin{align*}
u_\ep(\zone,t)   =   \int_{\Omega} K_{\Omega_\ep}(z_1, z) (\w_\ep(z,t) - \w(z,t)) \diff z +  \int_{\Omega} K_{\Omega_\ep}(z_1, z)  \w(z,t) \diff z.
\end{align*}
Using \lemref{lem:Psiep} we see that the second term converges to $u(\zone,t)$ by dominated convergence. The first term can be easily controlled by a similar computation as done in \lemref{lem:velocity}; that is
\begin{align*}
& \abs{  \int_{\Omega} K_{\Omega_\ep}(z_1, z) (\w_\ep(z,t) - \w(z,t)) \diff z} \\
& \lesssim \int_{\Omega} \frac{1}{\abs{\zone - z}} \abs{\w_\ep(z,t) - \w(z,t)}\diff z \\
& \lesssim \norm[L^3(\Omega\cap B_1(z_1))]{\w_\ep(\cdot,t) - \w(\cdot,t)} + \norm[L^1(\Omega)]{\w_\ep(\cdot,t) - \w(\cdot,t)}
\end{align*}
which goes to $0$ as $\ep \to 0$. 

\medskip
\noindent\textbf{Step 3:} Let us now show that $(u,\w)$ is a weak solution to the Euler equation \eqref{eq:weak}. Let $\varphi \in C^{\infty}_c(\Omega\times [0,T))$. Then there exists $\ep_0>0$ such that $\supp(\varphi) \subset \Omega_{\ep_0}\times [0,T)$. Hence for all $0<\ep\leq \ep_0$ we see that 
\begin{align*}
\int_0^T \int_{\Omega} \w_\ep(\pt \varphi + u_\ep\cdot \grad \varphi) \diff x \diff t = - \int_{\Omega} (\w_0)_\ep \varphi(\cdot,0) \diff x.
\end{align*}
Now observe that
\begin{align*}
\int_0^T \int_{\Omega} \w_\ep(\pt \varphi + u_\ep\cdot \grad \varphi) \diff x \diff t & = \int_0^T \int_{\Omega} (\w_\ep - \w)(\pt \varphi + u_\ep\cdot \grad \varphi) \diff x \diff t \\
& \quad + \int_0^T \int_{\Omega} \w(\pt \varphi + u_\ep\cdot \grad \varphi) \diff x \diff t.
\end{align*}
The second term converges to 
\begin{align*}
\int_0^T \int_{\Omega} \w(\pt \varphi + u\cdot \grad \varphi) \diff x \diff t
\end{align*}
by dominated convergence. The first term can be controlled by using the fact that $u_{\ep}$ are bounded by \lemref{lem:velocity}
\begin{align*}
\abs{ \int_0^T \int_{\Omega} (\w_\ep - \w)(\pt \varphi + u_\ep\cdot \grad \varphi) \diff x \diff t } \lesssim_{\varphi,\norm[\Lone\cap\Linfty]{\w_0}} \int_0^T \norm[\Lone(\Omega)]{\w_\ep(\cdot,t) - \w(\cdot,t)} \diff t
\end{align*}
which goes to zero by dominated convergence. We also see that as $(\w_0)_\ep \to \w_0$ in $\Lone(\Omega)$ we have
\begin{align*}
- \int_{\Omega} (\w_0)_\ep \varphi(\cdot,0) \diff x  \to - \int_{\Omega} (\w_0) \varphi(\cdot,0) \diff x.
\end{align*}
Thus $(u,\w)$ satisfies \eqref{eq:weak}. Now for any $h \in G_c(\Omega)$ we see that
\begin{align*}
\int_{\Omega_\ep} u_\ep(\cdot,t)\cdot h = \int_{\Omega} u_\ep(\cdot,t)\cdot h = 0.
\end{align*}
Consequently by using the fact that $u_\ep$ are bounded by \lemref{lem:velocity}, we get from dominated convergence that
\begin{align*}
 \int_{\Omega} u(\cdot,t)\cdot h = 0.
\end{align*}
Hence proved. 
\end{proof}

\begin{lemma}\label{lem:transport}
Let $(u,\w)$ be a Yudovich weak solution with initial vorticity $\w_0$ in the domain $\Omega$ in the time interval $[0,T)$. Then 
\begin{enumerate}
\item The map $X(\cdot,t):\Omega \to \Omega$ is a homeomorphism for each $t\in [0,T)$ and the functions $X,X^{-1}:\Omega\times[0,T) \to \Omega$ are continuous. 
\item $\w(x,t) = \w_0(X^{-1}(x,t))$ for a.e. $(x,t) \in \Omega\times[0,T)$
\item If $(t_n)_{n=1}^\infty$ is a sequence in $[0,T)$ with $t_n \to t \in [0,T)$, then $\norm[1]{\w(\cdot,t_n) - \w(\cdot,t)} \to 0$ as $n\to \infty$. 
\item The functions $b:\Omegabar\times[0,T) \to \Csp$, $\btil:\Hspbar \times [0,T) \to \Csp$ and $u:\Omegabar\times[0,T) \to \Csp$ are bounded continuous functions and the ODE \eqref{eq:ODEX} for $\ep=0$ is true pointwise for all $(x,t) \in \Omega\times[0,T)$.
\end{enumerate}
\end{lemma}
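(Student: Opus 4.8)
The plan is to fix the velocity first, then build the flow, then establish the Lagrangian representation $\w=\w_0\circ X^{-1}$, and finally read off the remaining regularity. Since $(u,\w)$ lies in the Yudovich class \eqref{eq:Yudovich} and satisfies \eqref{eq:divweak}, for a.e.\ $t\in[0,T)$ the field $u(\cdot,t)$ is continuous on $\Omegabar$, divergence free, tangent to $\partial\Omega$, decaying at infinity, and has curl $\w(\cdot,t)\in\Lone\cap\Linfty$; hence \lemref{lem:velocity} part (4) forces $u(\cdot,t)=\int_\Omega K_\Omega(\cdot,y)\,\w(y,t)\diff y$ for a.e.\ $t$. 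I would redefine $u$ by this Biot--Savart formula for every $t$ (which alters it only on a $t$-null set and so preserves \eqref{eq:weak}); then \lemref{lem:velocity} parts (1) and (2) give that $u$ is bounded and log-Lipschitz in the interior, uniformly in $t$, with constants governed by $\esssup_{t}\norm[\Lone\cap\Linfty]{\w(\cdot,t)}$.

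For part (1) I would construct $X$ directly. As $u$ is bounded, measurable in $t$, and log-Lipschitz (hence Osgood) in $x$ on compact subsets of $\Omega$, the ODE \eqref{eq:ODEX} admits through each $x\in\Omega$ a unique Carath\'eodory solution; \lemref{lem:dist} applied to this trajectory shows $d(X(x,t),\partial\Omega)$ stays strictly positive on every finite interval, so the solution never reaches $\partial\Omega$ and persists on all of $[0,T)$. Uniqueness backward in time yields injectivity and an inverse $X^{-1}$ solving the backward equation, while the Osgood modulus combined with \lemref{lem:phi} produces the two-sided continuity moduli exactly as in \eqref{eq:Xepest}--\eqref{eq:Xepinvest}; this gives the homeomorphism and joint-continuity claim. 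Since $u$ is divergence free, $X(\cdot,t)$ is measure preserving.

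The core is part (2). I would set $W(x,t):=\w_0(X^{-1}(x,t))$ and check that $W$ is itself a weak solution \eqref{eq:weak} with the same $u$: the change of variables $x=X(y,t)$ (Jacobian one) turns $\frac{\diff}{\diff t}\int_\Omega W\varphi\diff x$ into $\int_\Omega \w_0(y)\,\frac{\diff}{\diff t}\varphi(X(y,t),t)\diff y=\int_\Omega\w_0(y)\,(\pt\varphi+u\cdot\grad\varphi)(X(y,t),t)\diff y$ by \eqref{eq:ODEX}, and reversing the substitution recovers the right-hand side of \eqref{eq:weak} after integrating in $t$. Then $\rho:=\w-W$ is a bounded weak solution of the linear transport equation with drift $u$ and zero initial data, and I would deduce $\rho\equiv 0$ from uniqueness for that linear equation; the decisive point is that \lemref{lem:dist} confines all trajectories to the open set $\Omega$, where $u$ is Osgood, so the non-Lipschitz corner---a null set never reached in finite time---does not obstruct the standard renormalization/commutator argument. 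This linear uniqueness is the main obstacle, precisely because $u$ is merely log-Lipschitz and only in the interior.

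Parts (3) and (4) then follow. For (3), writing $\w(\cdot,t_n)-\w(\cdot,t)=\w_0\circ X^{-1}(\cdot,t_n)-\w_0\circ X^{-1}(\cdot,t)$, I would approximate $\w_0$ in $\Lone$ by $\psi\in C_c(\Omega)$; measure-preservation of $X^{-1}(\cdot,s)$ controls the error terms uniformly in $s$, and the main term tends to $0$ by the time-continuity of $X^{-1}$ from (1) and dominated convergence, the transported mass staying in a fixed compact set by \lemref{lem:dist}. With this $\Lone$-continuity in time, the Biot--Savart formula together with \lemref{lem:velocity} parts (1)--(2) makes $u$ bounded and jointly continuous on $\Omegabar\times[0,T)$, and the explicit integrals \eqref{eq:b}, \eqref{eq:btil}, estimated through \lemref{lem:Psiep} and the appendix bounds, make $b$ and $\btil$ bounded and continuous on $\Omegabar\times[0,T)$ and $\Hspbar\times[0,T)$; joint continuity of $u$ then upgrades \eqref{eq:ODEX} (with $\ep=0$) to a genuine pointwise identity for every $(x,t)\in\Omega\times[0,T)$, finishing (4).
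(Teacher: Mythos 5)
Your parts (1), (3) and (4) track the paper's proof closely: $X$ is built from the ODE \eqref{eq:ODEX} using the interior log-Lipschitz bound of \lemref{lem:velocity} and the confinement estimate of \lemref{lem:dist}, injectivity and the continuity moduli come from the Osgood argument behind \eqref{eq:Xepest}--\eqref{eq:Xepinvest}, the $\Lone$ time-continuity comes from approximating $\w_0$ in $\Lone$ by compactly supported functions, and boundedness and joint continuity of $b$, $\btil$, $u$ follow from the kernel estimates, which upgrades \eqref{eq:ODEX} to a pointwise identity. Those parts are fine and essentially identical to the paper.

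The gap is in part (2), which is the heart of the lemma. The paper does not prove the Lagrangian representation $\w = \w_0\circ X^{-1}$ by hand; it invokes Lemma 3.1 of Han--Zlato\v{s} \cite{HaZl21}, a nontrivial result designed for exactly this situation. Your substitute --- show that $W:=\w_0\circ X^{-1}$ is a weak solution with the same drift and then appeal to ``uniqueness for the linear transport equation'' --- names the right statement but does not prove it, and the justification you offer is unsound on two counts. First, uniqueness of bounded distributional solutions of the transport equation is an Eulerian statement: the fact that the \emph{trajectories} never reach $\partial\Omega$ (from \lemref{lem:dist}) does not by itself transfer to the PDE, because the weak formulation must be tested against functions supported arbitrarily close to the boundary, and the renormalization argument then produces boundary flux terms of the form $\int \rho^2\, u\cdot\grad\chi_\delta$ for cutoffs $\chi_\delta$ collapsing onto $\partial\Omega$; controlling these near the corner, where \eqref{eq:CZineq} and \eqref{eq:logLipest} fail, is precisely the difficulty the whole paper is about. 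Second, the regularity you invoke --- Osgood/log-Lipschitz continuity of $u$ --- is the right hypothesis for uniqueness of the \emph{flow}, but it is not sufficient for the DiPerna--Lions commutator lemma, which requires $\grad u \in \Lone_{loc}$ in space; log-Lipschitz alone does not imply Sobolev regularity (Zygmund-class functions can be nowhere differentiable), and for Yudovich solutions the needed $W^{1,p}_{loc}$ bounds come instead from interior Calder\'on--Zygmund estimates, which you never invoke. So the argument as sketched does not close: you must either carry out the renormalization argument together with the boundary-cutoff analysis in detail, or do what the paper does and cite Lemma 3.1 of \cite{HaZl21}.
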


\begin{proof}
We prove the statements sequentially.
\begin{enumerate}
\item As $X$ is defined as the solution to the ODE \eqref{eq:ODEX} for $\ep=0$ and as the velocity is locally log-Lipschitz from \lemref{lem:velocity}, we see that $X$ is continuous as long as $X(x,t) \in \Omega$. Now from \lemref{lem:dist} we see that $X(x,t) \in \Omega$ for all $(x,t) \in \Omega \times [0,T)$ and hence $X:\Omega\times[0,T) \to \Omega$ is continuous. From the same argument as the one used to derive \eqref{eq:Xepest} we see that $X(\cdot,t):\Omega\to \Omega$ is one to one. Now using this together with \lemref{lem:velocity} and \lemref{lem:dist}, we see that $X(\cdot,t)$ is onto $\Omega$ and hence $X(\cdot,t):\Omega \to \Omega$ is a homeomorphism. Hence $X^{-1}:\Omega\times[0,T) \to \Omega$ is also continuous. 

\item By using Lemma 3.1 in \cite{HaZl21} by Han and Zlato\v{s}, we directly get that $\w(x,t) = \w_0(X^{-1}(x,t))$ for a.e. $(x,t) \in \Omega\times[0,T)$. 

\item If $K \subset \Omega$ is a compact set then by a similar argument as the one used in \thmref{thm:weak} we see that the restricted functions $X^{-1}(\cdot,t_n):K \to \Omega$ form an equicontinuous family. As $X^{-1}(\cdot,t_n) \to X^{-1}(\cdot,t)$ pointwise, this implies that $X^{-1}(\cdot,t_n) \to X^{-1}(\cdot,t)$ uniformly on compact sets of $\Omega$. We now get that $\norm[1]{\w(\cdot,t_n) - \w(\cdot,t)} \to 0$ by approximating $\w_0$ by a function $g_\ep \in C_c(\Omega)$ in $\Lone(\Omega)$ and passing to the limit. 

\item  Recall that $b$ is given by the formula \eqref{eq:bep} with $\ep=0$. From a similar computation as in \lemref{lem:velocity} we see that
\begin{align*}
\norm[\Linfty(\Omega)]{b} \lesssim \norm[L^1(\Omega)\cap L^\infty(\Omega)]{\w_0}.
\end{align*}
Now if $(\zone,t_1), (\ztwo,t_2) \in \Omegabar\times[0,T)$ then from \lemref{lem:powers} and the calculations of \lemref{lem:velocity} we see that
\begin{align*}
 \abs{b(\zone,t_1) - b(\ztwo,t_2)} & \leq \abs{b(\zone,t_1) - b(\ztwo,t_1)} + \abs{b(\ztwo,t_1) - b(\ztwo,t_2)} \\
& \lesssim  \phi(\abs{\zone - \ztwo})\min\cbrac{\abs{\zone}^{1 - \frac{1}{\nu} },\abs{\ztwo}^{1 - \frac{1}{\nu} } }\norm[\Lone\cap\Linfty]{\w_0}  \\
& \quad + \int_{\Omega}\frac{1}{\abs{\ztwo - z}\abs{z}^{\frac{1}{\nu} - 1}}\abs{\w(z,t_1) - \w(z,t_2)}\diff z.
\end{align*}
For the first term we observe from \lemref{lem:powers} that
\begin{align*}
 \abs{z_1 - z_2}\min\cbrac{\abs{\zone}^{1 - \frac{1}{\nu} },\abs{\ztwo}^{1 - \frac{1}{\nu} } } & \approx \abs{z_1 - z_2}\min\cbrac{\abs{\zone}^{1 - \frac{1}{\nu} },\abs{\ztwo}^{1 - \frac{1}{\nu} }, \abs{z_1 - z_2}^{1 - \frac{1}{\nu} } }  \\
& \lesssim \abs{z_1 - z_2}^{2 - \frac{1}{\nu}}.
\end{align*}
Now by using the weighted AM-GM inequality and using the definition of $\phi$ from \eqref{def:phi} we get
\begingroup
\allowdisplaybreaks
\begin{align*}
& \abs{b(\zone,t_1) - b(\ztwo,t_2)} \\
& \lesssim \max\cbrac{-\ln(\abs{\zone - \ztwo}), 1}\abs{\zone - \ztwo}^{2-\frac{1}{\nu}}\norm[\Lone\cap\Linfty]{\w_0}  \\*
& \quad + \int_{\Omega}\frac{1}{\abs{\ztwo - z}^{\frac{1}{\nu}}}\abs{\w(z,t_1) - \w(z,t_2)}\diff z  + \int_{\Omega}\frac{1}{\abs{z}^{\frac{1}{\nu}}}\abs{\w(z,t_1) - \w(z,t_2)}\diff z \\
& \lesssim \max\cbrac{-\ln(\abs{\zone - \ztwo}), 1}\abs{\zone - \ztwo}^{2-\frac{1}{\nu}}\norm[\Lone\cap\Linfty]{\w_0} + \norm[L^{\frac{2\nu+1}{2\nu -1}}]{\w(\cdot,t_1) - \w(\cdot,t_2)} \\*
& \quad + \norm[\Lone]{\w(\cdot,t_1) - \w(\cdot,t_2)}.
\end{align*}
\endgroup
Hence $b:\Omegabar\times[0,T) \to \Csp$ is bounded and continuous and hence $\btil:\Hspbar\times[0,T) \to \Csp$ is also bounded and continuous.  From \lemref{lem:velocity} we already know that $u$ is bounded and from \eqref{eq:Kepzero} and \eqref{eq:bep} we see that $u(z,t) = \frac{1}{\nu}\zbar^{\frac{1}{\nu}-1}b(z,t)$ and hence $u:\Omegabar\times[0,T) \to \Csp$ is also continuous. As the velocity is continuous on $\Omega\times[0,T)$, the ODE \eqref{eq:ODEX} for $\ep=0$ is true pointwise for all $(x,t) \in \Omega\times[0,T)$.
\end{enumerate}
\end{proof}

\subsection{Properties of the flow}

From now on we will only consider flows on the domain $\Omega$ and so we will only be concerned with equations \eqref{eq:ODEX}, \eqref{eq:Xep},  \eqref{eq:bep} , \eqref{eq:Yep} and $\eqref{eq:btilep}$ for $\ep=0$. In particular we get that  $X:\Omega\times [0,T) \to \Omega$ satisfies
\begin{align}\label{eq:Xnew}
\frac{\diff X(x,t)}{\diff t} = \frac{1}{\nu}b(X(x,t),t)\Xbar(x,t)^{\frac{1}{\nu} -1},
\end{align}
where $b:\Omega\times [0,\infty) \to \Csp$ is given by
\begin{align}\label{eq:bnew}
b(\zone,t)  :=  \brac{\frac{i}{2\pi}} \int_{\Omega} \sqbrac{\frac{1}{\zonebar^{\frac{1}{\nu}} - \zbar^{\frac{1}{\nu}}} - \frac{1}{\zonebar^{\frac{1}{\nu}} - \z^{\frac{1}{\nu}}   } }\w(z,t) \diff z.
\end{align}
The flow $Y:\Hsp\times[0,T) \to \Hsp $ is defined as $Y(y,t) := X(x,t)^{\frac{1}{\nu}}$, where $y = x^\frac{1}{\nu}$. Similarly $\btil:\Hsp \times [0,\infty) \to \Csp$ is defined as $\btil(y,t) := b(x,t)$ and we have 
\begin{align}\label{eq:Ynew}
\frac{\diff Y(y,t)}{\diff t} = \frac{1}{\nu^2}\btil(Y(y,t),t)\abs{Y(y,t)}^{2-2\nu}.
\end{align}
Defining $\wtil:\Hsp\times[0,\infty) \to \Rsp$ as $\wtil(s,t) := \w(z,t)$, where $s = z^{\frac{1}{\nu}}$, we get 
\begin{align}\label{eq:btilnew}
\btil(y,t) =  \brac*[\Big]{\frac{i\nu^2}{2\pi}} \int_{\Hsp}\sqbrac{\frac{1}{\ybar - \sbar} - \frac{1}{\ybar - s}}\wtil(s,t) \abs{s}^{2\nu -2} \diff s.
\end{align}

 Let the initial vorticity $\w_0 \in \Lone(\Omega)\cap\Linfty(\Omega)$. Note that if $\w_0 \equiv 0$ then the flow is trivial and hence we assume that $\w_0 \not\equiv 0$. Observe that
\begin{align*}
b(0,0) = \btil(0,0) =  \brac*[\Big]{\frac{i\nu^2}{2\pi}} \int_{\Hsp}\sqbrac*[\bigg]{\frac{-1}{ \sbar } + \frac{1}{s}} \wtil_0(s) \abs{s}^{2\nu -2} \diff s = \frac{\nu^2}{\pi}\int_\Hsp \Imag(s)\wtil_0(s) \abs{s}^{2\nu-4} \diff s.
\end{align*}
Define 
\begin{align}\label{eq:bzero}
b_0 := b(0,0).
\end{align}
Hence if $\w_0 \geq 0$ and $\w_0 \not\equiv 0$, then $0<b_0<\infty$. (Recall that $\Hsp$ is the upper half plane and $\wtil_0(s) = \w_0(s^\nu)$ for $s \in \Hsp$). 

The next proposition quantifies the property that the support of the vorticity moves away from the corner for a short period of time. This is proved in part (3) of the proposition below. This is the analog of step (1) of the proof of the uniqueness of the ODE \eqref{eq:sampleODE} in the introduction. 

\begin{prop}\label{prop:flow}
Let $(u,\w)$ be a Yudovich weak solution in the domain $\Omega$ in the time interval $[0,\infty)$ with initial vorticity $\w_0 \in \Lone(\Omega)\cap\Linfty(\Omega)$ and assume that we have $b_0 >0$ where $b_0$ is defined in \eqref{eq:bzero}. Let $X:\Omega\times[0,\infty) \to \Omega$ be the flow map of the solution. Let $\ep>0$ be such that $0<\ep<\min\cbrac*[\big]{b_0,1}$. Then there exists $T>0$ and $0<R<1/10$ such that for all $t\in [0,T]$ we have
\begin{enumerate}
\item For all $x\in \Omega\cap B_{2R}(0) $ we have $\abs{b(x,t) - b_0} <\ep$.
\item  For all $x \in \Omega \cap B_R(0)$ we have $\abs{X(x,t)} \leq \frac{3R}{2}$ and for all $x \in \Omega \cap B_R(0)^c$ we have $\abs{X(x,t)} \geq \frac{R}{2}$.
\item For all $x\in \Omegaplus\cap B_R(0)$ we have $X(x,t) \in \Omegaplus$ and  
\begin{align*}
\abs{X(x,t)} \geq \sqbrac{\frac{(2\nu-1)(b_0 - \ep)t}{\nu^2}}^{\frac{\nu}{2\nu-1}}.
\end{align*}
\item For all $x\in \Omegaplus\cap B_R(0)$ we have $\abs{X(x,t)} \gtrsim \abs{x}^{1+\ep}$ 
\end{enumerate}
\end{prop}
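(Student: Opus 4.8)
The plan is to push the whole analysis to the upper half plane through $Y(y,t) = X(x,t)^{\frac{1}{\nu}} = \Psi(X(x,t))$, which by \eqref{eq:Ynew} solves $\frac{\diff Y}{\diff t} = \frac{1}{\nu^2}\btil(Y,t)\abs{Y}^{2-2\nu}$ with $\btil(Y(y,t),t) = b(X(x,t),t)$; recall $X$ is $C^1$ in $t$ and $\btil$ is continuous by \lemref{lem:transport}. The reason to work in this variable is that $\Omegaplus$ corresponds to the open first quadrant $\cbrac{\Real(Y)>0,\ \Imag(Y)>0}$, and the decisive monotone quantity will be $\Real(Y)$ rather than $\abs{X}$ itself.

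For part (1) I would only use that $b$ is bounded and continuous on $\Omegabar\times[0,\infty)$ (\lemref{lem:transport}) with $b(0,0)=b_0$ by \eqref{eq:bzero}: continuity at the corner produces $0<R<1/10$ and $T_1>0$ such that $\abs{b(x,t)-b_0}<\ep$ on $\brac{\Omegabar\cap B_{2R}(0)}\times[0,T_1]$. For part (2), let $M$ be a uniform bound on the velocity, finite by \lemref{lem:velocity}; since $\frac{\diff X}{\diff t}=u(X,t)$ we have $\abs{X(x,t)-x}\le Mt$, so choosing $T\le\min\cbrac{T_1,\,R/(2M)}$ makes $\abs{X(x,t)-x}\le R/2$, which gives both inequalities of part (2). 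This single $R,T$ will serve all four parts. Crucially, for $x\in B_R(0)$ it forces $X(x,t)\in B_{3R/2}(0)\subset B_{2R}(0)$, so part (1) applies along the trajectory and $b_r:=\Real(b(X(x,t),t))>b_0-\ep>0$ there.

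The heart of the matter is part (3). Since $X(x,t)\in\Omega$ we automatically have $Y\in\Hsp$, i.e. $\Imag(Y)>0$, so only $\Real(Y)$ must be controlled. Differentiating \eqref{eq:Ynew},
\[
\frac{\diff \Real(Y)}{\diff t}=\frac{\abs{Y}^{2-2\nu}}{\nu^2}\,b_r>0,
\]
so $\Real(Y)$ is strictly increasing; as $\Real(Y(y,0))=\Real(x^{\frac1\nu})>0$ for $x\in\Omegaplus$, we get $\Real(Y)>0$ for all $t\in[0,T]$, i.e. $X(x,t)\in\Omegaplus$. For the quantitative bound I would use $\abs{Y}\ge\Real(Y)>0$ together with $2-2\nu>0$ to deduce $\frac{\diff \Real(Y)}{\diff t}\ge\frac{b_0-\ep}{\nu^2}\Real(Y)^{2-2\nu}$, rewrite this as $\frac{\diff}{\diff t}\Real(Y)^{2\nu-1}\ge\frac{(2\nu-1)(b_0-\ep)}{\nu^2}$, and integrate (discarding the nonnegative value at $t=0$) to obtain $\Real(Y(y,t))^{2\nu-1}\ge\frac{(2\nu-1)(b_0-\ep)}{\nu^2}\,t$. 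Since $\abs{X}=\abs{Y}^\nu\ge\Real(Y)^\nu$, this is exactly the stated lower bound. The main obstacle — and the reason $\abs{X}$ is the wrong object — is that the radial rate $\frac{\diff\abs{X}}{\diff t}$ carries a factor $\cos\brac{\arg(X)/\nu}$ that degenerates at the bisector, where the small imaginary part of $b$ could even make $\abs{X}$ decrease; passing to $\Real(Y)$ eliminates this angular factor completely, so one and the same monotone quantity yields both the invariance of $\Omegaplus$ and the growth estimate.

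Finally, for part (4) I would complement part (3) with the reverse estimate from $\frac{\diff\abs{Y}}{\diff t}\ge-\frac{\abs{Y}^{2-2\nu}}{\nu^2}\abs{\btil}$ and $\abs{\btil}<b_0+\ep$, namely $\frac{\diff}{\diff t}\abs{Y}^{2\nu-1}\ge-\frac{(2\nu-1)(b_0+\ep)}{\nu^2}$, which integrates to $\abs{Y(y,t)}^{2\nu-1}\ge\abs{x}^{\frac{2\nu-1}{\nu}}-B_{+}t$ with $B_{\pm}=\frac{(2\nu-1)(b_0\pm\ep)}{\nu^2}$. Combining with the increasing bound $\abs{Y}^{2\nu-1}\ge B_{-}t$ from part (3), the pointwise maximum of a decreasing and an increasing affine function of $t$ is minimized at their crossing time $t_*=\abs{x}^{\frac{2\nu-1}{\nu}}/(B_{-}+B_{+})$, giving $\abs{Y(y,t)}^{2\nu-1}\ge\frac{B_{-}}{B_{-}+B_{+}}\abs{x}^{\frac{2\nu-1}{\nu}}=\frac{b_0-\ep}{2b_0}\abs{x}^{\frac{2\nu-1}{\nu}}$ for every $t\in[0,T]$. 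Hence $\abs{X(x,t)}=\abs{Y}^\nu\gtrsim\abs{x}$, and since $\abs{x}<R<1$ this is stronger than the claimed $\abs{X(x,t)}\gtrsim\abs{x}^{1+\ep}$.
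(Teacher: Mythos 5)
Your parts (1)--(3) coincide with the paper's own proof: continuity of $b$ up to the corner (\lemref{lem:transport}) gives (1), the uniform velocity bound (\lemref{lem:velocity}) gives (2), and (3) is proved exactly as in the paper by integrating $\frac{\diff}{\diff t}\brac{\Real Y}^{2\nu-1} \geq \frac{(2\nu-1)(b_0-\ep)}{\nu^2}$, legitimate since $\Real(\btil) > b_0 - \ep$ along the trajectory by (1) and (2). Part (4) is where you take a genuinely different route, and your argument is correct. The paper controls $\Imag(Y)$ instead: it uses the estimate \eqref{eq:ImY} from \lemref{lem:dist} together with \lemref{lem:phi} to obtain $\Imag(Y(y,t)) \gtrsim \brac{\Imag(Y(y,0))}^{e^{C_2 t}}$, shrinks $T$ further so that $e^{C_2 T} < 1+\ep$, and then splits into the cases $\Real(y) \leq \Imag(y)$ and $\Real(y) \geq \Imag(y)$; that is precisely where the exponent $1+\ep$ comes from. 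Your route --- playing the linear growth $\brac{\Real Y(y,t)}^{2\nu-1} \geq B_- t$ from part (3) against the at-most-linear decay $\abs{Y(y,t)}^{2\nu-1} \geq \abs{y}^{2\nu-1} - B_+ t$ in your notation (valid because $\frac{\diff \abs{Y}}{\diff t} \geq -\abs{\diff Y/\diff t}$ and $\abs{\btil} < b_0+\ep$ along the trajectory), then minimizing the maximum of the two affine functions of $t$ at their crossing time --- is more elementary: it needs neither \lemref{lem:dist} nor \lemref{lem:phi} nor any further shrinking of $T$, and it yields the lossless bound $\abs{X(x,t)} \geq \brac{\frac{b_0-\ep}{2b_0}}^{\frac{\nu}{2\nu-1}}\abs{x}$, which is stronger than the stated $\abs{x}^{1+\ep}$ since $\abs{x} < R < 1$. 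What the paper's heavier argument buys is flexibility: in the remark following this proposition, where $\supp(\w_0)$ is allowed to extend beyond the angle bisector, one must handle points $y$ with $\Real(y) < 0$ or with $\Real(y)$ much smaller than $\Imag(y)$, for which your comparison --- anchored at $\Real(Y(y,0)) \geq 0$ --- is unavailable and the lower bound on $\Imag(Y)$ is the only usable control; your argument, as written, is tied to the quadrant $\Hspplus$.
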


\begin{proof} 
We will define $T>0$ at the very end of the proof. We will prove the result by proving the corresponding result for the flow $Y(y,t) = X(x,t)^{\frac{1}{\nu}}$ in the upper half plane. 
\begin{enumerate}
\item From \lemref{lem:transport} we know that $b:\Omegabar\times[0,\infty) \to \Csp$ is a continuous function. Hence there exists  $T_1>0$ and $0<R<1/10$ such that $\abs{b(x,t) - b_0} <\ep$ for all $x\in \Omega\cap B_{2R}(0) $ and $t\in [0,T_1]$. 

Now let $R^* := (R)^\frac{1}{\nu}$ and hence $0< R^* < 1/10$ and we have  $\abs*{\btil(y,t) - b_0} <\ep$ for all $y\in \Hsp\cap B(0,2^{\frac{1}{\nu}}R^*) $ and $t\in [0,T_1]$, where $\btil$ was defined in \eqref{eq:btilnew}. 

\item As the velocity is bounded from \lemref{lem:velocity}, there exists a constant $C_1>0$ such that $\abs{\frac{\diff X(x,t)}{\diff t}} \leq C_1$ for all $x\in \Omega$ and $t\in[0,\infty)$. Letting $T_2 = \frac{R}{2C_1} >0$ we see that for all $t\in [0,T_2]$ we have
\begin{align*}
\abs{X(x,t) - X(x,0)} \leq C_1 t \leq \frac{R}{2}.
\end{align*}
Now as $X(x,0) = x$ we see that for all $x \in \Omega \cap B_R(0)^c$ we have $\abs{X(x,t)} \geq \frac{R}{2}$. Similarly for all $x \in \Omega \cap B_R(0)$ we have $\abs{X(x,t)} \leq \frac{3}{2}R \leq 1/5$. 

Using this we see that for all $y \in \Hsp \cap B_{R^*}(0)^c$ and $t\in[0,T_2]$ we have  $\abs{Y(y,t)} \geq (\frac{R}{2})^{\frac{1}{\nu}} = R^*/2^{\frac{1}{\nu}}$.  Similarly we have  $\abs{Y(y,t)} \leq (\frac{3}{2}R)^{\frac{1}{\nu}}  \leq 1/5$ for all $y \in \Hsp \cap B_{R^*}(0)$ and $t\in[0,T_2]$.

\item Let $T_3 = \min\cbrac{T_1,T_2}>0$. From \eqref{eq:Ynew} and part (1) and (2) of this proposition we see that  for all $y\in \Hspplus\cap B_{R^*}(0)$ and $t\in[0,T_3]$, we have
\begin{align*}
\frac{\diff \Real\cbrac{Y(y,t)}}{\diff t} = \frac{1}{\nu^2}\Real(\btil(Y(y,t),t))\abs{Y(y,t)}^{2-2\nu} \geq \frac{(b_0 - \ep)}{\nu^2}\abs{\Real{Y(y,t)}}^{2-2\nu}  \geq 0.
\end{align*}
This says that the particle is moving to the right and hence $Y(y,t) \in \Hspplus$. We can quantify exactly how much it moves to the right by integrating and so
\begin{align*}
\frac{\Real\cbrac{Y(y,t)}^{2\nu-1}}{2\nu-1} - \frac{\Real\cbrac{Y(y,0)}^{2\nu-1}}{2\nu-1} \geq \frac{(b_0 - \ep)}{\nu^2}t.
\end{align*}
As $\Real\cbrac{Y(y,0)} = \Real(y) \geq 0$ for all $y\in \Hspplus\cap B_{R^*}(0)$ we obtain
\begin{align}\label{eq:ReYest}
\abs{Y(y,t)} \geq \Real\cbrac{Y(y,t)} \geq  \sqbrac{\frac{(2\nu-1)(b_0 - \ep)t}{\nu^2}}^{\frac{1}{2\nu-1}}.
\end{align}
Hence $\abs{X(x,t)} \geq \sqbrac{\frac{(2\nu-1)(b_0 - \ep)t}{\nu^2}}^{\frac{\nu}{2\nu-1}}$ for all $x\in \Omegaplus\cap B_R(0)$.

\item Let $x\in \Omega\cap B_R(0)$ and $y = x^{\frac{1}{\nu}}$. As $\abs{Y(y,t)} \lesssim 1$ for all $y \in \Hsp\cap B_{R^*}(0)$ and $t\in [0,T_3]$, we see from \eqref{eq:ImY} in \lemref{lem:dist}  that 
\begin{align*}
 \abs{\frac{\diff \Imag\cbrac{Y(y,t)}}{\diff t}} &  \lesssim \phi(\abs{\Imag(Y(y,t))})\brac{1 + \abs{Y(y,t)}^{2 - 2\nu}}\norm[\infty]{\w_0} \\
 &  \lesssim \phi(\abs{\Imag(Y(y,t))})\norm[\infty]{\w_0}.
\end{align*}
Hence from \lemref{lem:phi} we see that there exists $C_2 = C_2(\norm[\Lone\cap\Linfty]{\w_0})>0$ so that for all $y \in \Hsp\cap B_{R^*}(0)$ and $t\in [0,T_3]$ we have 
\begin{align*}
 \Imag(Y(y,t))  \gtrsim  \cbrac{\Imag(Y(y,0))}^{e^{C_2t}}.
\end{align*}
Let $T_4>0$ be such that $1<e^{C_2 T_4} < 1+\ep$ and let $T_5 = \min\cbrac{T_3, T_4}>0$. Then for all $y\in \Hsp\cap B_{R^*}(0)$ in the time $t\in [0,T_5]$ we have
\begin{align*}
 \Imag\cbrac{Y(y,t)} \gtrsim  \brac{\Imag\cbrac{Y(y,0)}}^{1+\ep}.
\end{align*}

We can now prove the required estimate. For $y\in \Hspplus\cap B_{R^*}(0)$ satisfying $\Real(y) \leq \Imag(y)$ we see that for all $t\in [0,T_5]$ we have
\begin{align}\label{eq:part4int1}
\abs{Y(y,t)} \geq \Imag\cbrac{Y(y,t)} \gtrsim  \brac{\Imag\cbrac{Y(y,0)}}^{1+\ep} \gtrsim \brac{\Imag(y)}^{1+\ep} \gtrsim \abs{y}^{1+\ep}.
\end{align}
For $y\in \Hspplus\cap B_R(0)$ satisfying $\Real(y) \geq \Imag(y)$ we see that for all $t\in [0,T_5]$ we have
\begin{align*}
\frac{\diff \Real\cbrac{Y(y,t)}}{\diff t} \geq 0.
\end{align*}
Hence 
\begin{align}\label{eq:part4int2}
\abs{Y(y,t)} \geq \Real(Y(y,t)) \geq \Real(y) \geq \frac{\abs{y}}{\sqrt{2}} \gtrsim \abs{y}^{1+\ep} 
\end{align}
and thus $\abs{X(x,t)} \gtrsim \abs{x}^{1+\ep}$. We define $T^* = T_5$ and the proof is complete. 
\end{enumerate}
\end{proof}
\begin{rmk}
As stated in the introduction, the assumption of $\supp\brac{\w_0} \subset \Omegaplusbar$ can be relaxed to $ \supp(\w_0) \subset \Omega_\beta = \cbrac{re^{i\theta} \in \Csp \suchthat r\geq 0 \tx{ and } 0 \leq \theta \leq \beta(\nu)\nu\pi}$ for some $\beta(\nu)>\half$. To do this, part (3) and part (4) of the above proposition need to be modified. First observe that part (4) needs very little change. Indeed the new statement would be that for $x \in \Omega_\beta\cap B_R(0)$ we have $\abs{X(x,t)} \gtrsim_{\beta} \abs{x}^{1+\ep}$. To prove this we follow the same proof as above and see that as $y = x^\frac{1}{\nu} \in \cbrac{re^{i\theta} \suchthat 0<\theta <\beta \pi}$, therefore there exists $c_\beta >0$ such that either $ c_\beta \abs{\Real(y)} \leq \Imag(y)$ or $\Imag(y) \leq \Real(y)$. In the first case, from \eqref{eq:part4int1} we see that $\abs{Y(y,t)} \gtrsim_\beta \abs{y}^{1+ \ep}$ and in the second case \eqref{eq:part4int2} gives $\abs{Y(y,t)} \gtrsim \abs{y}^{1+ \ep}$. To prove the analog of part (3), define $\tilde{\Psi}:\Omega \to \Hsp$ by $\tilde{\Psi}(z) = z^{\frac{1}{2\beta\nu}}$. For this map we have $\tilde{\Psi}(\Omega_\beta) = \Hspplus$. Let $w = \tilde{\Psi}(x)$ and define a new flow $W$ in $\tilde{\Psi}(\Omega)$ by $W(w,t) = X(x,t)^{\frac{1}{2\beta\nu}}$. Now following the same argument for $W$ instead of $Y$ gives us the new estimate. Indeed by choosing an $\ep$ small enough (depending on $b_0$ and $\beta$) and letting $R^{**} = R^{\frac{1}{2\beta \nu}}$, we see that for all $W(w,t) \in \Hspplus\cap B_{\sqrt{2}R^{**}}(0)$ we have
\begin{align*}
\frac{\diff}{\diff t}\Real(W(w,t)) & = \frac{1}{2\beta \nu^2}\abs{W(w,t)}^{4\beta(1-\nu)}\Real\brac{b(X(x,t),t) W(w,t)^{1-2\beta}} \\
& \gtrsim_\beta (b_0 - \ep)\abs{W(w,t)}^{4\beta(1-\nu) + 1 - 2\beta}.
\end{align*}
Hence for $w \in \Hspplus\cap B_{R^{**}}(0)$, we see that $W(w,t) \in \Hspplus$ for $t \in [0,T_3^*]$, for a suitably modified $T_3^*>0$. Following a similar argument as in the proposition, we also get a quantitive lower bound for $\abs{X(x,t)}$ for $x \in \Omega_\beta\cap B_R(0)$. 

\end{rmk}

We now prove that around the corner the flow moves to the right for \emph{all time} and particles in $\Omegaplus$ cannot come very close to the origin. We need this to prove uniqueness for all time in \thmref{thm:main} and not just for a short time. The following lemma is the analog of proving that $x_1(t), x_2(t) \geq c$ for $t\geq T$ in step (4) of the proof of uniqueness of the ODE \eqref{eq:sampleODE} in the introduction. Proving this lemma would be immediate by a continuity argument if we knew that $X:\Omega \times [0,\infty) \to \Omega$ extends continuously to $X:\Omegabar\times [0,\infty) \to \Omegabar$. We suspect that this is true but do not have an argument for it. As we do not know this property, the proof of the following lemma becomes a little more involved. 

\begin{lemma}\label{lem:postime}
Let $(u,\w)$ be a Yudovich weak solution in the domain $\Omega$ in the time interval $[0,\infty)$ with initial vorticity $\w_0 \in \Lone(\Omega)\cap\Linfty(\Omega)$ satisfying $\w_0 \geq 0$ and $\w_0 \not\equiv 0$. Let $X:\Omega\times[0,\infty) \to \Omega$ be the flow map of the solution and let $0<T_1<T_2$. Then there exists $c>0$ such that $\abs{X(x,t)} \geq c>0$ for all $x\in \Omegaplus$ and $t\in[T_1,T_2]$.
\begin{proof}
Put $\ep = \frac{1}{2}\min\cbrac{b_0,1}>0$ and let $R,T>0$ be as given by \propref{prop:flow}, from which in particular we get $0<R<1/10$. Let $R^* = R^{\frac{1}{\nu}} < 1$ and let $T_0 = \min\cbrac{T,T_1}>0$. Define $\delta>0$ as
\begin{align*}
\delta = \sqbrac{\frac{(2\nu-1)(b_0 - \ep)T_0}{\nu^2}}^{\frac{1}{2\nu-1}} > 0.
\end{align*}
Hence from \eqref{eq:ReYest} we see that for all $y\in \Hspplus\cap B_{R^*}(0)$ and we have
\begin{align*}
\Real\cbrac{Y(y,T_0)} \geq \delta  > 0.
\end{align*}

Now observe that $\btil:\Hspbar\times[0,T_2] \to \Csp$ is continuous from \lemref{lem:transport}. Also observe that for $y\in \Rsp$ we have from \eqref{eq:btilnew}
\begin{align*}
\btil(y,t) & =  \brac*[\Big]{\frac{i\nu^2}{2\pi}} \int_{\Hsp}\sqbrac{\frac{1}{\Real(\ybar) - \sbar} - \frac{1}{\Real(\ybar) - s}}\wtil(s,t) \abs{s}^{2\nu -2} \diff s \\
& = \frac{\nu^2}{\pi}\int_\Hsp \frac{\Imag(s)}{\abs{\Real(y) - s}^2} \wtil(s,t)\abs{s}^{2\nu -2} \diff s \\
& >0.
\end{align*}
Hence there exists $R_1> 1$ and $0<\delta_1< R^*/2$ so that $\Real(\btil(y,t)) \geq 0$ for all $y\in [-R_1,R_1]\times [0,\delta_1]$ and $t\in [0,T_2]$. Now from \lemref{lem:dist} we see that there exists  $0<r < \min\cbrac{\delta, \delta_1}$ such that for all $y\in \Hsp\cap B_r(0)$ and $t_1, t_2\in[0,T_2]$, we have $Y(Y^{-1}(y,t_1), t_2) \in [-R_1,R_1]\times (0,\delta_1]$. 

\begin{figure}[ht]
\centering
 \begin{tikzpicture}
\draw [thick] (-4.5,0) -- (9.5,0) (2.5, -1) -- (2.5,3) (-4,0.0) -- (-4,0.5) -- (9,0.5) -- (9,0);
\draw (2.5 + 2,0)  arc[radius = 2, start angle= 0, end angle= 180];
\draw (2.5 + 0.3,0)  arc[radius = 0.3, start angle= 0, end angle= 180];
\node at (9,-0.35) {$R_1$};
\node at (2.5+0.35,-0.25) {$r$};
\node at (2.5+2,-0.25) {$R^*$};
\draw [thick,  <->] (9.2,0) to (9.2,0.5) ;
\node at (9.6,0.25) {$\delta_1$};
\node at (7,2) {$r< \delta$};
\node at (7,2.5) {$2r<2\delta_1<R^*<1<R_1$};
\end{tikzpicture}
\caption{Particle flow around the boundary}\label{fig:main}
\end{figure}
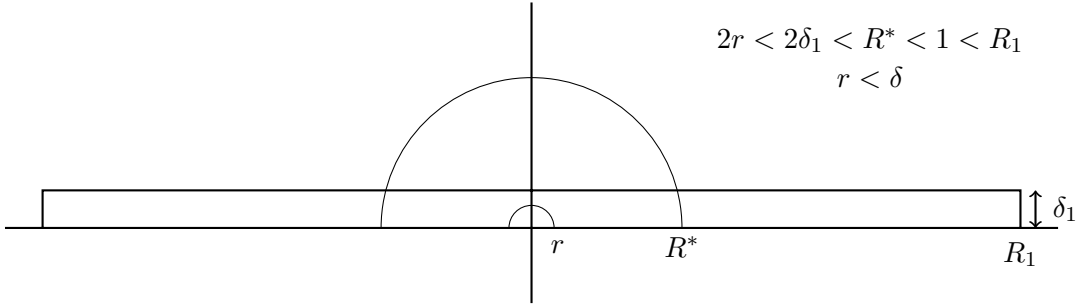

We now claim that for all $y\in \Hspplus$ and $t\in [T_0,T_2]$ we have $\abs{Y(y,t)} \geq r$. We show this via contradiction. Suppose $y_0\in \Hspplus$ and $t_0\in [T_0,T_2]$ be such that $Y(y_0,t_0) \in B_r(0)$. Then by definition of $r$ we have that $Y(y_0,t) \in [-R_1,R_1]\times (0,\delta_1]$ for all $t\in [0,T_2]$.  Hence from \eqref{eq:Ynew} we have
\begin{align*}
\frac{\diff \Real\cbrac{Y(y_0,t)}}{\diff t} = \frac{1}{\nu^2}\Real(\btil(Y(y_0,t),t))\abs{Y(y_0,t)}^{2-2\nu}  \geq 0.
\end{align*}

Now if $y_0 \in \Hspplus\cap B_{R^*}(0)$ then we see that $\Real\cbrac{Y(y_0,T_0)}\geq \delta>r$. Hence by the above estimate we have $\Real\cbrac{Y(y_0,t_0)} \geq \Real\cbrac{Y(y_0,T_0)} >r$ and hence $\abs{Y(y_0,t_0)} > r$, which is a contradiction.

 On the other hand if $y_0 \notin \Hspplus\cap B_{R^*}(0)$ but satisfies $y_0\in [-R_1,R_1]\times (0,\delta_1]$ and $y_0 \in \Hspplus$, then from the fact that $\delta_1 < R^*/2$ implies $\Real(y_0) \geq R^*/2>r$. Hence we similarly obtain $\Real\cbrac{Y(y_0,t_0)}>r$ and hence $\abs{Y(y_0,t_0)} > r$, which is a contradiction. The lemma now follows by setting $c = r^\nu$.
\end{proof}
\end{lemma}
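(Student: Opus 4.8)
The plan is to transfer the whole question to the upper half-plane via $Y(y,t)=X(x,t)^{1/\nu}$ with $y=x^{1/\nu}$, and to prove the equivalent statement that $\abs{Y(y,t)}$ is bounded below by a fixed positive constant $r$ for all $y\in\Hspplus$ and $t\in[T_1,T_2]$; the claim $\abs{X(x,t)}\geq c$ then follows with $c=r^\nu$. The two inputs are the short-time estimate of \propref{prop:flow}, which by a time $T_0=\min\{T,T_1\}$ has already pushed every particle starting in $\Hspplus\cap B_{R^*}(0)$ a definite distance $\delta>0$ to the right (this is exactly \eqref{eq:ReYest}, so $\Real(Y(y,T_0))\geq\delta$), and the sign of the drift $\btil$ on the boundary, which is where the hypothesis $\w_0\geq 0$ enters.

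The key observation is that on the real axis the drift points to the right: for $y\in\Rsp$ the kernel $\frac{1}{\ybar-\sbar}-\frac{1}{\ybar-s}$ in \eqref{eq:btilnew} reduces to a real positive multiple of $\Imag(s)\abs{\Real(y)-s}^{-2}$, so that $\btil(y,t)=\frac{\nu^2}{\pi}\int_\Hsp \frac{\Imag(s)}{\abs{\Real(y)-s}^2}\wtil(s,t)\abs{s}^{2\nu-2}\diff s$ is real and nonnegative whenever $\wtil\geq 0$. Since $\btil$ is continuous on $\Hspbar\times[0,T_2]$ by \lemref{lem:transport}, this nonnegativity persists on a thin boundary strip: I would fix $R_1>1$ and $\delta_1>0$ with $\Real(\btil(y,t))\geq 0$ for all $y$ in the rectangle $[-R_1,R_1]\times[0,\delta_1]$ and all $t\in[0,T_2]$. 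Then, as long as a trajectory $Y(y_0,\cdot)$ remains inside this rectangle, \eqref{eq:Ynew} gives $\frac{\diff}{\diff t}\Real(Y(y_0,t))\geq 0$, i.e. it cannot move left.

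The remaining difficulty is that a trajectory could in principle dip close to the corner after leaving the rectangle, so I must confine any trajectory that ever approaches the corner to the rectangle at all earlier times. This is supplied by \lemref{lem:dist}: choosing $0<r<\min\{\delta,\delta_1\}$ small enough, the two-sided control on $\Imag(Y)$ guarantees that any particle lying within distance $r$ of the origin at some time must lie in $[-R_1,R_1]\times(0,\delta_1]$ at every time in $[0,T_2]$. With this in hand I argue by contradiction: suppose $y_0\in\Hspplus$ and $t_0\in[T_0,T_2]$ satisfied $\abs{Y(y_0,t_0)}<r$. Then $Y(y_0,\cdot)$ is trapped in the rectangle on all of $[0,T_2]$, so $\Real(Y(y_0,\cdot))$ is nondecreasing. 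If $y_0\in B_{R^*}(0)$ then $\Real(Y(y_0,T_0))\geq\delta>r$ by the short-time estimate; if $y_0\notin B_{R^*}(0)$ then, since $\delta_1<R^*/2$, the trapping forces $\Real(y_0)\geq R^*/2>r$. In either case $\Real(Y(y_0,t_0))>r$, contradicting $\abs{Y(y_0,t_0)}<r$.

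The main obstacle, and the reason this is more than a one-line continuity argument, is precisely that $X$ is not known to extend continuously to $\Omegabar\times[0,\infty)$, so one cannot simply track the single particle emanating from the corner. The whole weight of the argument therefore falls on the trapping step: the sign of $\btil$ on the boundary only controls motion \emph{inside} the rectangle, and \lemref{lem:dist} is exactly what prevents a trajectory from sneaking toward the corner through the interior while evading the rectangle. Getting the constants ordered correctly—$r<\delta$, $2\delta_1<R^*$, and $r<\delta_1$—so that both cases of the contradiction close is the delicate bookkeeping, but no estimate beyond \propref{prop:flow} and \lemref{lem:dist} is needed.
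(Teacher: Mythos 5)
Your proposal is correct and follows essentially the same route as the paper's own proof: the short-time rightward displacement $\Real(Y(y,T_0))\geq\delta$ from \propref{prop:flow} via \eqref{eq:ReYest}, the positivity of $\btil$ on the real axis combined with the continuity from \lemref{lem:transport} to get $\Real(\btil)\geq 0$ on a rectangle $[-R_1,R_1]\times[0,\delta_1]$, the trapping of any near-corner trajectory inside that rectangle via \lemref{lem:dist}, and the identical two-case contradiction (with the same ordering $r<\delta$, $r<\delta_1<R^*/2$) concluding with $c=r^\nu$. The one point you attribute wholly to \lemref{lem:dist} (horizontal confinement $\abs{\Real(Y)}\leq R_1$, which also uses boundedness of the velocity) is glossed in exactly the same way in the paper, so there is no gap relative to it.
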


\section{Energy Estimate}\label{sec:energy}

In this section we will ignore the dependence of constants on $\nu$ and $\norm[\Lone\cap\Linfty]{\w_0}$. So we write $a\lesssim b$ instead of $a \lesssim_{\nu,\norm[\Lone\cap\Linfty]{\w_0}} b$.

We now consider two Yudovich weak solutions $(u_1,\w_1)$ and $(u_2,\w_2)$ in $\Omega$ in the time interval $[0,\infty)$ with the same initial vorticity $\w_0$ satisfying \eqref{assump}. Let  $X_1,X_2:\Omega\times[0,\infty) \to \Omega$ be the corresponding flows of the solutions. Let $b_1,b_2:\Omega\times [0,\infty) \to \Csp$ be the corresponding functions from \eqref{eq:Xnew},\eqref{eq:bnew} so that
\begin{align*}
\frac{\diff X_1(x,t)}{\diff t} = \frac{1}{\nu}b_1(X_1(x,t),t)\Xbar_1(x,t)^{\brac{\frac{1}{\nu} -1}} \qq \frac{\diff X_2(x,t)}{\diff t} = \frac{1}{\nu}b_2(X_2(x,t),t)\Xbar_2(x,t)^{\brac{\frac{1}{\nu} -1}}.
\end{align*}
Consider the energy
\begin{align*}
E_1(t) = \int_\Omega \abs{X_1(x,t) - X_2(x,t)} \abs{\w_0(x)} \diff x.
\end{align*}
We first prove that this energy cannot grow too fast near $t=0$. Note that if the domain is $C^{1,1}$ or if the corner has an angle of $\nu\pi$ with $0<\nu\leq \half$, then the energy  $E_1(t)$ is sufficient to prove uniqueness and one can directly show that $E_1(t) = 0$ for all $t\geq 0$ (See the uniqueness proof in Sec 2.3 of \cite{MaPu94} or simply follow the proof of the proposition below). What we show below is that even if the energy $E_1(t)$ is not sufficient to prove uniqueness in our case, we can still gain some useful information out of it. 

\begin{prop}\label{prop:Eone}
Let $(u_1,\w_1)$ and $(u_2,\w_2)$ be two Yudovich weak solutions in $\Omega$ in the time interval $[0,\infty)$ with the same initial vorticity $\w_0 \in \Lone(\Omega)\cap\Linfty(\Omega)$ satisfying $ \supp\brac{\w_0} \subset \Omegaplusbar$ and $b_0 >0$ where $b_0$ is defined in \eqref{eq:bzero}. Let $X_1,X_2:\Omega\times[0,\infty) \to \Omega$ be the corresponding flows of the solutions.  Then for any $\alpha>0$ satisfying $1<\alpha < \frac{2\nu}{2\nu-1}$ there exists constants $C,T>0$ such that for all $t\in[0,T]$ we have
\begin{align*}
E_1(t) \leq C t^\alpha.
\end{align*}
\end{prop}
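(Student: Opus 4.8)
The plan is to convert the evolution of $E_1$ into a differential inequality in which the lower bounds of \propref{prop:flow} neutralize the corner singularity, and then to integrate starting from $E_1(0)=0$. Since $X_1(x,0)=X_2(x,0)=x$ we have $E_1(0)=0$, and since $\norm[\infty]{u_i}\lesssim 1$ by \lemref{lem:velocity} we have the crude bound $E_1(t)\lesssim t$; this base estimate makes the integrals below converge at $t=0$ and anchors the argument. Throughout I would work on the ball $B_R(0)$ provided by \propref{prop:flow}, splitting the $x$-integral into the region near the corner and its complement (where, by part (2), $\abs{X_i}\gtrsim R$ and the velocity is log-Lipschitz, so that region only contributes a subordinate $\phi(E_1)$ term).

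Differentiating and using $\frac{\diff X_i}{\diff t}=u_i(X_i,t)$ together with $u_i(z,t)=\frac{1}{\nu}b_i(z,t)\zbar^{\frac{1}{\nu}-1}$ from \lemref{lem:transport}, I would write the integrand of $\frac{\diff E_1}{\diff t}\le\int_\Omega\abs{u_1(X_1,t)-u_2(X_2,t)}\abs{\w_0}\diff x$ as
\begin{align*}
u_1(X_1)-u_2(X_2)=\frac{1}{\nu}b_1(X_1)\brac{\Xbar_1^{\frac{1}{\nu}-1}-\Xbar_2^{\frac{1}{\nu}-1}}+\frac{1}{\nu}\brac{b_1(X_1)-b_2(X_2)}\Xbar_2^{\frac{1}{\nu}-1}.
\end{align*}
The first, \emph{displacement}, term is the exact analogue of the difference $x_1^{\frac{1}{\nu}-1}-x_2^{\frac{1}{\nu}-1}$ in the model ODE. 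By \lemref{lem:powers} one has $\abs{\Xbar_1^{\frac{1}{\nu}-1}-\Xbar_2^{\frac{1}{\nu}-1}}\lesssim\abs{X_1-X_2}\min\cbrac{\abs{X_1},\abs{X_2}}^{\frac{1}{\nu}-2}$, and since $\frac{1}{\nu}-2<0$ the singular factor is controlled by \propref{prop:flow}(3): for $x\in\supp(\w_0)\cap B_R(0)$ one gets $\min\cbrac{\abs{X_1},\abs{X_2}}\gtrsim t^{\frac{\nu}{2\nu-1}}$, so $\min\cbrac{\abs{X_1},\abs{X_2}}^{\frac{1}{\nu}-2}\lesssim t^{-1}$. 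Combined with $\abs{b_1(X_1)}\le b_0+\ep$ from part (1), this yields a contribution $\frac{C_1}{t}E_1(t)$ where, tracking the constants and letting $\ep\to0$, the coefficient tends to $\frac{1-\nu}{2\nu-1}$ — precisely the number that governed Step 3 of the model computation.

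The second, \emph{field}, term is the main obstacle. Writing $b_1(X_1)-b_2(X_2)=\brac{b_1(X_1)-b_1(X_2)}+\brac{b_1(X_2)-b_2(X_2)}$, the first piece is absorbed into a subordinate $\phi(E_1)$-type quantity via the modulus of continuity of $b_1$ from \lemref{lem:transport} and the concavity of $\phi$. The genuinely new piece is $b_1(X_2(x))-b_2(X_2(x))$, the difference of the two velocity fields \emph{at a common point}. Changing variables $z=X_i(y,t)$ in \eqref{eq:bnew} expresses it as a double integral of the kernel difference associated to $K_\Omega(X_2(x),X_1(y))-K_\Omega(X_2(x),X_2(y))$, which I would bound with \lemref{lem:powers} after separating the regions where $\abs{X_2(y)-X_2(x)}$ is larger or smaller than $\abs{X_1(y)-X_2(y)}$. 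The difficulty is that $K_\Omega$ in \eqref{eq:Kepzero} is far more singular at the corner than the flat Biot–Savart kernel, so a naive estimate loses; the point is again that \emph{all} interacting particles lie at distance $\gtrsim t^{\frac{\nu}{2\nu-1}}$ from the corner, so the weights $\abs{\cdot}^{\frac{1}{\nu}-1}$ attached to the two arguments of $K_\Omega$ are comparable and their dangerous powers cancel. What survives is a genuine forcing of the order of the half-plane particle-distance scale $\abs{Y_i}\sim t^{\frac{1}{2\nu-1}}$, and this irreducible corner contribution is what caps the attainable exponent.

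Assembling these bounds gives a differential inequality of the form $\frac{\diff E_1}{\diff t}\le\frac{C_1}{t}E_1(t)+C\,t^{\frac{1}{2\nu-1}}+C\phi(E_1(t))$. Multiplying by the integrating factor $t^{-C_1}$ and integrating from $0$ — legitimate since $E_1(0)=0$ and $E_1\lesssim t$ force $t^{-C_1}E_1\to0$ — the forcing exponent $\frac{1}{2\nu-1}$ exceeds $C_1-1$, so the integral converges and produces $E_1(t)\lesssim t^{\frac{1}{2\nu-1}+1}=t^{\frac{2\nu}{2\nu-1}}$; the $\phi(E_1)$ contributions are subordinate to this and do not affect the leading power (a single bootstrap using $E_1\lesssim t^{\frac{2\nu}{2\nu-1}}$ confirms this). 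In particular $E_1(t)\le Ct^\alpha$ for every $\alpha<\frac{2\nu}{2\nu-1}$ on a short interval $[0,T]$, as claimed. The hard part throughout is the sharp estimate of the field term at the corner; the displacement term and the time integration are, by comparison, a careful but routine bookkeeping of singular weights guided by \propref{prop:flow} and \lemref{lem:powers}.
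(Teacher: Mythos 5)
Your decomposition of $\frac{\diff E_1(t)}{\diff t}$ into a displacement term and a field term is exactly the paper's starting point, and your treatment of the field term via the Lagrangian change of variables is on the right track. But it contains a spurious claim: the forcing term $C\,t^{\frac{1}{2\nu-1}}$ does not exist. After changing variables $s\mapsto X_i(s,t)$ in \eqref{eq:bnew}, the difference $b_1(X_2(x,t),t)-b_2(X_2(x,t),t)$ is an integral of a kernel difference whose size is proportional to the particle displacement $\abs{X_1(s,t)-X_2(s,t)}$; by \lemref{lem:powers}, \lemref{lem:Itwo}, Fubini and Jensen's inequality the whole field term is bounded by $\phi(E_1(t))$, with no ``irreducible corner contribution''. (If such a forcing were really present, the main theorem's conclusion $E\equiv 0$ could never be reached.) The cap $\alpha<\frac{2\nu}{2\nu-1}$ has a different origin in the paper: it is the integrability threshold of a spatial weight near the corner, not a temporal forcing.

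The fatal gap is in your displacement term and the subsequent integration, which is circular. Using part (3) of \propref{prop:flow} you convert that term into $\frac{C_1}{t}E_1(t)$ and then integrate against $t^{-C_1}$ from $0$, asserting that $E_1\lesssim t$ forces $t^{-C_1}E_1\to 0$. That implication holds only if $C_1<1$, while your own coefficient $C_1\approx\frac{1-\nu}{2\nu-1}$ satisfies $C_1\geq 1$ for every $\nu\leq 2/3$ and blows up as $\nu\to\frac12^{+}$ (the sharper constant $\frac{2(1-\nu)}{2\nu-1}$ obtained in the paper's main-theorem computation is worse still). Exploiting a $\frac{1}{t}E_1$ term requires knowing a priori that $E_1=o(t^{C_1})$, which is (more than) what the proposition is supposed to prove, so for $\nu$ near $\frac12$ your scheme cannot get started. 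The paper avoids this entirely: it estimates the displacement term using part (4) of \propref{prop:flow}, i.e.\ the \emph{spatial} lower bound $\abs{X_i(x,t)}\gtrsim\abs{x}^{1+\ep}$, so that $\abs{\Xbar_1(x,t)^{\frac{1}{\nu}-1}-\Xbar_2(x,t)^{\frac{1}{\nu}-1}}\lesssim \abs{x}^{(\frac{1}{\nu}-2)(1+\ep)}\abs{X_1(x,t)-X_2(x,t)}$, and then applies H\"older's inequality with exponents $p=\frac{\alpha}{\alpha-1}$ and $q=\alpha$; the hypothesis $\alpha<\frac{2\nu}{2\nu-1}$ is precisely what makes $\abs{x}^{(\frac{1}{\nu}-2)(1+\ep)q}$ integrable on $B_R(0)$. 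This yields the self-contained inequality $\frac{\diff E_1}{\diff t}\lesssim E_1^{1/p}+\phi(E_1)\lesssim E_1^{1/p}$, which integrates from $E_1(0)=0$ unconditionally to give $E_1(t)\lesssim t^{\alpha}$. The time-weighted, $\frac{1}{t}$-coefficient argument you propose is the one the paper runs later, in the proof of \thmref{thm:main}, where it is legitimate exactly because \propref{prop:Eone} has already supplied the decay $E_1\lesssim t^{\beta}$ with $\beta>\alpha$ needed to start it.
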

\begin{proof}
We first define constants depending on $\alpha,\nu$ and on $b_0$. Define
\begin{align}\label{eq:constants}
p =   \frac{\alpha}{\alpha - 1} > 1  \quad \tx{ and }\quad  \ep = \half\min\cbrac{\frac{2\nu - \alpha(2\nu -1)}{2\alpha(2\nu -1)}, \frac{b_0(2\nu-1)}{(3-2\nu)}, 1 } >0.
\end{align}
Hence $p>2\nu > 1$ and  $0<\frac{2}{p} < \frac{1 + \ep(1-2\nu)}{\nu}$. Also $0<\frac{b_0 + \ep}{b_0 - \ep} < \frac{1}{2(1-\nu)}$ and $0<\ep<\min\cbrac{b_0, 1}$. (Here the estimate $p>2\nu$ comes due to the restriction $\alpha < \frac{2\nu}{2\nu -1}$ in the proposition). We will use these inequalities in the upcoming computation. 

Observe that as the velocity is bounded by \lemref{lem:velocity}, we see that for all $x \in \Omega$ and $t\in [0,\infty)$ we have $\abs{X_1(x,t) - X_2(x,t)}  \lesssim t$. Hence there exists $T^*_1>0$ so that for all $x \in \Omega$ and $t\in [0,T^*_1]$ we have 
\begin{align*}
\abs{X_1(x,t) - X_2(x,t)} \leq 1/10.
\end{align*}
Again using the fact that the velocity is bounded we get
\begin{align*}
 \frac{\diff E_1(t)}{\diff t} \leq \int_\Omega \abs{\frac{\diff X_1(x,t)}{\diff t} - \frac{ \diff X_2(x,t)}{\diff t}} \abs{\w_0(x)}\diff x  \lesssim 1
\end{align*}
and hence $E_1(t)  \lesssim t$. Thus there exists $T^*_2>0$ such that for all $t\in[0,T^*_2]$ we have 
\begin{align*}
E_1(t) \leq \frac{1}{10}\min\cbrac{1, \norm[1]{\w_0}}.
\end{align*}
Now using the $\ep$ from \eqref{eq:constants} in \propref{prop:flow} for the flows $X_1(\cdot,t), X_2(\cdot,t)$ we get constants $R_1,R_2$ and $T_1,T_2$ so that \propref{prop:flow} is satisfied. Let 
\begin{align}\label{eq:RT}
R = \min\cbrac{R_1,R_2} >0, \qq T = \min\cbrac{T^*_1, T^*_2, T_1,T_2}>0.
\end{align}

Now for $t\in[0,\infty)$ we have
\begin{align}\label{eq:onetwo}
\begin{split}
 \frac{\diff E_1(t)}{\diff t} & \leq \int_\Omega \abs{\frac{\diff X_1(x,t)}{\diff t} - \frac{ \diff X_2(x,t)}{\diff t}} \abs{\w_0(x)} \diff x \\
& \leq \frac{1}{\nu}\int_\Omega \abs{b_1(X_1(x,t),t)\Xbar_1(x,t)^{\frac{1}{\nu}-1} - b_2(X_2(x,t),t)\Xbar_2(x,t)^{\frac{1}{\nu}-1}} \abs{\w_0(x)} \diff x \\
& \leq  \frac{1}{\nu}\int_\Omega \abs{b_1(X_1(x,t),t)}\abs{\Xbar_1(x,t)^{\frac{1}{\nu}-1} - \Xbar_2(x,t)^{\frac{1}{\nu}-1}} \abs{\w_0(x)}\diff x \\
& \quad + \frac{1}{\nu}\int_\Omega \abs{b_1(X_1(x,t),t) - b_2(X_2(x,t),t)}\abs{X_2(x,t)}^{\frac{1}{\nu} - 1}\abs{\w_0(x)} \diff x  \\
& = \rom{1} + \rom{2}.
\end{split}
\end{align} 

\smallskip
\noindent$\mathbf{Controlling \enspace \rom{1}:}$
We first control the first term $\rom{1}$ in \eqref{eq:onetwo}.  Using \lemref{lem:transport},\lemref{lem:powers} and \propref{prop:flow} we have for all $t\in [0,T]$ 
\begingroup
\allowdisplaybreaks
\begin{align*}
\rom{1} & = \frac{1}{\nu}\int_{\Omegaplus} \abs{b_1(X_1(x,t),t)}\abs{\Xbar_1(x,t)^{\frac{1}{\nu}-1} - \Xbar_2(x,t)^{\frac{1}{\nu}-1}} \abs{\w_0(x)}\diff x \\
& \lesssim \int_{\Omegaplus\cap B_R(0)} \abs{\Xbar_1(x,t)^{\frac{1}{\nu}-1} - \Xbar_2(x,t)^{\frac{1}{\nu}-1}} \abs{\w_0(x)}\diff x \\*
& \quad +  \int_{\Omegaplus\cap B_R^c(0)} \abs{\Xbar_1(x,t)^{\frac{1}{\nu}-1} - \Xbar_2(x,t)^{\frac{1}{\nu}-1}} \abs{\w_0(x)}\diff x \\
& \lesssim  \int_{\Omegaplus\cap B_R(0)} \abs{x}^{(\frac{1}{\nu} - 2)(1+\ep)}  \abs{X_1(x,t) - X_2(x,t)}\abs{\w_0(x)} \diff x \\*
& \quad +  \int_{\Omegaplus\cap B_R(0)^c} \abs{R}^{(\frac{1}{\nu}-2)}  \abs{X_1(x,t) - X_2(x,t)}\abs{\w_0(x)} \diff x\\
& \lesssim_{R} \norm*[\big][L^p(\Omegaplus\cap B_R(0))]{\abs{X_1(x,t) - X_2(x,t)}\abs{\w_0(x)}}\norm[L^q(B_R(0))]{\abs{x}^{(\frac{1}{\nu} - 2)(1+\ep)}} \\
& \quad + E_1(t),
\end{align*}
\endgroup
where $\frac{1}{p} + \frac{1}{q} = 1$. For $\abs{x}^{(\frac{1}{\nu} - 2)(1+\ep)} \in L^q(B_R(0))$ we need
\begin{align*}
& \qq \qq \brac*[\Big]{\frac{1}{\nu} - 2}(1+\ep)q  > -2 \\
&\iff  \qq  \frac{1}{\nu} -2 + \frac{\ep(1-2\nu)}{\nu}  > -2\brac{1 - \frac{1}{p}} \\
&\iff \qq \frac{1 + \ep(1-2\nu)}{\nu}  > \frac{2}{p},
\end{align*}
which is satisfied by the choice of $p $ and $\ep$ given in \eqref{eq:constants}. Now as  $\abs{X_1(x,t) - X_2(x,t)} \leq 1/10$, $E_1(t)\leq 1/10$ and $\norm[\infty]{\w_0}<\infty$, we see that for all $t\in[0,T]$ we have
\begin{align}\label{eq:estimateone}
\rom{1}  \lesssim_{\alpha, b_0, R} E_1(t)^{\frac{1}{p}} + E_1(t)  \lesssim_{\alpha, b_0, R} E_1(t)^{\frac{1}{p}}.
\end{align}

\smallskip
\noindent $\mathbf{Controlling \enspace \rom{2}:}$ We now control the second term $\rom{2}$ in \eqref{eq:onetwo}. From the definition of $b$ in \eqref{eq:bnew} we see that
\begingroup
\allowdisplaybreaks
\begin{align*}
&  \abs{b_1(X_1(x,t),t) - b_2(X_2(x,t),t)} \\
 & \lesssim \int_\Omega \abs{\frac{1}{\Xonebar(x,t)^{\frac{1}{\nu}} - \sbar^{\frac{1}{\nu}}} - \frac{1}{\Xtwobar(x,t)^{\frac{1}{\nu}} - \sbar^{\frac{1}{\nu}}}}\abs{\w_1(s,t)} \diff s \\
& \quad + \int_\Omega \abs{\frac{1}{\Xonebar(x,t)^{\frac{1}{\nu}} - s^{\frac{1}{\nu}}} - \frac{1}{\Xtwobar(x,t)^{\frac{1}{\nu}} - s^{\frac{1}{\nu}}}}\abs{\w_1(s,t)} \diff s \\
& \quad + \abs{\int_\Omega \frac{1}{\Xtwobar(x,t)^{\frac{1}{\nu}} - \sbar^{\frac{1}{\nu}}}(\w_1(s,t) - \w_2(s,t))\diff s } \\
& \quad + \abs{\int_\Omega \frac{1}{\Xtwobar(x,t)^{\frac{1}{\nu}} - s^{\frac{1}{\nu}}}(\w_1(s,t) - \w_2(s,t))\diff s }  \\
& = \rom{2}_1 + \rom{2}_2 + \rom{2}_3 + \rom{2}_4.
\end{align*}
\endgroup
Now by \lemref{lem:powers}  we see that
\begin{align*}
&\rom{2}_1 + \rom{2}_2\\
& \lesssim \int_\Omega \frac{\abs{X_1(x,t)^{\frac{1}{\nu}} - X_2(x,t)^{\frac{1}{\nu}} }}{\abs{X_1(x,t)^{\frac{1}{\nu}} - s^{\frac{1}{\nu}} }\abs{X_2(x,t)^{\frac{1}{\nu}} - s^{\frac{1}{\nu}} } }\abs{\w_1(s,t)} \diff s \\
& \lesssim \int_{\Omega} \frac{\abs{X_1(x,t) - X_2(x,t)}\max\cbrac{\abs{X_1(x,t)}^{\frac{1}{\nu} -1}, \abs{X_2(x,t)}^{\frac{1}{\nu} - 1}} \abs{\w_1(s,t)} \diff s}{\abs{X_1(x,t) - s}\abs{X_2(x,t) - s}\max\cbrac{\abs{X_1(x,t)}^{\frac{1}{\nu} -1}, \abs{s}^{\frac{1}{\nu} - 1}}\max\cbrac{\abs{X_2(x,t)}^{\frac{1}{\nu} -1}, \abs{s}^{\frac{1}{\nu} - 1}} } \\
& \lesssim \int_\Omega \frac{\abs{X_1(x,t) - X_2(x,t)}}{\abs{X_1(x,t) - s}\abs{X_2(x,t) - s}\abs{s}^{\frac{1}{\nu} -1}} \abs{\w_1(s,t)} \diff s.
\end{align*}
If $\zone = X_1(x,t)$, $\ztwo = X_2(x,t)$ and $f = \abs{\w_1(\cdot,t)}\one_{\Omega}$ then we see that the above integral equals $ \abs{z_1 - z_2}I((0,\frac{1}{\nu}-1), (z_1,1), (z_2,1): (f,0,\infty))$ (note that $I$ is defined in \eqref{def:I}). Hence by the first estimate of \lemref{lem:Ithree} we get
\begin{align*}
\rom{2}_1 + \rom{2}_2  \lesssim  \phi\brac{\abs{X_1(x,t) - X_2(x,t)}} \min\cbrac{\abs{X_1(x,t)}^{1 - \frac{1}{\nu}}, \abs{X_2(x,t)}^{1 - \frac{1}{\nu}}}.
\end{align*}
Now let us concentrate on $ \rom{2}_3$ and $ \rom{2}_4$. As $\w_1(X_1(s,t),t) = \w_0(s)$ from \lemref{lem:transport} and as the mapping $X_1(\cdot,t)$ is measure preserving (and its inverse as well), we see from the change of variable $s \mapsto X_1(s,t)$ on the first term of $ \rom{2}_3$
\begin{align*}
\int_\Omega \frac{1}{\Xtwobar(x,t)^{\frac{1}{\nu}} - \sbar^{\frac{1}{\nu}}} \w_1(s,t) \diff s = \int_\Omega \frac{1}{\Xtwobar(x,t)^{\frac{1}{\nu}} - \Xonebar(s,t)^{\frac{1}{\nu}}} \w_0(s) \diff s.
\end{align*}
By doing a similar change of variable for the second term of $ \rom{2}_3$ as well, we see that
\begin{align*}
 \rom{2}_3 = \abs{\int_\Omega \frac{1}{\Xtwobar(x,t)^{\frac{1}{\nu}} - \Xonebar(s,t)^{\frac{1}{\nu}}} \w_0(s) \diff s - \int_\Omega \frac{1}{\Xtwobar(x,t)^{\frac{1}{\nu}} - \Xtwobar(s,t)^{\frac{1}{\nu}}} \w_0(s) \diff s}.
\end{align*}
Now by a similar computation for $ \rom{2}_4$ and by using  \lemref{lem:powers} we now see that
\begin{align*}
\rom{2}_3 + \rom{2}_4 &  \lesssim \int_\Omega \frac{\abs{X_1(s,t)^{\frac{1}{\nu}} - X_2(s,t)^{\frac{1}{\nu}} }}{\abs{X_2(x,t)^{\frac{1}{\nu}} - X_1(s,t)^{\frac{1}{\nu}} }\abs{X_2(x,t)^{\frac{1}{\nu}} - X_2(s,t)^{\frac{1}{\nu}} } }\abs{\w_0(s)} \diff s \\
& \lesssim \int_\Omega \frac{\abs{X_1(s,t) - X_2(s,t)}}{\abs{X_2(x,t) - X_1(s,t)}\abs{X_2(x,t) - X_2(s,t)}\abs{X_2(x,t)}^{\frac{1}{\nu} -1}} \abs{\w_0(s)} \diff s. 
\end{align*}
Combining all these estimates we get from \eqref{eq:onetwo} that,
\begin{align*}
 \rom{2} & = \frac{1}{\nu}\int_\Omega \abs{b_1(X_1(x,t),t) - b_2(X_2(x,t),t)}\abs{X_2(x,t)}^{\frac{1}{\nu} - 1}\abs{\w_0(x)} \diff x \\
& \lesssim  \int_\Omega \abs{\rom{2}_1 + \rom{2}_2 + \rom{2}_3 + \rom{2}_4}\abs{X_2(x,t)}^{\frac{1}{\nu} - 1}\abs{\w_0(x)} \diff x \\
& \lesssim \int_\Omega \phi\brac{\abs{X_1(x,t) - X_2(x,t)}} \abs{\w_0(x)} \diff x \\
& \quad + \int_\Omega \cbrac{ \int_\Omega \frac{\abs{X_1(s,t) - X_2(s,t)}}{\abs{X_2(x,t) - X_1(s,t)}\abs{X_2(x,t) - X_2(s,t)}} \abs{\w_0(s)} \diff s } \abs{\w_0(x)} \diff x.
\end{align*}
Now by using Fubini and using the change of variable $X_2(x,t) \mapsto x$ while observing that this is measure preserving, we obtain 
\begin{align*}
 \rom{2} & \lesssim \int_\Omega \phi\brac{\abs{X_1(x,t) - X_2(x,t)}} \abs{\w_0(x)} \diff x \\
& \quad + \int_\Omega \cbrac{\int_\Omega \frac{\abs{X_1(s,t) - X_2(s,t)}}{\abs{x - X_1(s,t)}\abs{x - X_2(s,t)}} \abs{\w_0(X_2^{-1}(x,t))} \diff x } \abs{\w_0(s)}\diff s.
\end{align*}
Now if $\zone = X_1(s,t)$, $\ztwo = X_2(s,t)$ and $f = \abs{\w_0(X_2^{-1}(\cdot,t))} \one_{\Omega}$ then we see that the inner integral in the second term equals $\abs{z_1 - z_2}I((z_1,1),(z_2,1):(f,0,\infty))$ (as defined in \eqref{def:I}). Hence using \lemref{lem:Itwo} and the fact that $\norm[\Lone\cap\Linfty]{\w_0(X_2^{-1}(\cdot,t))} = \norm[\Lone\cap\Linfty]{\w_0}$  we get
\begin{align*}
\rom{2} \lesssim \int_\Omega \phi\brac{\abs{X_1(x,t) - X_2(x,t)}} \abs{\w_0(x)} \diff x.
\end{align*}
As $\abs{X_1(x,t) - X_2(x,t)} \leq 1/10$ for all $x\in \Omega$ and $t \in [0,T]$ and as $\phi(x)$ is a concave function in $[0,1/10]$, we obtain from Jensen's inequality
\begin{align*}
\rom{2} \lesssim \norm[1]{\w_0}\phi\brac{ \frac{E_1(t)}{\norm[1]{\w_0}}}.
\end{align*}
Now as $E_1(t)\leq (1/10)\min\cbrac{1, \norm[1]{\w_0}}$ in the interval $[0,T]$, we use the formula of $\phi$ from \eqref{def:phi} to see that   
\begin{align}\label{eq:estimatetwo}
\rom{2} \lesssim E_1(t)\brac{ - \ln(E_1(t)) + \ln(\norm[1]{\w_0})}   \lesssim \phi(E_1(t)).
\end{align}
We can now use the estimates \eqref{eq:estimateone} and \eqref{eq:estimatetwo} in the equation \eqref{eq:onetwo} to see that
\begin{align*}
 \frac{\diff E_1(t)}{\diff t}   \lesssim_{\alpha, b_0, R} E_1(t)^{\frac{1}{p}}.
\end{align*}
Hence by integrating we see that there exists a $C_1>0$ such that
\begin{align*}
E_1(t)^{1- \frac{1}{p}} \leq C_1t.
\end{align*}
We get the result by  observing that $\alpha = \frac{p}{p-1}$. 
\end{proof}

We are now ready to prove our main result. 

\begin{proof}[Proof of \thmref{thm:main}]
Let $(u_1,\w_1)$ and $(u_2,\w_2)$ be two Yudovich weak solutions in $\Omega$ in the time interval $[0,\infty)$ with the same initial vorticity $\w_0$ satisfying \eqref{assump}. If $\w_0 \equiv 0$ then the result is obviously true and so we can assume that $\w_0 \not\equiv 0$, which in turn implies that $b_0 > 0$. Let $X_1,X_2:\Omega\times[0,\infty) \to \Omega$ be the corresponding flows of the solutions.

Let $ \alpha = \frac{1}{2\nu -1} $ so that $\alpha$ satisfies the conditions of \propref{prop:Eone}. Let
\begin{align*}
E(t) =  t^{-\alpha}E_1(t) =  t^{-\alpha} \int_\Omega \abs{X_1(x,t) - X_2(x,t)} \abs{\w_0(x)} \diff x.
\end{align*}
From \propref{prop:Eone} we see that $\lim_{t \to 0^+} E(t) = 0$. We now use this energy to prove uniqueness in a time interval $[0,T^*]$ for some $T^*>0$.

Let $\ep, R,T>0$ be as defined in the proof of \propref{prop:Eone} for the value of $\alpha = \frac{1}{2\nu -1}$ given in \eqref{eq:constants} and \eqref{eq:RT}. For all $t\in[0,T]$ we see that
\begin{align*}
\frac{\diff E(t)}{\diff t} = t^{-\alpha} \cbrac{\brac{\frac{-\alpha}{t}}E_1(t) + \frac{\diff E_1(t)}{\diff t} }.
\end{align*}
From the estimates \eqref{eq:onetwo}, \eqref{eq:estimatetwo} and the computation for \eqref{eq:estimateone} obtained in the proof of \propref{prop:Eone} we get
\begingroup
\allowdisplaybreaks
\begin{align*}
& \brac{\frac{-\alpha}{t}}E_1(t) + \frac{\diff E_1(t)}{\diff t} \\
& \leq \brac{\frac{-\alpha}{t}}E_1(t) +  \rom{1} + \rom{2}  \\
& \leq \cbrac{\brac{\frac{-\alpha}{t}}E_1(t) + \frac{1}{\nu}\int_{\Omegaplus\cap B_R(0)} \abs{b_1(X_1(x,t),t)}\abs{\Xbar_1(x,t)^{\frac{1}{\nu}-1} - \Xbar_2(x,t)^{\frac{1}{\nu}-1}} \abs{\w_0(x)}\diff x} \\*
& \quad + \frac{1}{\nu}\int_{\Omegaplus\cap B_R^c(0)} \abs{b_1(X_1(x,t),t)}\abs{\Xbar_1(x,t)^{\frac{1}{\nu}-1} - \Xbar_2(x,t)^{\frac{1}{\nu}-1}} \abs{\w_0(x)}\diff x + \rom{2} \\
& \lesssim_{b_0, R}   \cbrac{\brac{\frac{-\alpha}{t}}E_1(t) + \frac{1}{\nu}\int_{\Omegaplus\cap B_R(0)} \abs{b_1(X_1(x,t),t)}\abs{X_1(x,t)^{\frac{1}{\nu}-1} - X_2(x,t)^{\frac{1}{\nu}-1}} \abs{\w_0(x)}\diff x} \\*
& \quad + E_1(t) + \phi(E_1(t)).
\end{align*}
\endgroup
Now for any $z_1,z_2 \in \Hspplus$ we see that  $\min_{z \in [z_1,z_2]} \abs{z} \geq \frac{1}{\sqrt{2}}\min\cbrac{\abs{z_1}, \abs{z_2}}$. From \propref{prop:flow} part (3) we see that for $x \in \Omegaplus\cap B_R(0)$, both $X_1(x,t), X_2(x,t) \in \Omegaplus \subset \Hspplus$. Hence from \propref{prop:flow} part (3) we have
\begingroup
\allowdisplaybreaks
\begin{align*}
& \abs{X_1(x,t)^{\frac{1}{\nu}-1} - X_2(x,t)^{\frac{1}{\nu}-1}} \\
& \leq \brac{\frac{1}{\nu} - 1}\abs{X_1(x,t) - X_2(x,t)}\max_{z \in [X_1(x,t), X_2(x,t)]} \abs{z}^{\frac{1}{\nu} - 2} \\
& \leq  (\sqrt{2})^{2 - \frac{1}{\nu}}\brac{\frac{1}{\nu} - 1}\abs{X_1(x,t) - X_2(x,t)}\max\cbrac{\abs{X_1(x,t)}^{\frac{1}{\nu} - 2}, \abs{X_2(x,t)}^{\frac{1}{\nu} - 2}} \\
& \leq 2\brac{\frac{1}{\nu} - 1}\abs{X_1(x,t) - X_2(x,t)}\sqbrac{\frac{(2\nu-1)(b_0 - \ep)t}{\nu^2}}^{-1} \\
& \leq \frac{2(1-\nu)\nu}{(2\nu -1)(b_0 - \ep) t}\abs{X_1(x,t) - X_2(x,t)}.
\end{align*}
\endgroup
Hence from \propref{prop:flow} part (1) and (2) we have
\begin{align*}
& \brac{\frac{-\alpha}{t}}E_1(t) + \frac{1}{\nu}\int_{\Omegaplus\cap B_R(0)} \abs{b_1(X_1(x,t),t)}\abs{X_1(x,t)^{\frac{1}{\nu}-1} - X_2(x,t)^{\frac{1}{\nu}-1}} \abs{\w_0(x)}\diff x \\
& \leq \brac{\frac{-\alpha}{t}}\int_{\Omegaplus\cap B_R(0)} \abs{X_1(x,t) - X_2(x,t)} \abs{\w_0(x)} \diff x \\
& \quad + \frac{2(1-\nu)(b_0 + \ep)}{(2\nu -1)(b_0 - \ep) t}\int_{\Omegaplus\cap B_R(0)} \abs{X_1(x,t) - X_2(x,t)} \abs{\w_0(x)} \diff x \\
& \leq \brac{-\alpha + \frac{2(1-\nu)(b_0 + \ep)}{(2\nu -1)(b_0 - \ep) }}\frac{1}{t}\int_{\Omegaplus\cap B_R(0)} \abs{X_1(x,t) - X_2(x,t)} \abs{\w_0(x)} \diff x.
\end{align*}
This term is non-positive as $\alpha = \frac{1}{2\nu -1}$ and by the choice of $\ep$ from \eqref{eq:constants} we have $0< \frac{b_0 + \ep}{b_0 - \ep} < \frac{1}{2(1-\nu)}$. Hence for all $t\in [0,T]$ we have
\begin{align*}
\frac{\diff E(t)}{\diff t} \lesssim_{b_0, R} t^{-\alpha}\phi(E_1(t)).
\end{align*}
Now as $0\leq E_1(t)\leq 1/10$ in $t\in [0,T]$ we have
\begin{align*}
\frac{\diff E(t)}{\diff t} \lesssim_{b_0, R} -t^{-\alpha}E_1(t)\ln(E_1(t)).
\end{align*}
Now let $\beta>0$ be such that $0< \alpha < \beta < \frac{2\nu}{2\nu -1}$. From \propref{prop:Eone} we see that there exists a constant $C_2>0$ such that $E_1(t) \leq C_2t^{\beta}$ for all $t \in [0,T]$. Hence $E(t) \leq C_2t^{\beta - \alpha}$. Let $T^* = \min\cbrac{T, (10C_2)^{-\frac{1}{(\beta - \alpha)}}} >0$ and so for all $t\in [0,T^*]$ we have $0\leq E(t)\leq C_2t^{\beta - \alpha} \leq 1/10$, and thus for all $t\in [0,T^*]$ we have
\begin{align*}
\frac{\diff E(t)}{\diff t} & \lesssim_{b_0, R} -t^{-\alpha}E_1(t)\cbrac{\ln(t^{-\alpha}E_1(t)) + \ln(t^{\alpha}) } \\
&  \lesssim_{ \beta,C_2, b_0, R} \phi(E(t)) - t^{-\alpha}E_1(t)\ln(C_2t^{\beta - \alpha}) \\
&  \lesssim_{\beta,C_2, b_0, R} \phi(E(t)).
\end{align*} 
Hence by \lemref{lem:phi} we have $E(t) = 0$ for $t \in [0,T^*]$. This implies that for a.e. $x \in \supp (\w_0)$ and $t\in[0,T^*]$ we have $X_1(x,t) = X_2(x,t)$. Hence from \lemref{lem:transport} we  see that for a.e. $t\in [0,T^*]$ we have $\supp (\w_1(\cdot,t)) = \supp (\w_2(\cdot,t))$ a.e. and that for a.e. $x\in \supp (\w_1(\cdot,t))$ we have $\w_1(x,t) = \w_2(x,t)$. All in all, we have that $\w_1(x,t) = \w_2(x,t) $ for a.e. $(x,t) \in \Omega \times  [0,T^*]$ and hence   $X_1(x,t) = X_2(x,t)$ a.e. $(x,t) \in \Omega \times [0,T^*] $. 

To complete the proof we will show the uniqueness for any arbitrary large time interval $[0,T']$ where $T' > T^*$. From \lemref{lem:postime} we see that there exists $c>0$ so that  for all $x\in \Omegaplus$ and $t\in [T^*,T']$ we have $\abs{X_i(x,t)}\geq c>0$ for $i=1,2$. Thus by following the proof of \propref{prop:Eone} we see that for all $t\in [T^*,T']$
\begin{align*}
\frac{\diff E_1(t)}{\diff t} \lesssim_{c} \phi(E_1(t)).
\end{align*}
As $E_1(T^*) = 0$, we see that $E_1(t) = 0$ for all $t \in [T^*,T']$ and therefore by similar argument as above we have $\w_1(x,t) = \w_2(x,t) $ for a.e. $(x,t) \in \Omega \times  [0,T']$. Hence proved. 
\end{proof}

\section{Appendix}\label{sec:appendix}

Here we collect some basic estimates we use throughout the paper. 

\begin{lemma}\label{lem:phi}
Let $T,R,c>0$ and let $y:[0,T] \to \Rsp^{+}$ be such that $\abs{y(t)}\leq R$ for all $t\in [0,T]$ and satisfy 
\begin{align*}
\abs{\frac{\diff y}{\diff t}} \leq c\phi(y(t)) \qq\qq y(0) = y_0>0,
\end{align*}
where $\phi$ is given by \eqref{def:phi}. Then 
\begin{align*}
\cbrac{y(0)}^{e^{ct}} \lesssim_R y(t) \lesssim_R \cbrac{y(0)}^{e^{-ct}} \qq \tx{ for all } t\in[0,T].
\end{align*}
\end{lemma}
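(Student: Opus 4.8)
The plan is to recast the hypothesis as the two-sided inequality $-c\phi(y) \le y' \le c\phi(y)$ (valid since $y>0$) and to integrate it against the Osgood primitive of $1/\phi$. Since $\phi(s)=s\max\{-\ln s,1\}$ from \eqref{def:phi} is continuous and strictly positive on $(0,\infty)$, I would introduce the increasing function
\[
F(s)=\begin{cases} -\ln(-\ln s), & 0<s\le 1/e,\\[2pt] \ln s+1, & s\ge 1/e,\end{cases}
\]
which is $C^1$ and satisfies $F'(s)=1/\phi(s)$ (the two branches match to first order at $s=1/e$). Because $y$ is $C^1$ and $|y'|\le c\phi(y)$, the composition $F\circ y$ is Lipschitz with $|(F\circ y)'|=|y'|/\phi(y)\le c$ almost everywhere, so integrating yields the master estimate $|F(y(t))-F(y(0))|\le ct$ for all $t\in[0,T]$. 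The whole argument then reduces to inverting this scalar inequality through the monotone $F$.

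The key observation is an exact matching between translations of $F$ and the power maps in the statement. Writing $g(s)=-\ln(-\ln s)$ for $s\in(0,1)$ (the branch of $F$ on $(0,1/e]$), a one-line computation gives $g(s^{e^{-ct}})=g(s)+ct$ and $g(s^{e^{ct}})=g(s)-ct$. Hence, whenever the trajectory remains in the small regime $\{y\le 1/e\}$, so that $F=g$ along it, the master estimate $g(y(t))\le g(y(0))+ct=g\bigl(y(0)^{e^{-ct}}\bigr)$ together with monotonicity of $g$ gives $y(t)\le y(0)^{e^{-ct}}$, and symmetrically $y(t)\ge y(0)^{e^{ct}}$. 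In this regime the constants are exactly $1$.

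It then remains to absorb the regime where $y(0)$ or $y(t)$ exceeds $1/e$ into a constant $C_R$. I would argue by cases on the position of the endpoints $y(0),y(t)$ relative to the threshold $1/e$, each time combining the master estimate (or, on $\{y\ge 1/e\}$ where $\phi(y)=y$, the plain Gronwall bounds $y(0)e^{-ct}\le y(t)\le y(0)e^{ct}$) with the elementary facts that $y(t)\le R$ and that $y(0)^{e^{\pm ct}}$ is pinched between positive constants depending only on $R$ (and on $cT$) once $y(0)$ lies in $[1/e,R]$. For example, if $y(0)\ge 1/e$ then $y(0)^{e^{-ct}}\ge 1/e$, so the upper bound follows from $y(t)\le R\le eR\,y(0)^{e^{-ct}}$; the lower bounds in the mixed cases follow in the same spirit, first using the master estimate to show that neither $y(0)$ nor $y(t)$ can be too small along a crossing trajectory. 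Collecting the cases gives the claim with $C_R$ depending on $R$ (and on $c,T$).

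The main obstacle is exactly this bookkeeping across the corner of $\phi$ at $s=1/e$: the power-law control is sharp only in the logarithmically small regime $\{y\le 1/e\}$ and degrades near $y=1$, so a trajectory that crosses $1/e$ repeatedly must be controlled uniformly. Introducing the Osgood primitive $F$ is what sidesteps tracking the (possibly many) crossing times, reducing the problem to a single monotone inversion; the residual work is only the routine comparison of $y(0)^{e^{\pm ct}}$ against constants on the compact range $[1/e,R]$.
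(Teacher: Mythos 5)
Your proof is correct, but it takes a genuinely different route from the paper's. The paper never introduces the Osgood primitive: instead it uses the one-line observation that, on the range $0<y\leq R$, one has $\max\cbrac{-\ln y,1\,}\leq -\ln y +1+\ln(R+1)$, which turns the differential inequality into a \emph{linear} inequality for $u=\ln y$, namely $u'\leq c\brac{-u+1+\ln(R+1)}$; multiplying by the integrating factor $e^{ct}$ and integrating gives $\ln y(t)\leq e^{-ct}\ln y(0)+1+\ln(R+1)$, i.e.\ the upper bound with constant $e(R+1)$, in a few lines and with no case analysis, and the lower bound is ``proved similarly.'' Your argument instead integrates $F'=1/\phi$ exactly, which buys you two things the paper's proof does not have: the translation identity $g(s^{e^{\mp ct}})=g(s)\pm ct$ makes transparent \emph{why} the double-exponential exponents $e^{\pm ct}$ appear, and in the regime where both endpoints lie below $1/e$ you get the bounds with constant exactly $1$. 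The price is the endpoint bookkeeping across the corner of $\phi$ at $1/e$, which the paper's linearization trick avoids entirely; your sketch of those cases is sound (in the crossing cases the master estimate indeed forces $y(0)^{e^{-ct}}>1/e$, resp.\ $y(t)\geq e^{-e^{ct}}$, so everything is absorbed into the constant), though note that your constants in the lower bound and mixed cases depend on $cT$ as well as $R$ --- a dependence that is actually unavoidable for the lower bound (the paper's own ``similarly'' would produce it too, e.g.\ its constant comes out as $\sqbrac{e(R+1)}^{-(e^{cT}-1)}$), so this is a defect of the lemma's $\lesssim_R$ notation rather than of your argument.
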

\begin{proof}
We only prove $y(t) \lesssim_R \cbrac{y(0)}^{e^{-ct}}$ since the other estimate is proved similarly. We have
\begin{align*}
\frac{\diff y}{\diff t} \leq c y \max\cbrac{-\ln (y), 1} \leq c y \cbrac{-\ln(y) + 1 + \ln(R + 1)}.
\end{align*}
Therefore 
\begin{align*}
\frac{\diff \ln(y)}{\diff t}   \leq c \cbrac{-\ln(y) + 1 + \ln(R + 1)}.
\end{align*}
Now multiplying by $e^{ct}$ we obtain
\begin{align*}
\frac{\diff (e^{ct}\ln(y))}{\diff t} \leq e^{ct} c\brac{1 + \ln(R+1)}.
\end{align*}
Integrating the above inequality we get
\begin{align*}
e^{ct}\ln(y(t)) - \ln(y(0)) \leq (e^{ct} - 1)(1 + \ln(R+1)) \leq e^{ct}(1 + \ln(R+1)).
\end{align*}
Hence 
\begin{align*}
\ln(y(t)) \leq e^{-ct}\ln(y(0)) + 1 + \ln(R+1) 
\end{align*}
and so 
\begin{align*}
y(t) \lesssim_R \cbrac{y(0)}^{e^{-ct}}.
\end{align*}
\end{proof}

\begin{lemma}\label{lem:powers}
Let $\nu>0$ and let $a,b \in \Csp$ be non-zero complex numbers satisfying the condition $0\leq\arg(a),\arg(b)<\min\cbrac{\pi,\frac{\pi}{\nu}}$. Then we have
\begin{enumerate}
\item If $0<\nu<1$ then 
\begin{align*}
\quad\qquad \abs{a^\nu - b^\nu} \approx_\nu \abs{a-b}\min\cbrac{\abs{a}^{\nu-1}, \abs{b}^{\nu-1}} \approx_\nu \abs{a-b}\min\cbrac{\abs{a}^{\nu-1}, \abs{b}^{\nu-1}, \abs{a-b}^{\nu-1}}.
\end{align*}
\item If $1<\nu<\infty$ then 
\begin{align*}
\quad\qquad \abs{a^\nu - b^\nu} \approx_\nu \abs{a-b}\max\cbrac{\abs{a}^{\nu-1}, \abs{b}^{\nu-1}} \approx_\nu \abs{a-b}\max\cbrac{\abs{a}^{\nu-1}, \abs{b}^{\nu-1}, \abs{a-b}^{\nu-1}}.
\end{align*}
\end{enumerate}
\end{lemma}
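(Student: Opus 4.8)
The plan is to reduce the two-sided estimate to a one–dimensional integral along the segment $[a,b]$, and then to split the work into a \emph{modulus} estimate and a \emph{non-cancellation} estimate, the latter being exactly what the angular hypothesis is designed to supply. By positive homogeneity of degree $\nu$ in $(a,b)$ and the $a\leftrightarrow b$ symmetry of the statement, I may assume $\abs{a}\ge\abs{b}$ and set $R=\max\{\abs a,\abs b\}=\abs a$. I would first dispatch the equivalence of the two right–hand expressions on each line: for $\nu<1$, since $\abs{a-b}\le 2\abs a$ gives $\abs{a-b}^{\nu-1}\ge 2^{\nu-1}\abs a^{\nu-1}$, the three–term minimum is $\approx_\nu\min\{\abs a^{\nu-1},\abs b^{\nu-1}\}=R^{\nu-1}$, and dually for $\nu>1$ with the maximum. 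Thus it suffices to prove
\[
\abs{a^\nu-b^\nu}\approx_\nu \abs{a-b}\,R^{\nu-1}.
\]

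Next I set $M=\min\{\pi,\pi/\nu\}$ and $\gamma(t)=(1-t)b+ta$. The hypothesis places $a,b$ in the sector $\Sigma=\{z\ne0:0\le\arg z<M\}$, which is convex and contained in $\Csp\setminus(-\infty,0]$; hence $[a,b]\subset\Sigma$, the branch $z\mapsto z^\nu$ is holomorphic on a neighbourhood of $[a,b]$, and
\[
a^\nu-b^\nu=\nu(a-b)\int_0^1\gamma(t)^{\nu-1}\,dt .
\]
Since $\arg\gamma(t)\in[0,M)$ we have $\abs{\gamma(t)^{\nu-1}}=\abs{\gamma(t)}^{\nu-1}$; write $I=\int_0^1\abs{\gamma(t)}^{\nu-1}\,dt$. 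The key observation is that the arguments of the integrand, namely $(\nu-1)\arg\gamma(t)$, all lie in one interval of length $\beta=\abs{\nu-1}M<\pi$ (it equals $(1-\nu)\pi$ for $\nu<1$ and $\pi-\pi/\nu$ for $\nu>1$). Rotating by the midpoint of this interval and taking real parts yields the non-cancellation bound $\bigl\lvert\int_0^1\gamma^{\nu-1}\,dt\bigr\rvert\ge\cos(\beta/2)\,I$ with $\cos(\beta/2)>0$, while trivially $\bigl\lvert\int_0^1\gamma^{\nu-1}\bigr\rvert\le I$. Therefore $\abs{a^\nu-b^\nu}\approx_\nu\abs{a-b}\,I$, and the whole lemma reduces to the two-sided estimate $I\approx_\nu R^{\nu-1}$.

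For the estimates on $I$ I would use that $t\mapsto\abs{\gamma(t)}$ is convex, so $\abs{\gamma(t)}\le\max\{\abs a,\abs b\}=R$. When $\nu<1$ the exponent is negative, giving at once $I\ge R^{\nu-1}$ (the lower bound). When $\nu>1$ the exponent is positive, giving at once $I\le R^{\nu-1}$ (the upper bound); the matching lower bound follows by restricting to $t\in[\tfrac12,1]$, where $\abs{\gamma(t)}\ge t\abs a-(1-t)\abs b\ge(2t-1)R$, so that $I\ge R^{\nu-1}\int_{1/2}^1(2t-1)^{\nu-1}\,dt=\tfrac1{2\nu}R^{\nu-1}$. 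This closes the case $\nu>1$ entirely.

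The remaining and hardest ingredient is the upper bound $I\lesssim_\nu R^{\nu-1}$ for $\nu<1$, where $\abs{\gamma}^{\nu-1}$ develops an integrable singularity wherever the segment passes near the origin. I would prove it by an explicit scalar reduction: writing $\abs{\gamma(t)}^2=d^2+L^2(t-t_*)^2$ with $d=\mathrm{dist}(0,\text{line}(a,b))$ and $L=\abs{a-b}$, one gets $I=\tfrac1L\int_{u_0}^{u_1}(d^2+u^2)^{(\nu-1)/2}\,du$ with $u_1-u_0=L$ and $R=\sqrt{d^2+\max(u_0^2,u_1^2)}$, and then uses $(d^2+u^2)^{(\nu-1)/2}\le\min\{d^{\nu-1},\abs u^{\nu-1}\}$. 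A short case analysis, according to whether $0\in[u_0,u_1]$ and whether $d$ dominates the endpoints, bounds the integral by $C_\nu R^{\nu-1}$; the only nonobvious algebraic step is the chord inequality $\frac{u_1^\nu-u_0^\nu}{u_1-u_0}\le u_1^{\nu-1}$ for $0\le u_0<u_1$, which is immediate from $u_0^{\nu-1}\ge u_1^{\nu-1}$. I expect this case analysis to be the sole laborious part; the conceptual core—the holomorphic integral representation, the non-cancellation estimate from a sector of opening $<\pi$, and the convexity of $\abs{\gamma}$—is uniform across both regimes and does the real work.
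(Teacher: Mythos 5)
Your proof is correct, and it takes a genuinely different route from the paper's. The paper argues multiplicatively, with no integral representation: assuming $|a|\le|b|$, it splits into three cases, namely $|a|\le|b|/2$ (where both sides are comparable to $|b|^\nu$), comparable moduli with $|(a-b)/b|\le 1/2$ (binomial expansion of $(1+(a-b)/b)^\nu$), and comparable moduli with $|(a-b)/b|>1/2$, where it invokes $|(a/b)^\nu-1|\approx_\nu 1$; this last step is where the angular hypothesis enters, somewhat implicitly, since it is what keeps $(a/b)^\nu$ away from $1$ on a compact set. Your argument instead runs the fundamental theorem of calculus along the segment, $a^\nu-b^\nu=\nu(a-b)\int_0^1\gamma(t)^{\nu-1}\,dt$, and converts the angular hypothesis into two explicit facts: the sector is convex (so the segment avoids the branch cut), and the arguments $(\nu-1)\arg\gamma(t)$ fill an arc of length $\beta=|\nu-1|\min\{\pi,\pi/\nu\}<\pi$, yielding the non-cancellation bound $\bigl|\int_0^1\gamma^{\nu-1}\,dt\bigr|\ge\cos(\beta/2)\int_0^1|\gamma(t)|^{\nu-1}\,dt$. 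Everything then reduces to the scalar estimate $\int_0^1|\gamma(t)|^{\nu-1}\,dt\approx_\nu R^{\nu-1}$ with $R=\max\{|a|,|b|\}$. I checked the parts you only sketched and they do close: the equivalence of the two-term and three-term min/max, the chord inequality, and the final case analysis for $\nu<1$ (split according to whether the distance $d$ from the origin to the line exceeds $\max(|u_0|,|u_1|)$: if it does, then $R\le\sqrt{2}\,d$ and the pointwise bound $d^{\nu-1}$ suffices; if not, then $R\le\sqrt{2}\max(|u_0|,|u_1|)$ and the bound $|u|^{\nu-1}$ combined with your chord inequality, or with $L\ge\max(|u_0|,|u_1|)$ when $0\in[u_0,u_1]$, gives the result). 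Comparing the two: the paper's proof is shorter and purely algebraic, but it leaves $\nu>1$ as ``similar'' and buries the use of the sector hypothesis inside a compactness-type claim; yours is longer but makes the role of that hypothesis quantitative (an explicit constant $\cos(\beta/2)$) and treats the regimes $0<\nu<1$ and $\nu>1$ by one uniform mechanism, the only asymmetry being the one-dimensional integral bound at the end.
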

\begin{proof}
We only prove it for $0<\nu<1$ and the proof for $1<\nu<\infty$ is similar. Without loss of generality $\abs{a} \leq \abs{b}$. 
\begin{enumerate}[label=(\alph*)]
\item Case 1: $\abs{a} \leq \frac{\abs{b}}{2}$. We have
\begin{align*}
\abs{a^\nu - b^\nu} \approx_\nu \abs{b}^\nu \approx_\nu \abs{a-b}\abs{b}^{\nu-1}.
\end{align*}
In this case $\abs{b}^{\nu-1} = \min\cbrac{\abs{a}^{\nu-1}, \abs{b}^{\nu-1}} \approx_\nu \min\cbrac{\abs{a}^{\nu-1}, \abs{b}^{\nu-1}, \abs{a-b}^{\nu-1}}$.
\item Case 2: $\frac{\abs{b}}{2} \leq \abs{a} \leq \abs{b}$ and $\abs{\frac{a-b}{b}} \leq \half$. Hence we have
\begin{align*}
\abs{a^\nu - b^\nu} = \abs{b}^\nu\abs{\brac{\frac{a-b}{b} + 1}^\nu - 1}.
\end{align*}
Using the binomial theorem we see that
\begin{align*}
\abs{a^\nu - b^\nu} \approx_\nu \abs{b}^\nu\abs{\frac{a-b}{b}} = \abs{a-b}\abs{b}^{\nu -1}.
\end{align*}
In this case $\abs{b}^{\nu-1} = \min\cbrac{\abs{a}^{\nu-1}, \abs{b}^{\nu-1}} = \min\cbrac{\abs{a}^{\nu-1}, \abs{b}^{\nu-1}, \abs{a-b}^{\nu-1}}$.
\item Case 3: $\frac{\abs{b}}{2} \leq \abs{a} \leq \abs{b}$ and $\half < \abs{\frac{a-b}{b}} < 2$. Observe that
\begin{align*}
\half < \abs{\frac{a}{b} - 1} < 2 \implies \abs{\brac{\frac{a}{b}}^\nu - 1} \approx_\nu 1.
\end{align*}
Hence we have
\begin{align*}
\abs{a^\nu - b^\nu} = \abs{b}^\nu\abs{\brac{\frac{a}{b}}^\nu - 1} \approx_\nu \abs{b}^\nu \approx_\nu \abs{a-b}\abs{b}^{\nu-1}.
\end{align*}
In this case $\abs{b}^{\nu-1} = \min\cbrac{\abs{a}^{\nu-1}, \abs{b}^{\nu-1}} \approx_\nu \min\cbrac{\abs{a}^{\nu-1}, \abs{b}^{\nu-1}, \abs{a-b}^{\nu-1}}$.
\end{enumerate}
\end{proof}

\begin{lemma}\label{lem:Iest}
Let $n\geq 2$ and let $0< R\leq \infty$.  Let $z_1,\cdots, z_n \in \Csp$ be $n$ distinct complex numbers and let $f\in \Linfty(\Csp)$. Let $d_{min} = \min\cbrac{\abs{z_i - z_j}\suchthat 1\leq i,j\leq j, i\neq j} >0$  and let $0\leq r \leq d_{min}/2$.  If $\alpha_1, \cdots, \alpha_n>0$, $d_{min} = \abs{z_1 - z_2}>0$, then for $I$ defined as in \eqref{def:I} we have the following estimate
\begin{align*}
& I((z_1,\alpha_1), \cdots, (z_n,\alpha_n):(f, r,R)) \\
&  \lesssim_{\alpha_1, \cdots, \alpha_n} \norm[\infty]{f}\sum_{i=1}^n \brac{\int_r^{d_{min}/2} \frac{1}{\abs{x}^{(\alpha_i - 1)}} \diff x }\brac{\prod_{j\neq i, 1\leq j\leq n} \abs{z_j - z_i}^{-\alpha_j}} \\
& \quad  + I((z_1,\alpha_1 + \alpha_2), (z_3, \alpha_3), \cdots, (z_n,\alpha_n):(f,d_{min}/2,R)). 
\end{align*}
\end{lemma}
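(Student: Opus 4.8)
The plan is to split the region of integration $A$ according to the distance from $s$ to the nearest of the points $z_1,\dots,z_n$, using $d_{min}/2$ as the threshold. Since $d_{min}$ is the minimal pairwise distance, the open balls $B(z_i,d_{min}/2)$ are pairwise disjoint, so up to a null set $A$ decomposes into the $n$ \emph{near} annuli $A_i=\cbrac{s\in A : r\leq \abs{s-z_i}<d_{min}/2}$ together with the \emph{far} region $A_\infty=\cbrac{s\in A : \abs{s-z_j}\geq d_{min}/2 \tx{ for all } j}$. Indeed, for $s\in A$ either the minimal distance is $<d_{min}/2$, in which case the nearest center is unique (by disjointness of the balls) and $s$ lies in exactly one $A_i$, or the minimal distance is $\geq d_{min}/2$ and $s\in A_\infty$; the equidistant and threshold configurations form a null set. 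The near annuli will produce the first term of the bound and the far region the second.

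First I would bound each near piece. Fix $i$ and let $s\in A_i$. For every $j\neq i$, the triangle inequality combined with $\abs{s-z_i}<d_{min}/2\leq \abs{z_i-z_j}/2$ gives $\abs{s-z_j}\geq \abs{z_i-z_j}-\abs{s-z_i}\geq \abs{z_i-z_j}/2$, so $\abs{s-z_j}^{-\alpha_j}\leq 2^{\alpha_j}\abs{z_i-z_j}^{-\alpha_j}$. Pulling these constant factors out, enlarging the region of integration to the full annulus $\cbrac{r\leq\abs{s-z_i}\leq d_{min}/2}$ (which only increases the integral of $\abs{f}\geq0$), and using polar coordinates centered at $z_i$, namely $\int_{r\leq\abs{s-z_i}\leq d_{min}/2}\abs{s-z_i}^{-\alpha_i}\diff s=2\pi\int_r^{d_{min}/2}\abs{x}^{1-\alpha_i}\diff x$, I obtain
\begin{align*}
\int_{A_i}\frac{\abs{f(s)}}{\abs{s-z_1}^{\alpha_1}\cdots\abs{s-z_n}^{\alpha_n}}\diff s \lesssim_{\alpha_1,\dots,\alpha_n}\norm[\infty]{f}\brac{\int_r^{d_{min}/2}\frac{1}{\abs{x}^{\alpha_i-1}}\diff x}\prod_{j\neq i,\, 1\leq j\leq n}\abs{z_j-z_i}^{-\alpha_j},
\end{align*}
the factors $2\pi$ and $2^{\alpha_j}$ being absorbed into the implicit constant. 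Summing over $i=1,\dots,n$ gives exactly the first term.

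For the far region I would exploit the hypothesis $d_{min}=\abs{z_1-z_2}$. For $s\in A_\infty$ we have $\abs{s-z_2}\geq d_{min}/2$, whence $\abs{s-z_1}\leq \abs{s-z_2}+\abs{z_1-z_2}=\abs{s-z_2}+d_{min}\leq 3\abs{s-z_2}$, i.e. $\abs{s-z_2}\geq \abs{s-z_1}/3$. Consequently $\abs{s-z_1}^{-\alpha_1}\abs{s-z_2}^{-\alpha_2}\leq 3^{\alpha_2}\abs{s-z_1}^{-(\alpha_1+\alpha_2)}$, merging the first two factors. The decisive bookkeeping point is that $A_\infty$ sits inside the domain of the reduced integral: every $s\in A_\infty$ satisfies $\abs{s-z_i}\geq d_{min}/2$ and $\abs{s-z_i}\leq R$ for all $i$, hence in particular for the reduced collection $z_1,z_3,\dots,z_n$, so $A_\infty$ is contained in the set defining $I((z_1,\alpha_1+\alpha_2),(z_3,\alpha_3),\dots,(z_n,\alpha_n):(f,d_{min}/2,R))$ as in \eqref{def:I}. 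Therefore the far contribution is at most $3^{\alpha_2}$ times this reduced integral, which is the second term.

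I expect the only real subtlety—more a matter of care than a genuine obstacle—to be this region bookkeeping: verifying that the $A_i$ and $A_\infty$ cover $A$ up to a null set (which relies on the disjointness of the balls $B(z_i,d_{min}/2)$ and the uniqueness of the nearest center within each annulus) and checking the inclusion of $A_\infty$ in the domain of the reduced $I$. Degenerate cases are harmless and I would dispatch them at the start: if $R<d_{min}/2$ then the balls $B(z_i,R)$ are pairwise disjoint, so $A=\emptyset$ and the estimate is trivial, while if some $\int_r^{d_{min}/2}\abs{x}^{1-\alpha_i}\diff x$ diverges the right-hand side is infinite and there is nothing to prove.
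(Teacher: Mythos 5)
Your proposal is correct and follows essentially the same approach as the paper: split the domain into the near annuli $\{r\leq |s-z_i|<d_{min}/2\}$ (bounded exactly as you do, giving the first term) and the far region where every $|s-z_j|\geq d_{min}/2$, whose contribution is absorbed into the reduced integral. The only cosmetic difference is in the far region, where the paper first applies a weighted AM--GM splitting $|s-z_1|^{-\alpha_1}|s-z_2|^{-\alpha_2}\lesssim |s-z_1|^{-(\alpha_1+\alpha_2)}+|s-z_2|^{-(\alpha_1+\alpha_2)}$ and then folds the $z_2$-centered term back using the same triangle-inequality bound $|s-z_1|\leq 3|s-z_2|$ (valid since $d_{min}=|z_1-z_2|$) that you invoke, whereas you merge the two factors in a single step; the two arguments are identical in substance.
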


\begin{proof}
Clearly we can assume that $\norm[\infty]{f} >0$. If $r=0$ and $\max \cbrac{\alpha_1, \cdots, \alpha_n} \geq 2$ then the right hand side of the estimate is $\infty$ and there is nothing to prove. Hence we assume that either $r>0$ or that $\max \cbrac{\alpha_1, \cdots, \alpha_n} < 2$. Now  as $0\leq r\leq d_{min}/2$ we have for $1\leq i \leq n$
\begin{align*}
& \int_{B(z_i,r)^c \cap B(z_i,d_{min}/2)} \frac{1}{\abs{s-z_1}^{\alpha_1} \cdots \abs{s-z_n}^{\alpha_n}} \abs{f(s)} \diff s \\
&  \lesssim_{\alpha_1, \cdots, \alpha_n} \norm[\infty]{f} \brac{\prod_{j\neq i, 1\leq j\leq n} \abs{z_j - z_i}^{-\alpha_j}}  \int_{B(z_i,r)^c \cap B(z_i,d_{min}/2)} \frac{1}{\abs{s-z_i}^{\alpha_i} } \diff s \\
&  \lesssim_{\alpha_1, \cdots, \alpha_n} \norm[\infty]{f}\brac{\prod_{j\neq i, 1\leq j\leq n} \abs{z_j - z_i}^{-\alpha_j}} \int_r^{d_{min}/2} \frac{1}{\abs{x}^{(\alpha_i - 1)}} \diff x.
\end{align*}
Summing these up we get
\begin{align*}
& I((z_1,\alpha_1), \cdots, (z_n,\alpha_n):(f,r,R)) \\
&  \lesssim_{\alpha_1, \cdots, \alpha_n} \norm[\infty]{f}\sum_{i=1}^n \brac{\int_r^{d_{min}/2} \frac{1}{\abs{x}^{(\alpha_i - 1)}} \diff x }\brac{\prod_{j\neq i, 1\leq j\leq n} \abs{z_j - z_i}^{-\alpha_j}}  \\
& \quad +  I((z_1,\alpha_1), \cdots, (z_n,\alpha_n):(f, d_{min}/2,R)).
\end{align*}
Now by the weighted AM-GM inequality we have
\begin{align*}
\frac{1}{\abs{s-z_1}^{\alpha_1 + \alpha_2}} + \frac{1}{\abs{s-z_2}^{\alpha_1 + \alpha_2}} \gtrsim_{\alpha_1, \alpha_2} \frac{1}{\abs{s-z_1}^{\alpha_1}\abs{s- z_2}^{\alpha_2} }.
\end{align*}
Hence 
\begin{align*}
& I((z_1,\alpha_1), \cdots, (z_n,\alpha_n):(f, d_{min}/2,R)) \\
&  \lesssim_{\alpha_1, \cdots, \alpha_n} I((z_1,\alpha_1 + \alpha_2), (z_3,\alpha_3), \cdots, (z_n,\alpha_n):(f, d_{min}/2,R)) \\
& \quad + I((z_1,0),(z_2,\alpha_1 + \alpha_2), (z_3,\alpha_3), \cdots, (z_n,\alpha_n):(f, d_{min}/2,R)).
\end{align*}
Now we observe that 
\begin{align*}
\frac{1}{\abs{s-z_2}} \leq \frac{3}{\abs{s - z_1}} \quad \tx{ for all } s\in   B(z_2, \abs{z_1 - z_2}/2)^c
\end{align*}
and as $d_{min} = \abs{z_1 - z_2}$ we obtain
\begin{align*}
& I((z_1,0), (z_2,\alpha_1 + \alpha_2), (z_3,\alpha_3), \cdots, (z_n,\alpha_n):(f, d_{min}/2,R)) \\
&   \lesssim_{\alpha_1, \cdots, \alpha_n} I((z_1,\alpha_1 + \alpha_2), (z_3,\alpha_3), \cdots, (z_n,\alpha_n):(f, d_{min}/2,R)).
\end{align*}
Hence proved. 
\end{proof}

\begin{lemma}\label{lem:Itwo}
Let $z_1,z_2 \in \Csp$ be such that $z_1\neq z_2$ and let $f \in \Lone(\Csp)\cap\Linfty(\Csp)$. Then
\begin{align*}
\abs{z_1 - z_2}I((z_1,1),(z_2,1):(f,0,\infty)) \lesssim \norm[\Lone\cap\Linfty]{f}\phi(\abs{z_1-z_2}).
\end{align*}
\end{lemma}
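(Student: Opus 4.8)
The plan is to unfold the definition of $I$ and reduce the two-point kernel to a one-point kernel using \lemref{lem:Iest}. Set $d = \abs{z_1 - z_2}$, so the quantity to bound is $d\int_{\Csp}\frac{\abs{f(s)}}{\abs{s-z_1}\abs{s-z_2}}\diff s$. Applying \lemref{lem:Iest} with $n=2$, $\alpha_1 = \alpha_2 = 1$, $r = 0$ and $R = \infty$ (so that $d_{min} = d$), the sum term there contributes $\sum_{i=1}^{2}\brac{\int_0^{d/2}\diff x}\abs{z_1-z_2}^{-1} = 1$, giving
\begin{align*}
I((z_1,1),(z_2,1):(f,0,\infty)) \lesssim \norm[\infty]{f} + \int_{\abs{s-z_1}\geq d/2}\frac{\abs{f(s)}}{\abs{s-z_1}^2}\diff s.
\end{align*}
Multiplying through by $d$ and using that $x\leq\phi(x)$, the first term is controlled by $\norm[\infty]{f}\,d \leq \norm[\infty]{f}\,\phi(d)$. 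Hence the problem reduces to bounding the one-point integral $d\int_{\abs{s-z_1}\geq d/2}\abs{s-z_1}^{-2}\abs{f(s)}\diff s$ by $\norm[\Lone\cap\Linfty]{f}\,\phi(d)$.

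For this one-point integral I would split the region $\{\abs{s-z_1}\geq d/2\}$ at radius one. On the far part $\{\abs{s-z_1}> 1\}$ the kernel satisfies $\abs{s-z_1}^{-2}\leq 1$, so this piece is at most $\norm[1]{f}$, contributing $d\norm[1]{f}\leq\norm[1]{f}\phi(d)$. On the intermediate annulus $\{d/2\leq\abs{s-z_1}\leq 1\}$ (nonempty only when $d\leq 2$) I would use the $\Linfty$ bound and polar coordinates:
\begin{align*}
\int_{d/2\leq\abs{s-z_1}\leq 1}\frac{\abs{f(s)}}{\abs{s-z_1}^2}\diff s \leq \norm[\infty]{f}\int_{d/2}^{1}\frac{2\pi\rho}{\rho^2}\diff\rho = 2\pi\norm[\infty]{f}\ln\brac{\frac{2}{d}}.
\end{align*}
Multiplying by $d$ yields $\lesssim\norm[\infty]{f}\,d\ln(2/d)$, and a short check against the definition \eqref{def:phi} shows $d\ln(2/d)\lesssim\phi(d)$ for $0<d\leq 2$. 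This annulus is exactly where the logarithm in $\phi$ originates: the kernel $\abs{s-z_1}^{-2}$ produces the scale-invariant factor $\int\rho^{-1}\diff\rho = \ln(1/d)$.

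Finally, when $d> 2$ the intermediate annulus is empty and $\phi(d) = d$, so one simply bounds $\int_{\abs{s-z_1}\geq d/2}\abs{s-z_1}^{-2}\abs{f(s)}\diff s\leq (d/2)^{-2}\norm[1]{f}$, whence $d$ times this is $\lesssim d^{-1}\norm[1]{f}\leq\norm[1]{f}\lesssim\norm[1]{f}\phi(d)$. Collecting the three regimes gives the claim. The only delicate point is matching the logarithmic blow-up of the intermediate-scale integral as $d\to 0$ with the function $\phi$: one must keep the $\Linfty$ norm on the scales below one and switch to the $\Lone$ norm above scale one, and treat the large-$d$ region (where $\phi$ is merely linear) separately. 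An essentially equivalent route avoids \lemref{lem:Iest} entirely by splitting $\Csp$ into $B(z_1,d/2)$, $B(z_2,d/2)$ and their complement from the start, but the reduction above is cleaner.
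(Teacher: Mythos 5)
Your proposal is correct and takes essentially the same route as the paper: both invoke \lemref{lem:Iest} with $\alpha_1=\alpha_2=1$ to reduce to the one-point kernel $\abs{s-z_1}^{-2}$ on $\cbrac{\abs{s-z_1}\geq \abs{z_1-z_2}/2}$, and then split that integral at radius one, using $\norm[\infty]{f}$ on the intermediate annulus (the source of the logarithm in $\phi$) and $\norm[1]{f}$ beyond radius one. Your separate treatment of the case $\abs{z_1-z_2}>2$ is harmless but redundant, since your far-part bound $\abs{z_1-z_2}\norm[1]{f}\leq\norm[1]{f}\phi(\abs{z_1-z_2})$ already covers it.
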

\begin{proof}
We see from \lemref{lem:Iest} that
\begin{align*}
 I((z_1,1),(z_2,1):(f,0,\infty)) & \lesssim \norm[\infty]{f} + I((z_1,2):(f,\abs{z_1 - z_2}/2,\infty))  \\
& \lesssim \norm[\infty]{f} + I((z_1,2):(f,\abs{z_1 - z_2}/2,1)) + I((z_1,2):(f,1,\infty)) \\
& \lesssim \norm[\infty]{f}\max\cbrac{-\ln(\abs{z_1 - z_2}), 1} + \norm[1]{f} \\
&  \lesssim \norm[\Lone\cap\Linfty]{f}\max\cbrac{-\ln(\abs{z_1 - z_2}), 1}.
\end{align*}
\end{proof}

\begin{lemma}\label{lem:Ithree}
Let $z_1,z_2 \in \Csp$ be non-zero complex numbers with $z_1 \neq z_2$ and let $f \in \Lone(\Csp)\cap\Linfty(\Csp)$. If $0 < \nu < 1$ then
\begin{align*}
& \abs{z_1 - z_2}I((0,1-\nu), (z_1,1), (z_2,1): (f,0,\infty)) \\
& \lesssim_{\nu} \norm[\Lone\cap\Linfty]{f} \min\cbrac*[\big]{\abs{z_1}^{\nu-1}, \abs{z_2}^{\nu-1}} \phi(\abs{z_1-z_2}).
\end{align*}
We also have the estimate
\begin{align*}
& \abs{z_1 - z_2}I((0,1-\nu), (z_1,1), (z_2,1): (f,0,\infty)) \\
& \lesssim_{\nu} \norm[\Linfty]{f} \brac{1+ \min\cbrac*[\big]{\abs{z_1}^{\nu-1}, \abs{z_2}^{\nu-1}}} \phi(\abs{z_1-z_2}).
\end{align*}
\end{lemma}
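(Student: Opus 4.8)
The plan is to reduce the three-point integral to two- and then one-point radial integrals by repeated use of the reduction \lemref{lem:Iest}, exactly as in the proof of \lemref{lem:Itwo}. By \eqref{def:I} the quantity to bound is $\abs{z_1-z_2}\int_{\Csp}\frac{\abs{f(s)}}{\abs{s}^{1-\nu}\abs{s-z_1}\abs{s-z_2}}\diff s$, whose three point-singularities sit at $0$ (order $1-\nu<2$), $z_1$ and $z_2$ (order $1$), all integrable, with decay $\abs{s}^{-(3-\nu)}$ at infinity. Since the statement is symmetric in $z_1$ and $z_2$, I would assume $\abs{z_1}\leq\abs{z_2}$, so that $\min\cbrac{\abs{z_1}^{\nu-1},\abs{z_2}^{\nu-1}}=\abs{z_2}^{\nu-1}$ because $\nu-1<0$. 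The three pairwise distances are $\abs{z_1}$, $\abs{z_2}$ and $\abs{z_1-z_2}$, and the closest pair is always $\cbrac{0,z_1}$ or $\cbrac{z_1,z_2}$.

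I would then split according to the closest pair. If $\abs{z_1}\leq\abs{z_1-z_2}$, the triangle inequality gives $\abs{z_1-z_2}\approx\abs{z_2}$, and I apply \lemref{lem:Iest} with $\cbrac{0,z_1}$ listed first; this merges the weights at $0$ and $z_1$ into $\abs{s}^{-(2-\nu)}$ and reduces the integral to $I((0,2-\nu),(z_2,1):(f,\abs{z_1}/2,\infty))$ plus the finitely many boundary terms of \lemref{lem:Iest}, which after multiplication by $\abs{z_1-z_2}$ are all $\lesssim\norm[\infty]{f}\abs{z_2}^{\nu-1}\phi(\abs{z_1-z_2})$ thanks to $\abs{z_1}\leq\abs{z_2}$. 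If instead $\abs{z_1-z_2}<\abs{z_1}$, then $\abs{z_1}\approx\abs{z_2}$ and applying \lemref{lem:Iest} with $\cbrac{z_1,z_2}$ first reduces the integral to $I((z_1,2),(0,1-\nu):(f,\abs{z_1-z_2}/2,\infty))$. In each case one more application of \lemref{lem:Iest} (or a direct radial computation) collapses the remaining two-point integral to a tail $\int_{\abs{s-w}\geq\rho}\abs{s-w}^{-(3-\nu)}\abs{f}\diff s$ with $w\in\cbrac{0,z_1}$ and relevant scale $\rho\approx\abs{z_2}$; since $3-\nu>2$ this converges to $\lesssim\norm[\infty]{f}\rho^{\nu-1}\approx\norm[\infty]{f}\abs{z_2}^{\nu-1}$, which is exactly why the minimal power $\abs{z_2}^{\nu-1}$ survives. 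The mild origin weight $\abs{s}^{-(1-\nu)}$ is integrated directly over the near-origin shells.

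The step I expect to be most delicate is assembling the factor $\phi(\abs{z_1-z_2})$ and separating the two estimates. As in \lemref{lem:Itwo}, the annular shell around the merged point must be cut at radius $1$: on the inner shell the radial integral $\int_{\abs{z_1-z_2}/2}^{1}\frac{\diff r}{r}$ produces precisely $\max\cbrac{-\ln\abs{z_1-z_2},1}$, hence the factor $\phi(\abs{z_1-z_2})$, estimated with $\norm[\infty]{f}$. On the outer shell $\abs{s-w}>1$ the kernel and weight are bounded, so estimating with $\norm[\Lone]{f}$ gives a contribution $\lesssim\norm[\Lone]{f}\abs{z_2}^{\nu-1}\phi(\abs{z_1-z_2})$; this is the only place $f\in\Lone$ is used and it yields the sharp first estimate with $\norm[\Lone\cap\Linfty]{f}$. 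For the second estimate I would instead discard $\Lone$ and control the outer shell by the decay $\abs{s}^{-(3-\nu)}$ alone, which forces an additive constant and so produces the extra $1$ in $\brac{1+\min\cbrac{\abs{z_1}^{\nu-1},\abs{z_2}^{\nu-1}}}$. The bookkeeping that must be watched is that every boundary term from \lemref{lem:Iest} carries the correct power $\abs{z_2}^{\nu-1}$ rather than $\abs{z_1}^{\nu-1}$; this holds because the dominant region has $\abs{s}$ comparable to the larger scale $\abs{z_2}$.
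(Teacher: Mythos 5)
Your proposal is correct and follows essentially the same route as the paper's own proof: the same case split according to which pair of singularities is closest (the paper phrases this as $d_{min}=\min\cbrac{\abs{z_1},\abs{z_2}}$ versus $d_{min}=\abs{z_1-z_2}$), the same merging of the closest pair via \lemref{lem:Iest} / weighted AM--GM, and the same hand-splitting of the shell around the merged point at radius $1$ to extract $\max\cbrac{-\ln\abs{z_1-z_2},1}$ and hence $\phi(\abs{z_1-z_2})$. In particular your treatment of the outer shell matches the paper's: bound it by $\norm[1]{f}$ times the supremum of the kernel for the first estimate, and for the second estimate use only $\norm[\infty]{f}$ together with the fact that this region is nonempty only for $\abs{z_1}\geq 2$, where $\abs{z_1}^{\nu-1}\ln(\abs{z_1}/2)\lesssim_\nu 1$, which is exactly the source of the additive $1$.
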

\begin{proof}
Let $d_{min} = \min\cbrac{\abs{z_1}, \abs{z_2}, \abs{z_1 - z_2}} > 0$. We prove this in cases.

\noindent \textbf{Case 1:} $d_{min} =  \min\cbrac{\abs{z_1}, \abs{z_2}}$

Without loss of generality we can assume that $d_{min} = \abs{z_1}$. Hence $\frac{\abs{z_2}}{2}\leq \abs{z_1 - z_2} \leq 2\abs{z_2}$ and so by the weighted AM-GM inequality and \lemref{lem:Iest} we have
\begin{align*}
& I((0,1-\nu), (z_1,1), (z_2,1): (f, 0,\infty)) \\
& \lesssim_{\nu} I((0,2-\nu),  (z_2,1): (f, 0,\infty)) + I((z_1,2-\nu), (z_2,1): (f, 0,\infty)) \\
& \lesssim_{\nu} \norm[\infty]{f}\abs{z_2}^{\nu - 1} + I((0,3-\nu): (f, \abs{z_2}/2,\infty)) \\
& \quad  + \norm[\infty]{f}\abs{z_1 - z_2}^{\nu - 1} +   I((z_1,3-\nu): (f, \abs{z_1 - z_2}/2,\infty)) \\
& \lesssim_{\nu} \norm[\infty]{f}\abs{z_2}^{\nu - 1} + \norm[\infty]{f}\abs{z_1 - z_2}^{\nu - 1} \\
& \lesssim_{\nu}  \norm[\Linfty]{f} \min\cbrac{\abs{z_1}^{\nu-1}, \abs{z_2}^{\nu-1}}.
\end{align*}

\noindent \textbf{Case 2:} $d_{min} =  \abs{z_1 - z_2}$

In this case we see that $\frac{\abs{z_1}}{2} \leq \abs{z_2} \leq 2\abs{z_1}$. Hence by  \lemref{lem:Iest} we have
\begin{align*}
& I((0,1-\nu), (z_1,1), (z_2,1): (f,0,\infty)) \\
&  \lesssim_{\nu} \norm[\infty]{f}\brac{\abs{z_1 - z_2}^{\nu + 1}\abs{z_1}^{-2} + \abs{z_1}^{\nu - 1}} +  I((0,1-\nu), (z_1,2): (f, \abs{z_1 - z_2}/2,\infty)) \\
& \lesssim_\nu   \norm[\infty]{f}\abs{z_1}^{\nu-1}  +  I((0,1-\nu), (z_1,2): (f, \abs{z_1 - z_2}/2, \abs{z_1}/2)) \\
& \quad  +I((0,1-\nu), (z_1,2): (f, \abs{z_1}/2, \infty)).
\end{align*}
Now observe that from the weighted AM-GM inequality we have
\begin{align*}
& I((0,1-\nu), (z_1,2): (f, \abs{z_1}/2, \infty)) \\
& \lesssim_\nu I((0,3-\nu): (f, \abs{z_1}/2, \infty)) + I((z_1,3-\nu): (f, \abs{z_1}/2, \infty)) \\
& \lesssim_\nu  \norm[\infty]{f}\abs{z_1}^{\nu-1}.
\end{align*} 
Hence we have
\begin{align*}
& I((0,1-\nu), (z_1,1), (z_2,1): (f,0,\infty)) \\
& \lesssim_\nu   \norm[\infty]{f}\abs{z_1}^{\nu-1} +  I((0,1-\nu), (z_1,2): (f, \abs{z_1 - z_2}/2, \abs{z_1}/2)) \\
&  \lesssim_\nu    \norm[\infty]{f}\abs{z_1}^{\nu-1} + \abs{z_1}^{-2}I((0,1-\nu):(f, \abs{z_1 - z_2}/2, \abs{z_1}/2)) \\
& \quad + \abs{z_1}^{\nu-1}I((z_1,2):(f,\abs{z_1 - z_2}/2, \abs{z_1}/2)) \\
&  \lesssim_\nu    \norm[\infty]{f}\abs{z_1}^{\nu-1} +  \abs{z_1}^{\nu-1}I((z_1,2):(f,\abs{z_1 - z_2}/2, \abs{z_1}/2)) \\
&  \lesssim_\nu  \norm[\infty]{f}\abs{z_1}^{\nu-1}  + \abs{z_1}^{\nu-1}I((z_1,2):(f,\abs{z_1 - z_2}/2, 1)) + \abs{z_1}^{\nu-1}I((z_1,2):(f,1, \abs{z_1}/2))  \\
& \lesssim_{\nu}  \norm[\infty]{f} \min\cbrac{\abs{z_1}^{\nu-1}, \abs{z_2}^{\nu-1}} \max\cbrac{-\ln \abs{z_1 - z_2}, 1} + \abs{z_1}^{\nu-1}I((z_1,2):(f,1, \abs{z_1}/2)).
\end{align*}
We now easily see that
\begin{align*}
\abs{z_1}^{\nu-1}I((z_1,2):(f,1, \abs{z_1}/2)) \lesssim_{\nu} \abs{z_1}^{\nu-1}\norm[1]{f}.
\end{align*}
Now $I((z_1,2):(f,1, \abs{z_1}/2)) $ is non-zero only if $\abs{z_1}\geq 2$ and that $\abs{z_1}^{\nu-1}\ln(\abs{z_1}/2) \lesssim_\nu 1$ if $\abs{z_1} \geq 2$. Therefore
\begin{align*}
\abs{z_1}^{\nu-1}I((z_1,2):(f,1, \abs{z_1}/2)) \lesssim_{\nu} \norm[\infty]{f}.
\end{align*}
Hence proved. 
\end{proof}


\bibliographystyle{amsplain}
\bibliography{Mainv2.bib}

\providecommand{\bysame}{\leavevmode\hbox to3em{\hrulefill}\thinspace}
\providecommand{\MR}{\relax\ifhmode\unskip\space\fi MR }
\providecommand{\MRhref}[2]{%
  \href{http://www.ams.org/mathscinet-getitem?mr=#1}{#2}
}
\providecommand{\href}[2]{#2}
\begin{thebibliography}{10}

\bibitem{AlCrMa19}
Giovanni Alberti, Gianluca Crippa, and Anna~L. Mazzucato, \emph{Loss of
  regularity for the continuity equation with non-{L}ipschitz velocity field},
  Ann. PDE \textbf{5} (2019), no.~1, Paper No. 9, 19. \MR{3933614}

\bibitem{Ba72}
C.~Bardos, \emph{Existence et unicit\'{e} de la solution de l'\'{e}quation
  d'{E}uler en dimension deux}, J. Math. Anal. Appl. \textbf{40} (1972),
  769--790. \MR{333488}

\bibitem{BaDiTe13}
Claude Bardos, Francesco Di~Plinio, and Roger Temam, \emph{The {E}uler
  equations in planar nonsmooth convex domains}, J. Math. Anal. Appl.
  \textbf{407} (2013), no.~1, 69--89. \MR{3063105}

\bibitem{BrMu20}
Alberto Bressan and Ryan Murray, \emph{On self-similar solutions to the
  incompressible {E}uler equations}, J. Differential Equations \textbf{269}
  (2020), no.~6, 5142--5203. \MR{4104468}

\bibitem{BrSh21}
Alberto Bressan and Wen Shen, \emph{A posteriori error estimates for
  self-similar solutions to the {E}uler equations}, Discrete Contin. Dyn. Syst.
  \textbf{41} (2021), no.~1, 113--130. \MR{4182316}

\bibitem{De91}
Jean-Marc Delort, \emph{Existence de nappes de tourbillon en dimension deux},
  J. Amer. Math. Soc. \textbf{4} (1991), no.~3, 553--586. \MR{1102579}

\bibitem{DiTe15}
Francesco Di~Plinio and Roger Temam, \emph{Grisvard's shift theorem near
  {$L^\infty$} and {Y}udovich theory on polygonal domains}, SIAM J. Math. Anal.
  \textbf{47} (2015), no.~1, 159--178. \MR{3296605}

\bibitem{DiMa87}
Ronald~J. DiPerna and Andrew~J. Majda, \emph{Concentrations in regularizations
  for {$2$}-{D} incompressible flow}, Comm. Pure Appl. Math. \textbf{40}
  (1987), no.~3, 301--345. \MR{882068}

\bibitem{GeLa13}
David G\'{e}rard-Varet and Christophe Lacave, \emph{The two-dimensional {E}uler
  equations on singular domains}, Arch. Ration. Mech. Anal. \textbf{209}
  (2013), no.~1, 131--170. \MR{3054600}

\bibitem{GeLa15}
\bysame, \emph{The two dimensional {E}uler equations on singular exterior
  domains}, Arch. Ration. Mech. Anal. \textbf{218} (2015), no.~3, 1609--1631.
  \MR{3401016}

\bibitem{HaZl21}
Zonglin Han and Andrej Zlato\v{s}, \emph{Euler equations on general planar
  domains}, Ann. PDE \textbf{7} (2021), no.~2, Paper No. 20, 31. \MR{4304313}

\bibitem{Ho33}
Ernst H\"{o}lder, \emph{\"{U}ber die unbeschr\"{a}nkte {F}ortsetzbarkeit einer
  stetigen ebenen {B}ewegung in einer unbegrenzten inkompressiblen
  {F}l\"{u}ssigkeit}, Math. Z. \textbf{37} (1933), no.~1, 727--738.
  \MR{1545431}

\bibitem{ItMiYo16}
Tsubasa Itoh, Hideyuki Miura, and Tsuyoshi Yoneda, \emph{The growth of the
  vorticity gradient for the two-dimensional euler flows on domains with
  corners}, Preprint (2016), arXiv:1602.00815.

\bibitem{JeKe95}
David Jerison and Carlos~E. Kenig, \emph{The inhomogeneous {D}irichlet problem
  in {L}ipschitz domains}, J. Funct. Anal. \textbf{130} (1995), no.~1,
  161--219. \MR{1331981}

\bibitem{Yu63}
V.~I. Judovi\v{c}, \emph{Non-stationary flows of an ideal incompressible
  fluid}, \v{Z}. Vy\v{c}isl. Mat. i Mat. Fiz. \textbf{3} (1963), 1032--1066.
  \MR{0158189}

\bibitem{Ki83}
Keisuke Kikuchi, \emph{Exterior problem for the two-dimensional {E}uler
  equation}, J. Fac. Sci. Univ. Tokyo Sect. IA Math. \textbf{30} (1983), no.~1,
  63--92. \MR{700596}

\bibitem{KiZl15}
Alexander Kiselev and Andrej Zlato\v{s}, \emph{Blow up for the 2{D} {E}uler
  equation on some bounded domains}, J. Differential Equations \textbf{259}
  (2015), no.~7, 3490--3494. \MR{3360679}

\bibitem{La15}
Christophe Lacave, \emph{Uniqueness for two-dimensional incompressible ideal
  flow on singular domains}, SIAM J. Math. Anal. \textbf{47} (2015), no.~2,
  1615--1664. \MR{3341132}

\bibitem{LaMiWa14}
Christophe Lacave, Evelyne Miot, and Chao Wang, \emph{Uniqueness for the
  two-dimensional {E}uler equations on domains with corners}, Indiana Univ.
  Math. J. \textbf{63} (2014), no.~6, 1725--1756. \MR{3298720}

\bibitem{LaZl19}
Christophe Lacave and Andrej Zlato\v{s}, \emph{The {E}uler equations in planar
  domains with corners}, Arch. Ration. Mech. Anal. \textbf{234} (2019), no.~1,
  57--79. \MR{3981393}

\bibitem{MaBe02}
Andrew~J. Majda and Andrea~L. Bertozzi, \emph{Vorticity and incompressible
  flow}, Cambridge Texts in Applied Mathematics, vol.~27, Cambridge University
  Press, Cambridge, 2002. \MR{1867882}

\bibitem{MaPu94}
Carlo Marchioro and Mario Pulvirenti, \emph{Mathematical theory of
  incompressible nonviscous fluids}, Applied Mathematical Sciences, vol.~96,
  Springer-Verlag, New York, 1994. \MR{1245492}

\bibitem{Mc67}
F.~J. McGrath, \emph{Nonstationary plane flow of viscous and ideal fluids},
  Arch. Rational Mech. Anal. \textbf{27} (1967), 329--348. \MR{221818}

\bibitem{Ta00}
Michael~E. Taylor, \emph{Incompressible fluid flows on rough domains},
  Semigroups of operators: theory and applications ({N}ewport {B}each, {CA},
  1998), Progr. Nonlinear Differential Equations Appl., vol.~42,
  Birkh\"{a}user, Basel, 2000, pp.~320--334. \MR{1788895}

\bibitem{Te75}
Roger Temam, \emph{On the {E}uler equations of incompressible perfect fluids},
  J. Functional Analysis \textbf{20} (1975), no.~1, 32--43. \MR{0430568}

\bibitem{Vi18a}
Misha Vishik, \emph{Instability and non-uniqueness in the cauchy problem for
  the euler equations of an ideal incompressible fluid. part {I}}, Preprint
  (2018), arXiv:1805.09426.

\bibitem{Vi18b}
\bysame, \emph{Instability and non-uniqueness in the cauchy problem for the
  euler equations of an ideal incompressible fluid. part {II}}, Preprint
  (2018), arXiv:1805.09440.

\bibitem{Wo33}
W.~Wolibner, \emph{Un theor\`eme sur l'existence du mouvement plan d'un fluide
  parfait, homog\`ene, incompressible, pendant un temps infiniment long}, Math.
  Z. \textbf{37} (1933), no.~1, 698--726. \MR{1545430}

\end{thebibliography}

\end{document}